\newcommand{\defterm}[1]{\emph{#1}}
\numberwithin{equation}{section}
\theoremstyle{plain}
	\newtheorem{thm}[equation]{Theorem}
	\newtheorem*{thm*}{Theorem}
  	\newtheorem*{thma*}{Theorem A}
	\newtheorem*{thmb*}{Theorem B}
	\newtheorem*{thmc*}{Theorem C}
	\newtheorem{cor}[equation]{Corollary}
	\newtheorem*{cor*}{Corollary}
	\newtheorem{prop}[equation]{Proposition}
	\newtheorem*{prop*}{Proposition}
	\newtheorem{lem}[equation]{Lemma}
	\newtheorem*{lem*}{Lemma}
	\newtheorem*{ex*}{Exercise}
	\newtheorem*{claim*}{Claim}
	\newtheorem*{question*}{Question}
	\newtheorem*{fact*}{Fact}
	\newtheorem*{conj*}{Conjecture}
\theoremstyle{definition}
	\newtheorem{Def}[equation]{Definition}
	\newtheorem*{Def*}{Definition}
	\newtheorem{obs}[equation]{Observation}
	\newtheorem*{obs*}{Observation}
	\newtheorem{rmk}[equation]{Remark}
	\newtheorem*{rmk*}{Remark}
	\newtheorem{soln*}{Solution}
	\newtheorem*{note*}{Note}
	\newtheorem{eg}[equation]{Example}
	\newtheorem*{eg*}{Example}	
	\newtheorem*{construction*}{Construction}
	\newtheorem*{warning*}{Warning}
\xpatchcmd{\paragraph}{\normalfont}{{\normalfont\bfseries}}{}{}
\newcommand{\ints}{\mathbb{Z}}
\newcommand{\nats}{\mathbb{N}}
\newcommand{\id}{\mathrm{id}}
\newcommand{\Hom}{\mathrm{Hom}}
\newcommand{\Ob}{\operatorname{Ob}}
\newcommand{\Mor}{\operatorname{Mor}}
\newcommand{\const}{\mathrm{const}}
\newcommand{\Psh}{\mathsf{Psh}}
\newcommand{\Fun}{\mathrm{Fun}}
\newcommand{\Set}{\mathsf{Set}}
\newcommand{\Cat}{\mathsf{Cat}}
\newcommand{\calC}{\mathcal{C}}
\newcommand{\calE}{\mathcal{E}}
\newcommand{\calK}{\mathcal{K}}
\newcommand{\calO}{\mathcal{O}}
\newcommand{\calS}{\mathcal{S}}
\newcommand{\Gaunt}{\mathsf{Gaunt}}
\newcommand{\sCat}{\mathsf{sCat}}
\newcommand{\globe}{\mathbb{G}}
\newcommand{\plain}{\mathrm{plain}}
\newcommand{\semi}{\mathrm{semi}}
\newcommand{\TPar}{\mathcal{S}}
\newcommand{\inc}{\mathrm{f}}
\newcommand{\nerve}{\operatorname{N}}
\newcommand{\lax}{\mathrm{lax}}
\newcommand{\strong}{\mathrm{strong}}
\newcommand{\cocts}{\mathrm{cocts}}
\newcommand{\funny}{\mathrm{funny}}
\newcommand{\Lev}{\mathsf{Lev}}
\newcommand{\Levw}{\mathsf{Lev}_\infty}
\newcommand{\natsw}{\mathbb{N}_\infty}
\newcommand{\Alg}{\operatorname{Alg}}
\newcommand{\coind}{\mathrm{coind}}
\DeclareFontFamily{U}{min}{}
\DeclareFontShape{U}{min}{m}{n}{<-> udmj30}{}
\newcommand\yo{\!\text{\usefont{U}{min}{m}{n}\symbol{'207}}\!}
\title{The Gray tensor product of $(\infty,n)$-categories}
\author{Timothy Campion}
\date{October 2023}
\begin{document}

\begin{abstract}
    In this note, we leverage the author's pasting theorem for $(\infty,n)$-categories to construct new models of $(\infty,n)$-categories for all $n \leq \infty$, as presheaves on certain categories of computads. Among these new models are some which facilitate a construction of the Gray tensor product of $(\infty,n)$-categories Day convolution and reflection, which we carry out here. After constructing this Gray tensor product of $(\infty,n)$-categories, we characterize it via several model-independent universal properties.
\end{abstract}

\maketitle

\setcounter{tocdepth}{1}
\tableofcontents


\section{Introduction}
In this note, we apply the pasting theorem of \cite{campion-paste} to construct new models of $(\infty,n)$-categories for all $n \leq \infty$, as presheaves of spaces on certain categories of computads, and to describe the resulting localizations quite explicitly. This work should be compared to various presheaf models of $(\infty,n)$-categories found in \cite{bsp}, or in \cite{henry-regular}. As an application, the good control over these localizations allows us to construct the Gray tensor product of $(\infty,n)$-categories via Day convolution and reflection quite straightforwardly, as a direct application of results from the strict $n$-categorical literature (\cite{ara-lucas}). We then characterize the resulting Gray tensor product via several model-independent universal properties (which in particular do not refer to the site we used to construct the Gray tensor product).

\subsection*{Context} The Gray tensor product of strict 2-categories was introduced in \cite{Gray} in order to study lax natural transformations. It is a monoidal biclosed structure $\otimes$ on the 1-category $\sCat_2$ of strict 2-categories, such that the left (resp. right) internal hom $\llbracket A, B\rrbracket$ has strict 2-functors $A \to B$ for objects, lax (resp. oplax) natural transformations for 1-morphisms, and modifications for 2-morphisms. The unit of the monoidal structure is the terminal category $\square^0$, and $\Ob(A \otimes B) = \Ob A \times \Ob B$. The first few Gray tensor powers of the arrow category $[1]$ are pictured in \cref{cubes-0}. Gray's construction was extended to the category $\sCat_\omega$ of strict $\omega$-categories using pasting schemes \cite{Crans:thesis}, cubical $\omega$-categories \cite{al-Agl;Brown;Steiner:Multiple} and augmented directed complexes \cite{steiner}. It was shown in \cite{ara-lucas} that the ``folk" model structure on strict $\omega$-categories \cite{lmw} is monoidal for the Gray tensor product. In this setting, the power $[1]^{\otimes n}$ is a lax $n$-dimensional cube; these are the \defterm{Gray cubes} $\square^n =  [1]^{\otimes n}$, and the collection thereof is the \defterm{Gray cube category} $\square = \{\square^n \mid n \in \nats\}$, forming a monoidal subcategory of the 1-category $\sCat_\omega$ of strict $\omega$-categories.

\begin{figure}
\[
\begin{tikzpicture}[scale = 1.5, baseline = -2]
	\filldraw
	(0,0) circle [radius = 1pt]
	(1,0) circle [radius = 1pt];
	
	\draw[->] (0.1,0) -- (0.9,0);

\end{tikzpicture} \hspace{50pt}
\begin{tikzpicture}[scale = 1.5, baseline = 18.5]
	\filldraw
	(0,0) circle [radius = 1pt]
	(1,0) circle [radius = 1pt]
	(0,1) circle [radius = 1pt]
	(1,1) circle [radius = 1pt];
	
	\draw[->] (0.1,0) -- (0.9,0);
	\draw[->] (0.1,1) -- (0.9,1);
	\draw[<-] (0,0.1) -- (0,0.9);
	\draw[<-] (1,0.1) -- (1,0.9);
	
	\draw[->, double] (0.7,0.7) -- (0.3,0.3);
\end{tikzpicture} \hspace{50pt}
\begin{tikzpicture}[baseline = -2,scale = 1.5]	
	\filldraw
	(150:1) circle [radius = 1pt]
	(90:1) circle [radius = 1pt]
	(30:1) circle [radius = 1pt]
	(-30:1) circle [radius = 1pt]
	(-90:1) circle [radius = 1pt]
	(-150:1) circle [radius = 1pt]
	(0,0) circle [radius = 1pt];
	
	\draw[->] (150:1) + (30:0.1) --+ (30:0.9);
	\draw[->] (90:1) + (-30:0.1) --+ (-30:0.9);
	\draw[->] (30:1) + (-90:0.1) --+ (-90:0.9);
	\draw[->] (150:1) + (-90:0.1) --+ (-90:0.9);
	\draw[->] (-150:1) + (-30:0.1) --+ (-30:0.9);
	\draw[->] (-90:1) + (30:0.1) --+ (30:0.9);

	\draw[->] (150:1) + (-30:0.1) --+ (-30:0.9);
	\draw[->] (0:0) + (-90:0.1) --+ (-90:0.9);
	\draw[->] (0:0) + (30:0.1) --+ (30:0.9);
	
	\draw[<-,double] (-150:0.5) + (-0.15,-0.15) --+ (0.15,0.15);
	\draw[<-,double] (90:0.5) + (-0.15,-0.15) --+ (0.15,0.15);
	\draw[<-,double] (-30:0.5) + (-0.15,-0.15) --+ (0.15,0.15);
\end{tikzpicture}
\quad \Rrightarrow \quad
\begin{tikzpicture}[baseline = -2,scale = 1.5]
	\filldraw
	(150:1) circle [radius = 1pt]
	(90:1) circle [radius = 1pt]
	(30:1) circle [radius = 1pt]
	(-30:1) circle [radius = 1pt]
	(-90:1) circle [radius = 1pt]
	(-150:1) circle [radius = 1pt]
	(0,0) circle [radius = 1pt];
	
	\draw[->] (150:1) + (30:0.1) --+ (30:0.9);
	\draw[->] (90:1) + (-30:0.1) --+ (-30:0.9);
	\draw[->] (30:1) + (-90:0.1) --+ (-90:0.9);
	\draw[->] (150:1) + (-90:0.1) --+ (-90:0.9);
	\draw[->] (-150:1) + (-30:0.1) --+ (-30:0.9);
	\draw[->] (-90:1) + (30:0.1) --+ (30:0.9);

	\draw[->] (0:0) + (-30:0.1) --+ (-30:0.9);
	\draw[->] (90:1) + (-90:0.1) --+ (-90:0.9);
	\draw[->] (-150:1) + (30:0.1) --+ (30:0.9);
	
	\draw[<-,double] (150:0.5) + (-0.15,-0.15) --+ (0.15,0.15);
	\draw[<-,double] (-90:0.5) + (-0.15,-0.15) --+ (0.15,0.15);
	\draw[<-,double] (30:0.5) + (-0.15,-0.15) --+ (0.15,0.15);
\end{tikzpicture}
\]
\caption{The first few Gray cubes: $\square^1 = [1]$, $\square^2 = [1] \otimes [1]$, and $\square^3 = [1] \otimes [1] \otimes [1]$}\label{cubes-0}
\end{figure}
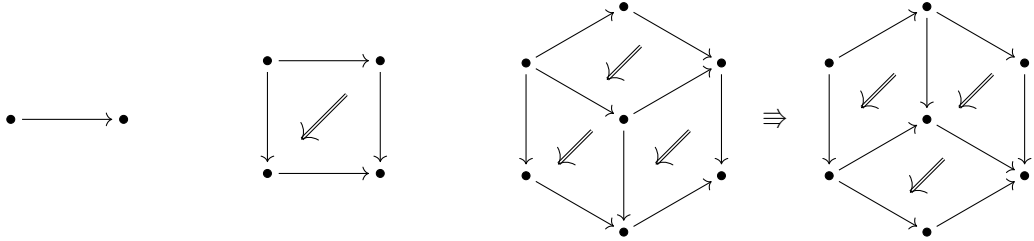

Several authors have constructed related monoidal structures on the $\infty$-category $\Cat_n$ of (weak) $(\infty,n)$-categories for general $n \in \nats \cup \{\omega\}$. We refer to the introduction of \cite{campion-maehara} for a more comprehensive discussion, but we mention here that the complicial model \cite{Verity:I} and comical models (\cite{Campion;Kapulkin;Maehara}, \cite{Doherty;Kapulkin;Maehara}) have ``Gray tensor products," which agree (at least at the level of bifunctors) under the comparison of \cite{Doherty;Kapulkin;Maehara}. Thanks to the work of \cite{Loubaton} (\cite{Gagna;Harpaz;Lanari:Equivalence} in dimension 2) and \cite{Doherty;Kapulkin;Maehara} these models have moreover been compared to models which are universal in the sense of \cite{bsp}. The relevant combinatorics were also studied in detail by \cite{Johnson-Freyd;Scheimbauer} in iterated complete Segal spaces, but that work has not been compared to other models of the Gray tensor product. There has also been much interesting work specific to dimension 2 -- see e.g. \cite{Gaitsgory;Rozenblyum} \cite{Maehara:Gray} \cite{Gagna;Harpaz;Lanari:Gray} \cite{campion-maehara}. The present work is similar in spirit to \cite{campion-maehara}.

\subsection*{Outlook} However, much remains to be understood about the Gray tensor product of $(\infty,n)$-categories. Currently, for $n \geq 3$ the only constructions in the literature are in the complicial and comical models. These can be transferred to other models using the comparison of \cite{Loubaton}, but it would be preferable to have a construction living more natively to these other models. Moreover, the justification for even referring to this ``Gray tensor product" as ``the Gray tensor product" is a bit tenuous. For instance, there is no known comparison between this ``Gray tensor product" and the usual Gray tensor product of strict $n$-categories. In dimension 2, these shortcomings were addressed in \cite{Maehara:Gray} by constructing a ``Gray tensor product" by extending up from the usual Gray tensor product on a class of strict 2-categories. In \cite{campion-maehara} it was shown that this ``Gray tensor product" descends from the point-set-level to $\Cat_2$ and enjoys a couple of related universal properties. For one, it is the unique monoidal biclosed structure extended from the usual Gray tensor product of Gray cubes. For another, a cocontinuous, strong monoidal functor out of $\Cat_2$ is the same as a cocontinuous functor out of $\Cat_2$ plus a strong monoidal structure on its restriction to the Gray cubes. These universal properties make no mention of the particular construction of the Gray tensor product in \cite{Maehara:Gray}, but their proof is facilitated by the knowledge, provided by \cite{Maehara:Gray}, that this ``Gray tensor product" can be constructed as the Day reflection of the Day convolution of the usual Gray tensor product on the Gray cubes.

\subsection*{Present work} In this note, we carry out a similar program to \cite{campion-maehara} in arbitrary dimension, constructing the Gray tensor product and then giving a characterization which abstracts from the construction.
We show:

\begin{thma*}[See \cref{cor:gray-reflect} and \cref{cor:gray-unique1}]
Let $n \in \nats \cup \{\omega\}$. There is a unique monoidal biclosed structure on $\Cat_n$ such that the canonical functor $\square \subset \sCat_\omega \to \Cat_\omega \to \Cat_n$ is strong monoidal.
\end{thma*}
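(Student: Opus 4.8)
The plan is to prove existence and uniqueness separately, following the two-step template of \cite{campion-maehara}: construct the tensor product by Day convolution and a monoidal reflection, then pin it down by a colimit-density argument. For existence, I would work in the presheaf model $\Cat_n\simeq L\Psh(\calC)$ supplied by the pasting theorem, with $\calC$ the relevant category of computads, and begin by observing that $\calC$ — which contains the Gray cube category $\square$ as a monoidal subcategory — is itself a monoidal subcategory of $(\sCat_\omega,\otimes)$: the Gray tensor product of two of the computads in play is again such a computad, a fact one reads off from the strict $\omega$-categorical results of \cite{ara-lucas}. Day convolution then equips $\Psh(\calC)$ with a monoidal structure that is cocontinuous in each variable (hence biclosed, $\Psh(\calC)$ being presentable) and under which the Yoneda embedding is strong monoidal. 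The crux is that the reflective localization $L\colon\Psh(\calC)\to\Cat_n$ is \emph{monoidal} — equivalently, that the Day-convolution internal hom out of an arbitrary presheaf into a local object stays local, equivalently that Day-tensoring an $L$-local equivalence with a representable is again an $L$-local equivalence. This is exactly where the explicit description of the localizing maps is used: one verifies the condition on each family of generating local equivalences (the Segal maps, the globularity / cell-collapse maps, and the completeness maps), importing from \cite{ara-lucas} the Gray-monoidality of the folk model structure on $\sCat_\omega$ — so that the Gray tensor of a generating folk trivial cofibration with a Gray cube is again a folk trivial cofibration — and transporting that statement across the nerve by means of the pasting theorem of \cite{campion-paste}. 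Given this, $\Cat_n$ inherits a monoidal biclosed structure $\otimes$ (the Day reflection), with $L$ strong monoidal and with internal homs computed by restricting those of $\Psh(\calC)$; the composite $\square\hookrightarrow\calC\hookrightarrow\Psh(\calC)\xrightarrow{L}\Cat_n$ is then strong monoidal, and is identified with the canonical functor $\square\subset\sCat_\omega\to\Cat_\omega\to\Cat_n$. This is \cref{cor:gray-reflect}.

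For uniqueness, let $\otimes'$ be any monoidal biclosed structure on $\Cat_n$ for which $\square\to\Cat_n$ is strong monoidal; being biclosed, $\otimes'$ preserves colimits in each variable, as does the Day-reflected $\otimes$. The essential input is that $\square$ generates $\Cat_n$ under colimits — every computad is a pasting colimit of its cells by \cite{campion-paste}, and each cell is in turn a colimit of Gray cubes (for example the $2$-globe is the pushout obtained from $\square^2$ by collapsing two of its edges, along $\square^1\rightrightarrows\square^0$) — so that the canonical cocontinuous functor $q\colon\Psh(\square)\to\Cat_n$ is essentially surjective. Then both $(F,G)\mapsto q(F)\otimes' q(G)$ and $(F,G)\mapsto q(F)\otimes q(G)$ are functors $\Psh(\square)^2\to\Cat_n$ cocontinuous in each variable whose restriction to $\square\times\square$ is the Gray tensor of Gray cubes (by strong monoidality of $\square\to\Cat_n$, respectively by construction of $\otimes$); so by the universal property of Day convolution they coincide, and since $q$ is essentially surjective and cocontinuous this forces $\otimes'\simeq\otimes$ on $\Cat_n$. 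The same circle of ideas — $\Psh(\square)$ with Day convolution being the free presentably monoidal $\infty$-category on $\square$, and $\Cat_n$ a monoidal (reflective) localization of it — promotes the comparison of underlying bifunctors to an equivalence of monoidal structures, associativity and unit data included; in particular the units agree, strong monoidality forcing both to be the terminal object. This is \cref{cor:gray-unique1}, and combined with the existence paragraph it proves the theorem.

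I expect the main obstacle to be the monoidality of $L$ in the existence step — the verification, using the explicit generators of the localization, that Day-tensoring a local equivalence with a representable remains a local equivalence; this is precisely the step consuming the Gray-monoidality of the folk model structure \cite{ara-lucas} and the pasting theorem of \cite{campion-paste}, everything else being formal. A secondary point needing care is the bookkeeping of the descent of the bifunctor and of the monoidal coherence data along $q\colon\Psh(\square)\to\Cat_n$ (for which one wants $q$ to be not merely essentially surjective but a reflective localization, or else a direct colimit-generation argument as in \cite{campion-maehara}), together with the identification of the freshly constructed functor out of $\square$ with the canonical one $\square\subset\sCat_\omega\to\Cat_n$.
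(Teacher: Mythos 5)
Your overall architecture matches the paper's: choose a site of computads closed under the strict Gray tensor product, Day-convolve on presheaves, descend the monoidal structure along the reflective localization, then get uniqueness from density of $\square$ and the universal property of Day convolution. The uniqueness half is essentially the paper's argument (\cref{cor:day-refl}, \cref{thm:gray-unique}, \cref{cor:gray-unique1}), and your worry about whether $q\colon\Psh(\square)\to\Cat_n$ is a genuine localization is settled by \cite{campion-dense}: $\Theta$ lies in the idempotent completion of $\square$, so $\square$ is dense in $\Cat_\omega^\inc$ and the induced nerve into $\Psh(\square)$ is fully faithful. (Your parenthetical that each globe is a colimit of cubes via edge-collapse is not quite how this works --- globes are \emph{retracts} of cubes --- but the conclusion you need, colimit-generation, is correct.)

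The gap is in the existence step, exactly at the point you identify as ``the main obstacle'' but then declare to be entirely consumed by \cite{ara-lucas}. The input from \cite{ara-lucas} (compatibility of the strict Gray tensor product with monic pushouts along folk cofibrations) handles only the cell-attachment generators $\yo A\cup_{\yo\partial\globe_m}\yo\globe_m\to\yo(A\ast_{\partial\globe_m}\globe_m)$, i.e.\ the descent to the \emph{flagged} category $\Cat_\omega^\inc$ (this is \cref{thm:day-refl}; note also that no separate Segal generators are needed --- they follow from the cell attachments). It says nothing about the remaining generators needed to reach $\Cat_n$: the truncation maps $\Sigma^k p\colon\globe_{k+1}\to\globe_k$ (needed for finite $n$) are not folk weak equivalences at all, and the Rezk maps $\Sigma^r\rho\colon\Sigma^r I\to\Sigma^r[0]$ involve the walking isomorphism $I$, which is not a torsion-free complex and does not even live in the site, so there is no strict statement to ``transport across the nerve.'' The paper closes this with a separate, nontrivial argument (\cref{lem:decomp} through \cref{thm:gray-reflect}): one proves the decomposition $\square^1\otimes\Sigma C=(\square^1\otimes^\funny\Sigma C)\cup_{\Sigma(\partial\square^1\otimes C)}\Sigma(\square^1\otimes C)$ and the pushout $\Sigma C=(\square^1\otimes C)\cup_{\partial\square^1\otimes C}\partial\square^1$, and deduces that $\square^1\otimes(-)$ carries maps of the form $\Sigma^k t_C$ to $L$-acyclic maps whenever the localizing class is suspension-stable. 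This is also why the result holds for $\Cat_n$ but fails for localizations whose truncation multi-level is not order-preserving (\cref{prop:susp-pres}) --- a distinction your argument, which treats all generating local equivalences uniformly, cannot detect. Without some version of these suspension lemmas your existence proof does not go through.
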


We call this monoidal structure the \defterm{Gray tensor product} of $(\infty,n)$-categories. Theorem A tells us that the Gray tensor product of $(\infty,n)$-categories exists, and it is uniquely determined by its restriction to the Gray cubes. Several remarks are in order.

\begin{rmk*}
    The characterization of the Gray tensor product given Theorem A may seem circular. Of course, it is not in fact circular, since the strict Gray tensor product may be defined independently of the weak Gray tensor product. Better still, the strict Gray tensor product itself is typically defined in a similar two-step fashion (cf. \cite{Crans:thesis} \cite{al-Agl;Brown;Steiner:Multiple} \cite{steiner} \cite{ara-maltsiniotis}), by first giving a definition for certain combinatorially well-behaved $n$-categories such as cubes, parity complexes, or augmented directed complexes, and then extending to all strict $n$-categories by colimits. Theorem A may be applied without carrying out the second step in the strict world -- it suffices to understand the Gray tensor product of cubes. Of course, the Gray tensor product of cubes already encodes a great deal of intricate combinatorics, and it would be desirable to find some other characterization with even less in the way of combinatorial prerequisites. We give some progress in this direction in \cref{thm:gray-plain}, where we show that in the statement of Theorem A, at least when $n = \omega$, we may replace $\square$ with the non-full subcategory $\square_\semi$ of Gray cubes and subcomplex inclusions (equivalent to the category of plain semicubical sets -- see \cref{def:semi}).
\end{rmk*}

\begin{rmk*}
    Note that we have in fact supplied several ``Gray tensor products" -- one on $\Cat_n$ for each $n$. In the body of the paper, we supply many more (including e.g. a Gray tensor product of $(m,n)$-categories, where $m < \omega$, and a Gray tensor product of $(\infty,\infty)$-categories with coinductive equivalences). These all originate from the universal case (\cref{thm:day-refl}) where $n = \omega$, and in the slightly more general setting of \defterm{flagged} $(\infty,\infty)$-categories (cf. \cite{ayala-francis}), the $\infty$-category of which we denote $\Cat_\omega^\inc$ (see \cref{rmk:to-infty}). We then show that various localizations of $\Cat_\omega^\inc$ such as $\Cat_n$ are exponential ideals (\cref{cor:gray-reflect}), and Theorem A results.
\end{rmk*}

\begin{rmk*}
    The uniqueness part of Theorem A is straightforward to deduce from the fact that $\square$ is dense in $\Cat_\omega^\inc$ \cite{campion-dense}. The main import is the existence.
\end{rmk*}

\subsection*{Method} The construction of the primordial Gray tensor product on $\Cat_\omega^\inc$ is by Day convolution (\cref{thm:day-conv}) and reflection (\cref{thm:day-refl}). That is, we follow the following steps:

\begin{enumerate}
    \item We identify a dense subcategory $\calS \subset \Cat_\omega^\inc$ on which we already know how to define the Gray tensor product.
    \item We take the Day convolution on $\Psh(\calS)$.
    \item We show that the image $\Cat_\omega^\inc \subset \Psh(\calS)$ is an exponential ideal so that the tensor product descends. 
\end{enumerate}

\begin{rmk*}
    In Step (1), we call such an $\calS$ a \defterm{monoidally suitable site} (\cref{def:mon-suit}). For example, we may take $\calS$ to comprise the \defterm{strongly loop-free Steiner complexes} (see \cref{eg:mon-suit}). By definition, a monoidally suitable site is a full subcategory of the \defterm{torsion-free complexes} of \cite{forest}, which in turn is a full subcategory of $\sCat_\omega$, such that $\Theta \subseteq \calS$ and $\calS$ is closed under the strict Gray tensor product.\footnote{Note that the inclusion $\sCat_\omega \to \Cat_\omega^\inc$ is fully faithful, unlike the functor $\sCat_\omega \to \Cat_\omega^\inc \to \Cat_\omega$, so that the inclusion $\calS \to \sCat_\omega^\inc$ is also fully faithful. Of course, the torsion-free complexes are all gaunt, and the inclusion $\Gaunt_\omega \to \Cat_\omega$ is also fully faithful, so that $\calS \to \Cat_\omega$ is in fact fully faithful, but we do not use that here.} The requirement that $\Theta \subseteq \calS$ ensures that $\calS$ is dense in $\Cat_\omega^\inc$. Here $\Theta$ is Joyal's category $\Theta$.
\end{rmk*}

\begin{rmk*}
    For Step (3), it is important not merely to know that $\calS$ is dense in $\Cat_\omega^\inc$, but to have a good enough understanding of the localization $\Psh(\calS) \to \Cat_\omega^\inc$ to verify the exponential ideal property. The reason we work with torsion-free complexes is in order to invoke the main result of \cite{campion-paste}. This allows us to describe the localization explicitly as in Theorem B below, and deduce the exponential ideal property directly from known results in the strict $\omega$-cateogrical literature \cite{ara-lucas}.
\end{rmk*}

That is, we have in the course of the construction given a new family of models for $(\infty,n)$-categories. This in fact works without any monoidal assumptions: we define a \defterm{suitable site} to be any full subcategory $\calS$ of the category of torsion-free complexes which contains $\Theta$. We have:

\begin{thmb*}[see \cref{def:j-square} and \cref{cor:locn}]
    Let $\calS$ be a suitable site. Then the localization $\Psh(\calS) \to \Cat_\omega^\inc$ is generated by the maps
    \[\yo A \cup_{\yo \partial \globe_n} \yo \globe_n \to \yo B.\]
    for each pushout square in $\sCat_\omega$ of the form
    \begin{equation*}
        \begin{tikzcd}
            \partial \globe_n \ar[r,hook] \ar[d,tail] & \globe_n \ar[d, tail] \\
            A \ar[r,hook] & B
        \end{tikzcd}
    \end{equation*}
    which lies in $\calS$.
\end{thmb*}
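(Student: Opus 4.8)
The plan is to identify the full subcategory of $J_\square$-local objects of $\Psh(\calS)$ — where $J_\square$ denotes the set of maps $\yo A\cup_{\yo\partial\globe_n}\yo\globe_n\to\yo B$ appearing in the statement (cf.\ \cref{def:j-square}) — with $\Cat_\omega^\inc$, regarded inside $\Psh(\calS)$ through the fully faithful right adjoint of the localization. Since $\Theta\subseteq\calS$ and $\Theta$ is dense in $\Cat_\omega^\inc$ by \cite{campion-dense}, the site $\calS$ is dense as well, so $\Cat_\omega^\inc\hookrightarrow\Psh(\calS)$ is an accessible reflective localization; it therefore suffices to prove the two inclusions $\Cat_\omega^\inc\subseteq J_\square^\perp$ and $J_\square^\perp\subseteq\Cat_\omega^\inc$, after which the identification of the class of local equivalences with the strongly saturated class $\overline{J_\square}$ generated by $J_\square$ is purely formal.

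For $\Cat_\omega^\inc\subseteq J_\square^\perp$ we check that the reflector $L\colon\Psh(\calS)\to\Cat_\omega^\inc$ inverts each map of $J_\square$. As $L$ is cocontinuous and its composite with the Yoneda embedding is the dense inclusion $\calS\hookrightarrow\Cat_\omega^\inc$, we have $L\yo A\simeq A$, $L\yo B\simeq B$, and $L\yo\globe_n\simeq\globe_n$; and, applying the pasting theorem of \cite{campion-paste} to the iterated pushout that builds the formal boundary, $L$ sends $\yo\partial\globe_n$ to $\partial\globe_n$ regarded in $\Cat_\omega^\inc$. Hence $L$ carries $\yo A\cup_{\yo\partial\globe_n}\yo\globe_n\to\yo B$ to the comparison map from the homotopy pushout $A\cup_{\partial\globe_n}\globe_n$ (formed in $\Cat_\omega^\inc$) to $B$, and this is an equivalence exactly because the given square is a pushout in $\sCat_\omega$ between torsion-free complexes: that such strict cell-attachment pushouts of torsion-free complexes remain pushouts in $\Cat_\omega^\inc$ is precisely the pasting theorem, and this is the only place the torsion-freeness hypothesis on $\calS$ is used.

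The content of the theorem is the reverse inclusion $J_\square^\perp\subseteq\Cat_\omega^\inc$. Here we would use the description of $\Cat_\omega^\inc$ inside $\Psh(\calS)$ as the full subcategory of presheaves $X$ which \emph{(a)} are right Kan extended from $\Theta$ and \emph{(b)} restrict on $\Theta$ to a Segal $\Theta$-space; no completeness condition appears, these being flagged $(\infty,\omega)$-categories, and this is essential, as there is no finite torsion-free complex serving as the free-living equivalence and so a completeness generator could not lie in $\overline{J_\square}$ anyway. Given $X\in J_\square^\perp$, unwinding the definition shows that $X$ sends every strict $\globe_n$-cell-attachment pushout both of whose corners lie in $\calS$ to a pullback square of spaces; the work is to bootstrap this to \emph{(a)} and \emph{(b)}. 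For that we would invoke the explicit cellular combinatorics of torsion-free complexes: every object of $\calS$, and more finely every object of $\Theta$, carries a canonical filtration by globular cells, and, using the pasting theorem to recognize the relevant strict iterated pushouts as homotopy pushouts, one argues by transfinite induction on cell dimension that the $\Theta$-density comparison maps $\operatorname{colim}_{\theta\in\Theta_{/B}}\yo\theta\to\yo B$ (which yield \emph{(a)}) and the Segal spine maps of $\Theta$-objects (which yield \emph{(b)}) all lie in $\overline{J_\square}$.

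The main obstacle is this last induction. A cell filtration of an object $B\in\calS$ — even of a $\Theta$-object such as $[2]$, which is obtained from $[1]$ by first adjoining an isolated object and then a $1$-cell — passes through intermediate strict $\omega$-categories (here $[1]\sqcup[0]$) that need not belong to $\calS$, so $J_\square$-locality does not apply to them verbatim and one cannot simply peel off cells one at a time while staying in the site. Overcoming this for an \emph{arbitrary} suitable site $\calS$ is the crux; the most natural route is to first establish the statement when $\calS$ is the site of all torsion-free complexes — which is closed under the finite coproducts and skeleta arising in cell filtrations — and then transfer along the fully faithful left Kan extension $\Psh(\calS)\hookrightarrow\Psh(\calS')$ for $\calS\subseteq\calS'$, using that $\Theta$-density is inherited. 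Once \emph{(a)} and \emph{(b)} hold for every $X\in J_\square^\perp$ we obtain $J_\square^\perp=\Cat_\omega^\inc$; since $J_\square$ is a set, this exhibits $\Psh(\calS)\to\Cat_\omega^\inc$ as the accessible localization at $J_\square$, which is the assertion of \cref{cor:locn}.
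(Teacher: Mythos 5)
Your overall strategy matches the paper's: the inclusion $\Cat_\omega^\inc\subseteq J_\calS^\perp$ is exactly \cref{prop:i-cube-good} (a direct appeal to the pasting theorem of \cite{campion-paste}), and for the converse the paper likewise reduces to showing that cell-decomposition maps and spine inclusions become equivalences after localizing at $J_\calS$ — it just packages this via the restriction adjunction $\iota^\ast:\Psh(\calS)\rightleftarrows\Psh(\Theta):\iota_\ast$ and the criterion of \cref{lem:criteria}, rather than via your ``right Kan extended from $\Theta$ plus Segal'' characterization; the two formulations of the image of $\Cat_\omega^\inc$ in $\Psh(\calS)$ are equivalent.

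However, the ``main obstacle'' you identify at the end, and leave unresolved, is not actually there. A suitable site in the sense of \cref{def:suitable} is required to be \emph{closed under subcomplexes}, and every intermediate stage of the skeletal cell filtration of an object $B\in\calS$ is a subcomplex of $B$; in your own example, $[1]\sqcup[0]$ is a subcomplex of $[2]$ and hence lies in $\calS$. (The informal gloss of ``suitable site'' in the introduction omits this clause, but the definition invoked by the theorem includes it.) So the induction on skeleta goes through verbatim for an arbitrary suitable site, and this single induction simultaneously yields that globes colimit-generate $L_\calS\Psh(\calS)$ (\cref{prop:strong}), that the $\Theta$-density comparison maps are $J_\calS$-acyclic, and that the spine inclusions are $J_\calS$-acyclic. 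Your proposed detour — first proving the statement for the site of all torsion-free complexes and then ``transferring'' along $\Psh(\calS)\hookrightarrow\Psh(\calS')$ — is therefore unnecessary, and moreover is itself a gap: while $\iota_!$ carries $J_\calS$ into $J_{\calS'}$, deducing the result for the smaller site from the larger one would require knowing that the restrictions to $\calS$ of cell-attachment maps for objects of $\calS'\setminus\calS$ are $J_\calS$-acyclic, which you do not address. Replace the last paragraph with the observation about closure under subcomplexes and the argument is essentially the paper's.
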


The density of $\calS$ follows by definition; the key input to this description of the localization is the main theorem of \cite{campion-paste}, which allows us to describe the localization:
it is the universal localization forcing ``cell attachment pushouts" in $\calS$ to remain pushouts in $\Cat_\omega^\inc$. Before \cite{campion-paste} it was not known that these pushouts were in fact preserved by the inclusion $\sCat_\omega \to \Cat_\omega^\inc$ from strict $\omega$-categories to flagged $(\infty,\infty)$-categories. Knowing the generators of this localization is what allows us to verify that $\Cat_\omega^\inc$ is an exponential ideal in $\Psh(\calS)$ (\cref{thm:day-refl}) by directly quoting \cite{ara-lucas}'s result that the ``folk" model structure on $\sCat_\omega$ of \cite{lmw} is monoidal for the usual Gray tensor product. In fact, we only need a fragment of the result in \cite{ara-lucas}, namely the compatibility of the Gray tensor product with pushouts along folk cofibrations. 

\begin{rmk*}
    The models constructed in Theorem B are similar in spirit to the discussion of presheaf models in \cite{bsp} and the presheaf models on other sites considered in \cite{henry-regular}. When considering such models, there is generally a trade-off to be made: working with a larger site may be convenient, but generally comes at the cost of a less explicit description of the relevant localization. For instance, presheaves on Joyal's category $\Theta_n$ can be localized explicitly at the spine inclusions to get flagged $(\infty,n)$-categories, but $\Theta_n$ is among the smallest sites one might consider. Larger sites, such as the site $\Upsilon_n$ considered in \cite{bsp}, lead to less explicit descriptions of the localization. The homotopy pushouts considered in the present paper will allow to explicitly describe these localizations when working with larger sites than has previously been feasible.
\end{rmk*}

\begin{rmk*}
    Note that in the localization described in Theorem B, there is no need to explicitly include ``Segal" conditions having to do with spines. These follow from the stated cell-attachment conditions.
\end{rmk*}

\begin{rmk*}
    Models of other categories such as $\Cat_n$ follow by further localization (see \cref{cor:locn'}). 
\end{rmk*}

Finally, we give a universal property of the Gray tensor product:
\begin{thmc*}[See \cref{cor:gray-reflect} and \cref{cor:gray-unique1}]
    Let $n \in \nats \cup \{\omega\}$, and regard $\Cat_n$ as monoidal under the Gray tensor product. Let $\calE$ be monoidal biclosed and cocomplete. Then the restriction map from the space of strong monoidal cocontinuous functors $\Cat_n \to \calE$, to the space of cocontinuous functors $\Cat_n \to \calE$ with a strong monoidal restriction to $\square$, is an equivalence.
\end{thmc*}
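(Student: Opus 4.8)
The plan is to exhibit the Gray tensor product on $\Cat_n$ as the Day reflection of the Day convolution of the \emph{strict} Gray tensor product on the Gray cube category $\square$ itself — rather than on the larger monoidally suitable site used to construct it — and then to read Theorem C off from the universal properties of Day convolution and of monoidal reflective localizations, which already carry all of the coherence data of a strong monoidal functor, so that no coherences need be checked by hand.

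\textbf{Step 1: $\Cat_n$ as a monoidal reflective localization of $\Psh(\square)$.} By \cref{cor:gray-reflect} and \cref{cor:gray-unique1}, $\Cat_n$ carries a unique monoidal biclosed structure — the Gray tensor product — for which the canonical functor $\square\to\Cat_n$ is strong monoidal; since $\square^i=[1]^{\otimes i}$ in $\sCat_\omega$, this in particular gives natural equivalences $\square^i\otimes\square^j\simeq\square^{i+j}$ in $\Cat_n$. As $\square$ is dense in $\Cat_\omega^\inc$ \cite{campion-dense}, $\Cat_n$ is a reflective localization of $\Cat_\omega^\inc$, and density passes to reflective localizations of the ambient category, the image of $\square$ is dense in $\Cat_n$; hence the nerve $N\colon\Cat_n\to\Psh(\square)$, $Z\mapsto\Map_{\Cat_n}(\square^\bullet,Z)$, is fully faithful, and — $\Cat_n$ being cocomplete — it admits a left adjoint $\lambda$ with $\lambda\yo\square^i\simeq\square^i$, exhibiting $\Cat_n$ as a reflective localization of $\Psh(\square)$. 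Endow $\Psh(\square)$ with the Day convolution of the strict Gray tensor product (for which $\yo$ is strong monoidal). For $Z\in\Cat_n$, the Day internal-hom formula together with the biclosed structure on $(\Cat_n,\otimes)$ gives $\underline{\hom}_\ell(\yo\square^i,NZ)\simeq N\llbracket\square^i,Z\rrbracket_\ell$ and, symmetrically, $\underline{\hom}_r(\yo\square^i,NZ)\simeq N\llbracket\square^i,Z\rrbracket_r$; writing an arbitrary $X\in\Psh(\square)$ as a colimit of representables and using that $\underline{\hom}$ carries colimits in its first variable to limits and that $\Cat_n$ is closed under limits in $\Psh(\square)$, we conclude that $\Cat_n$ is a two-sided exponential ideal. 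By Day's reflection theorem $\lambda$ is then strong monoidal for a biclosed monoidal structure on $\Cat_n$; as the composite $\square\xrightarrow{\yo}\Psh(\square)\xrightarrow{\lambda}\Cat_n$ is the canonical functor and is strong monoidal for this structure, the uniqueness clause of \cref{cor:gray-unique1} identifies the structure with the Gray tensor product. Thus $\Cat_n$ is a monoidal reflective localization of $(\Psh(\square),\otimes_{\mathrm{Day}})$ at the class $W$ of $\lambda$-equivalences.

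\textbf{Step 2: assembling the universal properties.} As $\calE$ is biclosed and cocomplete, $\otimes_\calE$ preserves colimits separately in each variable. The universal property of Day convolution then says that restriction along $\yo$ is an equivalence from strong monoidal cocontinuous functors $\Psh(\square)\to\calE$ to strong monoidal functors $\square\to\calE$, and likewise for the underlying non-monoidal functors; the universal property of a monoidal reflective localization says that precomposition with $\lambda$ is an equivalence from strong monoidal cocontinuous functors $\Cat_n\to\calE$ to those strong monoidal cocontinuous functors $\Psh(\square)\to\calE$ that invert $W$, and likewise for the underlying functors. Composing these, and using $\lambda\yo\square^i\simeq\square^i$ so that the identifications are compatible with restriction along $\square\to\Cat_n$ and with forgetting monoidal structure, the commuting square with initial vertex $\Fun^\otimes_\cocts(\Cat_n,\calE)$, intermediate vertices $\Fun_\cocts(\Cat_n,\calE)$ and $\Fun^\otimes(\square,\calE)$, and terminal vertex $\Fun(\square,\calE)$ is identified with the square of full-subcategory inclusions having initial vertex $\Fun^\otimes_W(\square,\calE)$, intermediate vertices $\Fun_W(\square,\calE)$ and $\Fun^\otimes(\square,\calE)$, and terminal vertex $\Fun(\square,\calE)$, where a subscript $W$ singles out those functors whose left Kan extension along $\yo$ inverts $W$. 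This latter square is a pullback square: the left Kan extension of a strong monoidal functor $\square\to\calE$ underlies exactly the left Kan extension of its underlying functor, and inverting $W$ is a condition on that underlying functor. Hence the former square is a pullback square as well — which, unwinding, is precisely the statement that the restriction map
\[
\Fun^\otimes_\cocts(\Cat_n,\calE)\;\longrightarrow\;\Fun_\cocts(\Cat_n,\calE)\times_{\Fun(\square,\calE)}\Fun^\otimes(\square,\calE)
\]
of Theorem C is an equivalence.

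The main obstacle is Step 1, and within it the exponential-ideal computation $\underline{\hom}(\yo\square^i,NZ)\in\Cat_n$: this is what lets us replace the monoidally suitable site $\calS$ on which the tensor product was actually built (via \cref{thm:day-conv} and \cref{thm:day-refl}) by the much smaller category $\square$, after which Step 2 is purely formal. One could instead run Step 2 with $\calS$ in place of $\square$, invoking \cref{thm:day-refl} directly; but then the final identification would require the extra statement that a strong monoidal functor $\calS\to\calE$ is no more than a functor $\calS\to\calE$ equipped with a strong monoidal structure on its restriction to $\square$ — and this is itself proved using the density of $\square$ in $\Cat_\omega^\inc$ together with $\square^i\otimes\square^j\simeq\square^{i+j}$ and the cocontinuity of $\otimes$, i.e.\ from the same inputs, so nothing is gained by keeping $\calS$.
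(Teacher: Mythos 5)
Your proof is correct and follows essentially the same route as the paper: Theorem C is \cref{cor:gray-unique1}(\ref{item:unique1.1}), which the paper obtains from the density of $\square$ in $\Cat_\omega^\inc$ together with the Day convolution/reflection formalism and the compatibility of the localization with the monoidal structure via \cite[Proposition 4.1.7.4]{ha}. Your Step 1 --- verifying that $\Cat_n$ is an exponential ideal in $(\Psh(\square),\hat\otimes)$ by computing $\underline{\hom}(\yo\square^i,NZ)\simeq N\llbracket \square^i,Z\rrbracket$ from the already-established biclosedness of the Gray tensor product on $\Cat_n$ --- is precisely the content that the paper compresses into its one-line proof of \cref{thm:gray-unique}.
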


As with Theorem A, Theorem C applies to other related categories as well (see \cref{cor:gray-unique1}). In fact, given the existence of the Gray tensor product and the density of $\square \subset \Cat_\omega^\inc$, both Theorem A and Theorem C follow easily from the standard properties of the Day convolution. 
Both Theorem A and Theorem C should be compared to the results of \cite{campion-maehara} in dimension 2. See \cref{rmk:shortcomings} for a discussion of the subtle differences.

\subsection*{Future work} It remains to compare various ``Gray tensor products" appearing in the literature. Just as in the case $n = 2$ in \cite{campion-maehara}, we now have ``Gray tensor product" with a universal property, but just as in that case, it may be difficult to verify that other constructions in the literature actually enjoy this universal property. We also leave the development of basic properties of the Gray tensor product, comparison to the cartesian product, etc. to future work.

\subsection*{Outline} We begin in \cref{sec:loc} by reviewing the $\infty$-category $\Cat_\omega^\inc$ of flagged $(\infty,\infty)$-categories and various localizations thereof, starting from the $\Theta$-space model. We introduce (see esp. \cref{def:rezkloc} and \cref{def:coindloc}) some notation for discussing a particularly interesting family of such localizations which includes e.g. $\Cat_n$ for $n \leq \omega$ and $\Cat_\omega^{\coind}$ with coinductive weak equivalences. Next, in \cref{sec:sites} we show how to construct all of these $\infty$-categories as localizations of presheaves on a \defterm{suitable site} (\cref{def:suitable}). In \cref{sec:gray}, we apply our knowledge of these new models to the case of a \defterm{monoidally suitable site} (\cref{def:mon-suit}) to construct (via Day convolution and reflection) the Gray tensor product on $\Cat_\omega^\inc$ (\cref{thm:day-refl}) and on the most important of the localizations thereof from the previous section (\cref{cor:gray-reflect}). Finally, in \cref{sec:unique} we prove that the resulting Gray tensor product enjoys several (model-independent) universal properties as described above.

\subsection{Notation}
We write $\Psh(\calC)$ for the $\infty$-category of presheaves of spaces on an $\infty$-category $\calC$. We write $A \ast_S G$ for a pushout of $A \leftarrow S \to G$ taken in the 1-category $\sCat_\omega$ of strict $\omega$-categories. If $S$ is a class of morphisms in a cocomplete $\infty$-category $\calC$, then $L_S \calC$ denotes the cocontinuous localization at this class of morphisms, and $L_S : \calC \to L_S \calC$ the localization functor. We denote by $S^n$ the $n$-sphere, and in particular $S^{-1} = \emptyset$. If $n \in \nats$, we denote $[n] = \{0,1,\dots, n\}$ and $[\omega] = \nats$ for the corresponding ordinal. We write $\Lev = \ints_{\geq -2} = \{n \in \ints \mid n \geq -2\}$, $\Levw = \Lev \cup \{\omega\}$ and $\natsw = \nats \cup \{\omega\}$. We recall that in ordinal arithmetic we have $1 + \omega = \omega$. We denote $\Pr^L$ the $\infty$-category of presentable $\infty$-categories and left adjoint functors. We write $\yo$ for the Yoneda embedding.

\subsection{Acknowledgements}
Thanks to Alexander Campbell, Yuki Maehara, and Emily Riehl for helpful discussions. I'm grateful for the support of the ARO under MURI Grant W911NF-20-1-0082.

\section{The $\infty$-category of flagged $(\infty,\infty)$-categories and localizations thereof}\label{sec:loc}
In this section, we discuss various $(\infty,1)$-categories of $(\infty,n)$-categories. Beginning in \cref{subsec:flagged}, we ground our discussion in the $\Theta$-space model of \cite{rezk}, but the main points are all model-independent. We begin by discussing, for each $n \in \natsw$, the $\infty$-category $\Cat_n^\inc$ of \defterm{flagged $(\infty,n)$-categories} of \cite{ayala-francis}. We discuss their relations and how to model them as $\Theta_m$-spaces for $m \geq n$. The case $n = \omega$ does not appear in \cite{ayala-francis}, but presents no real additional difficulty. Then in \cref{subsec:loc}, we localize the categories $\Cat_n^\inc$ to obtain various $\infty$-categories of $(\infty,n)$-categories with different truncation conditions (\cref{def:truncloc}), different Rezk completeness conditions (\cref{def:rezkloc}), and with the option for coinductive equivalences (\cref{def:coindloc}). We introduce notation for such localizations, the most general of which is denoted $\Cat_{\underline{d}}^{\inc < R}$ or $\Cat_{\underline{d}}^{\inc <^\coind C}$. We point out many examples.

\subsection{Flagged $(\infty,n)$-categories as presheaves on $\Theta$}\label{subsec:flagged}
We begin our discussion of higher categories with a discussion of \defterm{flagged $(\infty,n)$-categories} (cf \cite{ayala-francis}). This includes the case $n = \omega$ (cf. \cref{rmk:to-infty}), which gives the most general $\infty$-category of higher $\infty$-categories which we will consider -- the $\infty$-category $\Cat_\omega^\inc$ of \defterm{flagged $(\infty,\infty)$-categories}. All of the other $\infty$-categories of higher $\infty$-categories we shall consider are localizations of this one. We include a discussion (\cref{lem:same-loc}) of the Segal conditions used to model these categories as $\Theta_n$-spaces.

\begin{Def}
For $n \in \natsw$, let $\sCat_\omega$ denote the strict 1-category of strict $\omega$-categories. That is, for $n \in \nats$, we inductively define $\sCat_0 = \Set$, and $\sCat_{1+n}$ is the strict 1-category of small categories enriched in $\sCat_n$. There are inclusion functors $\sCat_n \to \sCat_{1+n}$, and right adjoint forgetful functors $\sCat_{1+n} \to \sCat_n$. The limit of these forgetful functors for $n \in \nats$ is $\sCat_\omega$.
\end{Def}

\begin{Def}\label{def:susp-strict}
For $n \in \natsw$, let $\Sigma : \sCat_n \to \sCat_{1+n}$ denote the suspension functor. Here $\Sigma C$ is the category with two objects, $0$ and $1$, and $\Hom(0,0) = \Hom(1,1) = [0]$, $\Hom(0,1) = C$, and $\Hom(1,0) = \emptyset$. Note that there are natural lifts to adjunctions between the coslice categories 
\[\Sigma_{/} : \sCat_n \rightleftarrows (\sCat_{1+n})_{\partial [1] /} : \Hom\] and between the slice/coslice categories
\[\Sigma_{//} : \sCat_n \rightleftarrows (\sCat_{1+n})_{\partial [1] / / [1]} : \Hom_/\]
where $\Hom(\partial [1] \xrightarrow {x, y} C) = \Hom_C(x,y)$ and $\Hom_/(\partial [1] \xrightarrow {x, y} C \to [1]) = \Hom_C(x,y)$. 
Here the ordinal $[1]$ is regarded as an ordered set and hence a category, and $\partial [1] = [0] \amalg [0]$ is the discrete subcategory with the same objects as $[1]$.
\end{Def}

\begin{Def}\label{def:wedge}
Let $C$ be an $\infty$-category, $c \in C$ an object (the terminal object if unspecified), and $C_{c \amalg c /} = C_{c/} \times_C C_{c/}$ the $\infty$-category of $c$-bipointed objects in $C$. The \defterm{wedge sum} of $c \xrightarrow{d_-} d \xleftarrow{d_+} c, c \xrightarrow{e_-} e \xleftarrow{e_+} c \in C_{c\amalg c / }$ (if it exists) is the object $c \xrightarrow{i_- d_-} d \cup_c e \xleftarrow{i_+ d_+} c$, where $d \cup_c e$ is the pushout of $d \xleftarrow{d_+} c \xrightarrow{d_-} e$, and $d \xrightarrow{i_-} d \cup_c e \xleftarrow{i_+} e$ are the pushout inclusions. We denote this wedge sum $c \vee d$ when there is no confusion over the bipointing, and $c \vee^\calC d$ when we wish to emphasize the category in which the pushout is taken.
\end{Def}

\begin{Def}
Let $\Theta$ denote Joyal's category $\Theta$ (see e.g. \cite{berger}). There are many possible equivalent definitions of $\Theta$; here is one. First, $\Theta_{\ast \ast}$ is the smallest full subcategory of $(\sCat_\omega)_{\partial [1] / }$ which contains the terminal category $[0]$ and is closed under $\Sigma_/$ and wedge sum. Then, $\Theta$ is the full subcategory of $\sCat_\omega$ spanned by the image of the objects of $\Theta_{\ast,\ast}$ under the forgetful functor $(\sCat_\omega)_{\partial [1] / } \to \sCat_\omega$. For $n \in \natsw$, we write $\Theta_n = \Theta \cap \sCat_n$ (so that $\Theta = \Theta_\omega = \cup_{n \in \nats} \Theta_n$).
\end{Def}

\begin{rmk}\label{rmk:colim}
Note that for $n \in \nats$ we have inclusion / truncation adjunctions $\Theta_{n+1} \rightleftarrows \Theta_n$. The colimit of the inclusion functors is $\Theta = \Theta_\omega$ itself. 
By precomposition, there are induced inclusion / forgetful adjunctions $\Psh(\Theta_n) \rightleftarrows \Psh(\Theta_{n+1})$.\footnote{There is also a further left adjoint (truncation) and a further right adjoint giving an adjoint quadruple, but we ignore these.}
The limit along the forgetful functors (ranging over finite $n$) is $\Psh(\Theta)$. Equivalently, $\Psh(\Theta)$ is the colimit in $\Pr^L$ of the inclusion functors $\Psh(\Theta_n) \to \Psh(\Theta_{n+1})$. 
\end{rmk}

We are interested in modeling $(\infty,n)$-categories as presheaves (of spaces) on $\Theta_n$ for $n \leq \omega$. To this end, we begin by lifting the suspension functor to $\Psh(\Theta_n)$.

\begin{Def}\label{def:susp}
Let $n \in \natsw$. Let $\Sigma_/ : \Theta_n \to (\Theta_{1+n})_{/[1]}$ be the suspension functor. Note that $\Sigma_{/}$ admits two canonical lifts $\Sigma^0_/,\Sigma^1_/ : \Theta_n \rightrightarrows (\Theta_{1+n})_{\ast //[1]}$ where $\ast \in \Theta_0$ is the terminal object. Let $\bar \Sigma_/ = \yo_{(\Theta_{1+n})_{/[1]}} \Sigma_/ : \Theta_n \to \Psh((\Theta_{1+n})_{/[1]})$. As the Yoneda embedding preserves the terminal object and presheaves commute with slicing, we have also two lifts $\bar \Sigma^0_/ , \bar \Sigma^1_/ : \Theta_n \rightrightarrows \Psh(\Theta_{1+n})_{\ast//[1]}$. Taking the pullback, we obtain a functor $\bar \Sigma_{//} : \Theta_n \to \Psh(\Theta_{1+n})_{\ast//[1]} \times_{\Psh(\Theta_{1+n})_{/[1]})} \Psh(\Theta_{1+n})_{\ast//[1]} \simeq \Psh(\Theta_{1+n})_{\partial [1]//[1]}$ to bipointed $\Theta_{1+n}$-spaces over $[1]$. Moreover, this functor is fully faithful. There is a unique colimit-preserving extension $\Sigma_{//}$, again fully faithful, and a right adjoint $\Hom_/$:
\[\tilde \Sigma_{//} : \Psh(\Theta_n) \rightleftarrows \Psh(\Theta_{1+n})_{\partial [1] //[1]} : \Hom_/\]
We let $\tilde \Sigma : \Psh(\Theta_n) \to \Psh(\Theta_{1+n})$ denote the composite of $\tilde \Sigma_{//}$ with the forgetful functor.
\end{Def}

\begin{Def}\label{def:spine}
Let $n \in \natsw$. The set of \defterm{basic wedge inclusions} \cite[Notation 12.1]{bsp} is the following set of morphisms in $\Psh(\Theta_n)$: \begin{align*}
    \tilde \Sigma^k(\yo(\theta) \vee^{\Psh(\Theta_n)} \yo(\zeta)) \to \yo(\Sigma^k(\theta \vee^{\sCat_n^\inc} \zeta)) \qquad \text{for $\theta, \zeta \in \Theta_{n-k}$}
\end{align*}

The set of \defterm{basic spine inclusions} \cite[Section 5]{rezk} is the following set of morphisms in $\Psh(\Theta_n)$:
\begin{align*}
    \tilde \Sigma^k(\yo(\Sigma \theta_1) \vee^{\Psh(\Theta_n)} \cdots \vee^{\Psh(\Theta_n)} \yo(\Sigma \theta_r)) \to \yo(\Sigma^k(\Sigma \theta_1 \vee^{\sCat_n} \vee \cdots \vee^{\sCat_n} \Sigma \theta_r)) 
\end{align*}
for $\theta = \Sigma \theta_1 \vee \cdots \vee \Sigma \theta_r \in \Theta_{n-k}$.
\end{Def}

\begin{rmk}
    From the adjunctions $\Sigma_{//} \dashv \Hom_/$ and $\tilde \Sigma_{//} \dashv \Hom_/$ in \cref{def:susp-strict} and \cref{def:susp} we see that $\Sigma$ and $\tilde \Sigma$ preserve contractible colimits, and in particular pushouts. Since these functors do not preserve terminal objects, they do not preserve the wedge sums appearing in \cref{def:spine}, but they do carry them to pushouts in $\Psh(\Theta_n)$ and $\sCat_n$ respectively.
\end{rmk}

\begin{lem}[\cite{bsp}]\label{lem:same-loc}
Let $n \in \natsw$. The localization of $\Psh(\Theta_n)$ at the basic wedge inclusions coincides with the localization of $\Psh(\Theta_n)$ at the basic spine inclusions.
\end{lem}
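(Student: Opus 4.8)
The plan is to show both localizations coincide by exhibiting each class of maps as built from the other by the ``two-out-of-three''-style closure properties of a class generating a cocontinuous localization. Recall that for a set of maps $S$ in a presentable $\infty$-category, the $S$-local objects (equivalently, the associated localization $L_S$) depend only on the \emph{saturated} class generated by $S$ — the smallest class containing $S$, closed under pushout, transfinite composition, retracts, and two-out-of-three. So it suffices to prove that the saturation of the basic wedge inclusions equals the saturation of the basic spine inclusions. Since the suspension functors $\tilde\Sigma^k$ preserve pushouts and (being left adjoints after the colimit-extension) preserve the relevant colimits, and since the wedge-sum functors $- \vee \yo(\zeta)$ likewise interact well with these colimits, it is enough to handle the case $k=0$: that is, to relate the plain wedge inclusions $\yo(\theta)\vee^{\Psh}\yo(\zeta)\to\yo(\theta\vee^{\sCat}\zeta)$ and the plain spine inclusions $\yo(\Sigma\theta_1)\vee^{\Psh}\cdots\vee^{\Psh}\yo(\Sigma\theta_r)\to\yo(\Sigma\theta_1\vee^{\sCat}\cdots\vee^{\sCat}\Sigma\theta_r)$ inside $\Psh(\Theta_n)$, using that the general statement then follows by applying $\tilde\Sigma^k$ and, where needed, wedging on the right with representables.

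First I would show spine inclusions lie in the saturation of the wedge inclusions. A spine inclusion is an $r$-fold wedge. I would factor it as a composite of $r-1$ single pushout-steps, each of which attaches one more factor: at the $j$-th step we have already built $\yo(\Sigma\theta_1\vee^{\sCat}\cdots\vee^{\sCat}\Sigma\theta_j)$ glued in at the appropriate endpoint, and we wedge on $\yo(\Sigma\theta_{j+1})$; the step from the presheaf-level wedge to the $\sCat$-level wedge is, by associativity of the pushouts defining wedge sums (\cref{def:wedge}), a pushout of the \emph{basic} wedge inclusion $\yo(\Sigma\theta_1\vee^{\sCat}\cdots\vee^{\sCat}\Sigma\theta_j)\vee^{\Psh}\yo(\Sigma\theta_{j+1})\to\yo\big((\Sigma\theta_1\vee^{\sCat}\cdots\vee^{\sCat}\Sigma\theta_j)\vee^{\sCat}\Sigma\theta_{j+1}\big)$ along the map that collapses the already-rectified front part — here one must check the representable of an iterated $\sCat$-wedge of suspensions is again an object of $\Theta$, which it is, since $\Theta$ is closed under wedge sum and suspension by construction. (The wedge inclusion used at each stage is of the required form because an iterated $\sCat$-wedge of objects $\Sigma\theta_i$ is of the shape $\Sigma\eta_1\vee\cdots$, i.e.\ lies in $\Theta_{n}$ with the correct bipointing.) Composing these $r-1$ pushout-squares and invoking closure under composition gives the spine inclusion in the saturated class.

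Conversely I would show wedge inclusions lie in the saturation of the spine inclusions. Here the key observation is that an arbitrary $\theta\in\Theta_{n-k}$ decomposes, by the very definition of $\Theta$ via closure under $\Sigma_/$ and wedge, as an iterated wedge $\Sigma\theta_1\vee\cdots\vee\Sigma\theta_r$ of suspensions (with $\theta_i\in\Theta_{n-k-1}$), and this decomposition is functorial enough that the basic wedge inclusion $\yo(\theta)\vee^{\Psh}\yo(\zeta)\to\yo(\theta\vee^{\sCat}\zeta)$ — with $\theta=\Sigma\theta_1\vee\cdots\vee\Sigma\theta_r$ and $\zeta=\Sigma\zeta_1\vee\cdots\vee\Sigma\zeta_s$ — fits into a commuting diagram whose three ``spine'' edges are basic spine inclusions: the one resolving $\theta$, the one resolving $\zeta$, and the one resolving the $(r+s)$-fold wedge $\Sigma\theta_1\vee\cdots\vee\Sigma\theta_r\vee\Sigma\zeta_1\vee\cdots\vee\Sigma\zeta_s=\theta\vee^{\sCat}\zeta$; a diagram chase (two-out-of-three, plus the fact that a wedge of spine inclusions is again in the saturated class as above) then puts the wedge inclusion in the saturation of the spine inclusions. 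One subtlety is that the innermost pieces $\theta_i$ need not themselves be suspensions (they lie in $\Theta_{n-k-1}$, not necessarily in the image of $\Sigma$), but this is fine: the spine condition only asks that the \emph{outermost} layer be a wedge of suspensions, which it is by construction of $\Theta$.

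The main obstacle I expect is the bookkeeping in the second direction — making the claim ``the wedge inclusion sits in a triangle of three spine inclusions'' genuinely precise, i.e.\ identifying the comparison map between $\yo(\theta)\vee^{\Psh}\yo(\zeta)$ and the presheaf-level $(r+s)$-fold wedge of suspensions and checking the triangle commutes. This amounts to carefully tracking the basepoints through the nested pushouts of \cref{def:wedge} and using associativity/compatibility of wedge sums; it is essentially the argument in \cite[Notation 12.1 ff.]{bsp}, and I would either reproduce that bookkeeping or cite it directly. Everything else — the reduction to $k=0$, closure of $\Theta$ under the operations involved, and the fact that a finite wedge of maps in a saturated class is again in it — is routine.
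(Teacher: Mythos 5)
Your proof is correct and is precisely the ``straightforward details'' that the paper delegates to its references (\cite[Section 13, footnote 2]{bsp} and \cite[Lemma 2.6]{campion-paste}): each spine inclusion is a finite composite of cobase changes of basic wedge inclusions, and conversely each wedge inclusion sits in a two-out-of-three triangle whose other two legs (the presheaf-level wedge of the spine inclusions of the factors, and the spine inclusion of the glued object) lie in the strongly saturated class generated by the spine inclusions. The reduction to $k=0$ via the fact that $\tilde\Sigma$ preserves pushouts and composites is also the intended one, so there is nothing to correct.
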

\begin{proof}
This is \cite[Section 13, footnote 2]{bsp}. The (straightforward) details are spelled out e.g. in \cite[Lemma 2.6]{campion-paste}. 
\end{proof}

\begin{Def}\label{def:theta-space}
Let $n \in \natsw$. We let $\Cat_n^\inc$ denote the localization of $\Psh(\Theta_n)$ at the basic spine inclusions. 
%
\end{Def}

\begin{rmk}\label{rmk:to-infty}
For $n \in \nats$, the $\infty$-category $\Cat_n^\inc$ is the the $\infty$-category of \defterm{flagged $(\infty,n)$-categories} in the sense of \cite{ayala-francis}. In the case $n = \omega$, we extend this terminology, referring to $\Cat_\omega^\inc$ as the \defterm{$\infty$-category of flagged $(\infty,\infty)$-categories}. Since the spine inclusions are manifestly preserved by the inclusion $\Psh(\Theta_n) \to \Psh(\Theta_{n+1})$, it results that the limit expression of \cref{rmk:colim} reflects down to exhibit $\Cat_\omega^\inc$ as the limit of the forgetful functors $\Cat_{n+1}^\inc \to \Cat_n^\inc$. It follows that $\Cat_\omega^\inc$ admits a description analogous to the description of $\Cat_n^\inc$ given in \cite{ayala-francis} for $n \in \nats$. Equivalently, $\Cat_\omega^\inc$ is the colimit in $\Pr^L$ of the inclusion functors $\Cat_n^\inc \to \Cat_{n+1}^\inc$.

We wish to emphasize that although the term ``the $(\infty,1)$-category of $(\infty,\infty)$-categories" is ambiguous (and we shall discuss inductive and coinductive equivalences presently -- see \cref{def:rezkloc} and \cref{def:coindloc}), these subtleties only arise later -- they are absent in the flagged setting of \cref{def:theta-space}, where we have Segal conditions but no Rezk completeness conditions. Thus the term ``the $(\infty,1)$-category of flagged $(\infty,\infty)$-categories" refers unambiguously to $\Cat_\omega^\inc$.\footnote{Of course, $\Cat_\omega^\inc$ has a nontrivial automorphism group $(\ints/2)^\omega$, but that is a different kind of ambiguity.}
\end{rmk}

\begin{Def}\label{def:nerve}
    Let $n \in \natsw$. The inclusion $\Theta_n \to \sCat_n$ induces a nerve functor $\nerve : \sCat_n \to \Psh(\Theta)$, which in fact lands in $\Cat_n^\inc$, since the spine inclusions sketch colimits which are in fact colimits in $\sCat_n$.
\end{Def}

\begin{rmk}
    Because the spine inclusions are closed under $\tilde \Sigma$, the suspension / hom adjunctions of \cref{def:susp} all descend from $\Psh(\Theta_n)$ to $\Cat_n^\inc$. We denote the induced functor 
    \[\Sigma : \Cat_n^\inc \to \Cat_{1+n}^\inc\]
    for $n \in \nats_\infty$. Similarly we have $\Sigma_/, \Sigma_{//}$. The nerve functor of \cref{def:nerve} carries suspensions to suspensions (see e.g. \cite[Section 2]{campion-paste}), so there is no risk of confusion with the notation of \cref{def:susp-strict}.
\end{rmk}

\begin{Def}
    We denote by $\globe_n = \Sigma^n [0] \in \Theta_n$ the $n$-fold suspension of a point. We continue to write $\globe_n$ for the same object in $\sCat_n$, or in $\Psh(\Theta_n)$ (via the Yoneda embedding), or in $\Cat_n^\inc$ (via the nerve functor). We write $\partial \globe_n \in \sCat_{n-1}$ for the $n-1$-fold suspension of $\partial[1]$, and continue to write $\partial \globe_n$ for the image of $\partial \globe_n$ under the nerve functor -- i.e. as an object of $\Psh(\Theta_m)$ or $\Cat_m^\inc$ for $n \leq m \leq \omega$.
\end{Def}

\subsection{Localizations of flagged $(\infty,n)$-categories}\label{subsec:loc}
In this subsection, we localize $\Cat_\omega^\inc$ to obtain various $\infty$-categories of higher categories. We consider four-parameter family of localizations, specified by the \defterm{category level} (\cref{def:cat-lev}), the \defterm{truncation multi-level} (\cref{def:truncloc}), the \defterm{Rezk completeness level} (\cref{def:rezkloc}), and the \defterm{coinductive completeness level} (\cref{def:coindloc}) (a further localization beyond Rezk completeness). Each is specified by a \defterm{level} -- an element of $\Levw = \ints_{\geq -2} \cup \{\omega\}$, except for the truncation level, which is specified by a \defterm{multi-level}, which gives a level for each dimension. This family of localizations includes e.g. $\Cat_n$ or $\Cat_{(m,n)}$, but also (via the delooping hypothesis) $\Alg_{E_k}(\Cat_n)$. Of the three families of examples mentioned in the previous sentence, the first two but not the last will be seen to admit Gray tensor products in \cref{sec:gray}. 

\begin{Def}\label{def:cat-lev}
Let $n \in \nats$. Letting $p : [1] \to [0]$ be the canonical projection, we consider the maps 
\[\Sigma^n p : \globe_{n+1} \to \globe_n\]
for $n \in \nats$.
\end{Def}

\begin{eg}
Let $m \leq n \leq \omega$. Then $\Cat_m^\inc$ is canonically identified, via the inclusion functor, with the localization of $\Cat_n^\inc$ at the maps $\Sigma^k p$ for $k \leq m < n$.
\end{eg}

\begin{Def}\label{def:truncloc}
Let $n \in \nats$ and $d \in \Lev$. We denote by 
\[\Sigma^n (t_d : S^{d+1} \to [0])\]
the \defterm{$n$th $d$-truncation map}.

Let $n \in \natsw$, and let $\underline{d} : [n] \to \Levw$ be a function (such a pair we call a \defterm{multi-level}). We denote by 
\[\Cat_{\underline{d}}^\inc\]
the localization of $\Cat_n^\inc$ at the morphisms $\{\Sigma^m t_{d(m)} \mid m \in [n]\}$.
\end{Def}

\begin{rmk}
Let $\underline d : [n] \to \Levw$ be a multi-level. Informally, $\Cat_{\underline{d}}^\inc$ comprises those flagged $(\infty,n)$-categories $C$ such that for all $m \in \nats$, and for all boundary shapes $\partial \globe_m \to C$, the space of $m$-cells with the given boundary is $d(m)$-truncated. 
\end{rmk}

\begin{rmk}
Let $\underline{d} : [n] \to \Levw$ be a multi-level. Then localizing $\Cat_{\underline{d}}^\inc$ at the maps $\Sigma^k p$ for $k \geq m$ results in a category which is canonically equivalent to $\Cat_{\underline{d}|_{[m]}}^\inc$.
\end{rmk}

\begin{rmk}
    Let $n \in \nats_\infty$. Note that both $[n]$ and $\Levw$ have natural orderings; a multi-level $\underline d : [n] \to \Levw$ is said to be \defterm{order-preserving} if it respects these orderings. In general, a multi-level is not required to be order-preserving. However, we shall see in \cref{sec:gray} that $\Cat_{\underline d}^\inc$ admits a Gray tensor product if and only if $\underline d$ is order-preserving.
\end{rmk}



\begin{obs}
Let $n \in \natsw$, let $(\underline{d}_i : [n] \to \Levw)_{i \in I}$ be a family of multi-levels, and let $\underline{d} : \nats \to \Levw$ be the infinum of this family (which is a minimum if the family is nonempty). Then $\cap_{i \in I} \Cat_{\underline{d}_i}^\inc = \Cat_{\underline{d}}^\inc$.
\end{obs}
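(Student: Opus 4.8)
The plan is to identify both sides with reflective subcategories of $\Cat_n^\inc$ and compare their classes of local objects. First I would recall that an object of $\Cat_n^\inc$ lies in $\Cat_{\underline d_i}^\inc$ exactly when it is local with respect to $\Sigma^m t_{\underline d_i(m)}$ for all $m \in [n]$, and that the intersection of a family of accessible localizations of a presentable $\infty$-category is the localization at the union of the defining classes of maps. Here that union, $\bigcup_{i \in I}\{\Sigma^m t_{\underline d_i(m)} \mid m \in [n]\}$, is a genuine set even when the family $I$ is large, since for each fixed $m$ the levels $\{\underline d_i(m) \mid i \in I\}$ form a subset of $\Levw$. So the claim reduces to showing that for each fixed $m$, the objects of $\Cat_n^\inc$ local with respect to the family $\{\Sigma^m t_{\underline d_i(m)} \mid i \in I\}$ are precisely those local with respect to the single map $\Sigma^m t_{\underline d(m)}$, where $\underline d(m) = \inf_{i \in I} \underline d_i(m)$.

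Next I would dispose of the empty case: when $I = \varnothing$ the intersection is all of $\Cat_n^\inc$, and $\underline d(m) = \omega$ is the top element of $\Levw$ for every $m$, so by the convention that localizing at $\Sigma^m t_\omega$ is vacuous both sides equal $\Cat_n^\inc$. For nonempty $I$ the point is that $\Levw = \ints_{\geq -2}\cup\{\omega\}$ is well-ordered, so the set $\{\underline d_i(m) \mid i \in I\}$ attains its infimum — this is the assertion in the statement that the infimum is a minimum. Hence $\Sigma^m t_{\underline d(m)}$ is literally one of the maps $\Sigma^m t_{\underline d_i(m)}$, which gives one containment of local classes for free; for the reverse, I must check that $\Sigma^m t_{\underline d(m)}$-locality implies $\Sigma^m t_d$-locality whenever $d \geq \underline d(m)$.

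That last implication is just the fact that a $\underline d(m)$-truncated space is $d$-truncated for all $d \geq \underline d(m)$, transported through the description of these local objects recalled after \cref{def:truncloc}: the $\Sigma^m t_d$-local objects of $\Cat_n^\inc$ are those $C$ for which, for every boundary $\partial \globe_m \to C$, the space of $m$-cells filling it is $d$-truncated. (One could equally argue formally, observing that the $t_d$, $d \in \Lev$, present a nested chain of localizations and that $\Sigma^m$ is monotone in $d$ with respect to the resulting truncation conditions.) Combining the two containments shows that the $\{\Sigma^m t_{\underline d_i(m)} \mid i \in I\}$-local objects coincide with the $\Sigma^m t_{\underline d(m)}$-local objects; taking the union over $m$ and reassembling with the first paragraph identifies $\bigcap_{i \in I}\Cat_{\underline d_i}^\inc$ with $\Cat_{\underline d}^\inc$.

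I do not expect a genuine obstacle here — the statement is essentially bookkeeping. The only things needing care are the conventions for the level $\omega$ and the empty family, and the elementary but load-bearing point that an infimum of levels is attained, so that the strongest truncation condition occurring in the family subsumes all the others.
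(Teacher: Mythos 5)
Your proof is correct; the paper states this as an observation with no proof at all, so there is nothing to diverge from. Your argument — reducing to locality classes for each fixed $m$, using that $\Levw$ is well-ordered so the infimum is attained (hence $\Sigma^m t_{\underline d(m)}$ is literally one of the given maps), and that $\Sigma^m t_d$-locality implies $\Sigma^m t_{d'}$-locality for $d' \geq d$ by monotonicity of truncation — is exactly the bookkeeping the paper leaves implicit, including the correct handling of the conventions for $\omega$ and the empty family.
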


\begin{eg}
Let $\const_\omega^n : [n] \to \Levw$ be the constant multi-level with value $\omega$. Then $\Cat_{\const_\omega^n}^\inc = \Cat_n^\inc$. In particular, $\Cat_{\const_\omega^\omega}^\inc = \Cat_\omega^\inc$.
\end{eg}

\begin{eg}
Let $C \in \Cat_\omega^\inc$, and let $n \in \nats$. Then the map $\Sigma^n t_{-2} : \partial \globe_n \to \globe_n$ is the canonical inclusion, and $C$ is $\Sigma^n t_{-2}$-local if and only if its spaces of $n$-morphisms-with-fixed-boundary are all contractible.

Let $n, m \in \nats$, and let $\underline{d} : [m+n] \to \Levw$ be a multi-level such that $\underline{d}(k) = -2$ for $0 \leq k < n$. Let $\underline{e} : [m] \to \Levw$ be the multi-level defined by $\underline{e}(k) = \underline{d}(k+n)$. Then by the delooping hypothesis, $\Cat_{\underline{d}}^\inc$ should be identifiable with $E_{n+1}$-monoids in $\Cat_{\underline{e}}^\inc$.

Note that these examples are only interesting if $\underline{d}$ is \emph{not} order-preserving, and thus we shall see (\cref{prop:susp-pres}) that the Gray tensor product does \emph{not} reflect straightforwardly to these settings.
\end{eg}

\begin{eg}
Let $C \in \Cat_\omega^\inc$, and let $n \in \nats$. The map $\Sigma^n t_{-1} : \partial \globe_{n+1} \to \globe_n$ is the canonical projection, and $C$ is $\Sigma^n t_{-1}$-local if and only if its $n$-fold hom-categories are all empty or contractible. For instance, the category of preorders is $\Cat^\inc_{0,-1}$.
\end{eg}

\begin{eg}
We have that $C$ is $\Sigma^n t_{0}$-local if and only if its $n$-fold hom-spaces are all discrete.

The nerve functor induces an equivalence $\nerve : \sCat_n \to \Cat_{\const_0^n}^\inc$, where $\const_0^n : [n] \to \Levw$ is constant with value $0$.
\end{eg}

\begin{eg}\label{eg:mninc}
Let $m,n \in \nats$, and let $\underline{d} : [n] \to \Levw$ be the multi-level defined by $\underline{d}(k) = m + n - k$. Then $\Cat_{\underline{d}}^\inc$ is the $\infty$-category of flagged $(m+n, n)$-categories.
\end{eg}

\begin{Def}\label{def:rezkloc}
Let $I \in \sCat_1$ denote the walking isomorphism and $r \in \nats$.  We denote by 
\[\Sigma^r (\rho  : I \to [0])\]
the \defterm{$r$th Rezk map}. For $n \in \natsw$, we denote by $\Cat_n$ the localization of $\Cat_n^\inc$ at all of the Rezk maps. More generally, for $R \in \natsw$, we denote by 
\[\Cat_n^{\inc < R}\]
the localization of $\Cat_n^\inc$ at the maps $\Sigma^r \rho$ for $r \geq R$. For $R = 0$, we denote $\Cat_{\underline{d}} =  \Cat_n^{\inc < 0}$. In particular, $\Cat_n^{\inc < 0} = \Cat_n$ and $\Cat_n^{\inc < \omega} = \Cat_n^\inc$.

More generally, for $\underline{d}: [n] \to \Levw$ a multi-level, we denote by 
\[\Cat_{\underline{d}}^{\inc < R} = \Cat_{\underline{d}}^\inc \cap \Cat_n^{\inc < R}\]
the intersection.
\end{Def}

\begin{rmk}
Let $n, R \in \natsw$. The $\infty$-category $\Cat_n^{\inc < R}$ is an $\infty$-category of flagged $(\infty,n)$-categories which are Rezk-complete in dimensions $\geq R$, so that the flagging in higher dimensions is the canonical one.
\end{rmk}

\begin{eg}
let $n \in \nats$. Then $\Cat_n$ is canonically equivalent to the $\infty$-category of $(\infty,n)$-categories in the usual sense. And $\Cat_\omega$ is the limit of these along the forgetful functors. It is the $\infty$-category of $(\infty,\infty)$-categories (with inductive equivalences).
\end{eg}

\begin{eg}
Let $n \in \natsw$. Then $\Cat_{\const_0^n} = \sCat_n \cap \Cat_n$ is canonically equivalent to the category of \defterm{gaunt} $n$-categories.
\end{eg}

\begin{eg}
Let $m, n \in \nats$, and let $\underline{d}$ be defined as in \cref{eg:mninc}. Then $\Cat_{\underline{d}}$ is the $\infty$-category of $(m+n,n)$-categories in the usual sense.
\end{eg}

\begin{eg}
$\Cat_{0,-1}$ is the strict $1$-category of posets.
\end{eg}

\begin{Def}\label{def:coindloc}
Let $n \in \natsw$. Let $J$ be the walking coinductive equivalence (\cite[Def 4.20]{henry-loubaton}). Then for $c \in \nats$, the map 
\[\Sigma^c (\gamma : J \to [0])\]
is the \defterm{$c$th coinduction map}. For $C \in \nats$, the localization of $\Cat_n^\inc$ at the maps $\Sigma^c \gamma$ for $c \geq C$ is denoted 
\[\Cat_n^{\inc <^\coind C}.\]
For $C = 0$, we denote $\Cat_n^{\coind} = \Cat_n^{\inc <^\coind 0}$. More generally, for a multi-level $\underline d : [n] \to \Levw$, we denote $\Cat_{\underline d}^{\inc <^\coind C} = \Cat_{\underline d}^\inc \cap \Cat_n^{\inc <^\coind C}$, and for $C = 0$ we denote $\Cat_{\underline d}^\coind = \Cat_{\underline d}^{\inc <^\coind 0}$.
\end{Def}

\begin{eg}
$\Cat_\omega^{\inc <^\coind 0}$ is the $\infty$-category of $(\infty,\infty)$-categories with coinductive equivalences.
\end{eg}

\begin{obs}
The distinction between inductive and coinductive equivalences only arises when the category level is infinite. For $n, C \in \nats$, we have $\Cat_n^{\inc <^\coind C} = \Cat_n^{\inc < C}$; we will by default write $\Cat_n^{\inc < C}$ for this $\infty$-category, but we allow $\Cat_n^{\inc <^\coind C}$ notationally. Similarly, when $n$ is finite we have $\Cat_{\underline d}^{\inc <^\coind C} = \Cat_{\underline d}^{\inc < C}$ for any multi-level $\underline d : [n] \to \Levw$.
\end{obs}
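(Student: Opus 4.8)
The plan is to reduce everything to the plain (non-multi-level) statement with $n$ finite. The multi-level case is immediate from the defining intersections $\Cat_{\underline d}^{\inc <^\coind C} = \Cat_{\underline d}^\inc \cap \Cat_n^{\inc <^\coind C}$ and $\Cat_{\underline d}^{\inc < C} = \Cat_{\underline d}^\inc \cap \Cat_n^{\inc < C}$, so it is enough to show that for finite $n$ the localizations $L_{\{\Sigma^c\gamma \,\mid\, c \geq C\}}\Cat_n^\inc$ and $L_{\{\Sigma^r\rho \,\mid\, r \geq C\}}\Cat_n^\inc$ of $\Cat_n^\inc$ agree. First I would dispose of high dimensions: since $n$ is finite, the forgetful functor $\Cat_\omega^\inc \to \Cat_n^\inc$ only retains cells of dimension $\leq n$, and a direct cell count shows that for $d > n$ each of $\Sigma^d J$, $\Sigma^d I$, and $\globe_d = \Sigma^d[0]$ becomes the $n$-sphere $\partial\globe_{n+1}$ in $\Cat_n^\inc$, with $\Sigma^d\gamma$ and $\Sigma^d\rho$ becoming the identity; and likewise $\Sigma^n J$ and $\Sigma^n I$ both become $\partial\globe_{n+1}$, compatibly with the maps to $\globe_n$, so that $\Sigma^n\gamma = \Sigma^n\rho$ already in $\Cat_n^\inc$. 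Thus only dimensions $C \leq \cdot \leq n$ matter.

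The key step is: for $C \leq c < n$, in the localization $\calD_c := L_{\{\Sigma^r\rho \,\mid\, r > c\}}\Cat_n^\inc$ the maps $\Sigma^c\gamma$ and $\Sigma^c\rho$ become isomorphic as arrows over $\globe_c$. To see this, note first that $\gamma = \rho q$, where $q : J \to I$ is the canonical ``strictification'' sending the generating $1$-morphism of $J$ to the isomorphism of $I$ and collapsing the higher coinductive witnesses; so it suffices to see that the image of $\Sigma^c q$ in $\calD_c$ is an equivalence. In $\Cat_n^\inc$ the source $\Sigma^c J$ only retains its cells of dimension $\leq n$, so this image is $\Sigma^c\bar q$, where $\bar q : \bar J \to I$ is the truncation of $q$ and $\bar J$ is obtained from $J$ by discarding cells of dimension $> n - c$. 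Since the suspension functor preserves basic spine inclusions and sends $\Sigma^s\rho$ to $\Sigma^{s+c}\rho$, the composite $\Psh(\Theta_{n-c}) \to \Cat_n^\inc \to \calD_c$ inverts all the maps generating the localization $\Psh(\Theta_{n-c}) \to \Cat_{n-c}^{\inc < 1}$ and hence descends to a functor $\Cat_{n-c}^{\inc < 1} \to \calD_c$ carrying $\bar q$ to $\Sigma^c\bar q$. So it is enough to know that $\bar q$ is an equivalence in $\Cat_{n-c}^{\inc < 1}$ --- equivalently, that the walking coinductive equivalence, truncated to $n-c$ dimensions and made Rezk-complete in positive dimensions, is the walking isomorphism. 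This is the one genuinely nonformal ingredient, and I expect it to be the main obstacle: morally it holds because the recursive definition of a coinductive equivalence bottoms out at the top dimension $n-c$, where coinductive and inductive equivalences coincide, after which imposing Rezk-completeness downward dimension by dimension collapses all the coinductive data of $J$; concretely I would extract it from the comparison of (co)inductive equivalences in \cite{henry-loubaton}.

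Granting the key step, the statement follows by trading Rezk maps for coinduction maps one dimension at a time. For $C \leq k \leq n+1$ put $S_k = \{\Sigma^c\gamma \,\mid\, C \leq c < k\} \cup \{\Sigma^r\rho \,\mid\, r \geq k\}$. Then $L_{S_C}\Cat_n^\inc = \Cat_n^{\inc < C}$, while $L_{S_{n+1}}\Cat_n^\inc = \Cat_n^{\inc <^\coind C}$ by the first reduction (all $\Sigma^c\gamma$ and $\Sigma^r\rho$ with index exceeding $n$ being equivalences). For $C \leq k \leq n$ the sets $S_k$ and $S_{k+1}$ agree except that $S_k$ contains $\Sigma^k\rho$ where $S_{k+1}$ contains $\Sigma^k\gamma$, and their common part $T := S_k \setminus \{\Sigma^k\rho\} = S_{k+1} \setminus \{\Sigma^k\gamma\}$ contains $\{\Sigma^r\rho \,\mid\, r > k\}$; hence $L_T\Cat_n^\inc$ is a further localization of $\calD_k$, in which $\Sigma^k\gamma$ and $\Sigma^k\rho$ are isomorphic arrows by the key step, so $L_{S_k}\Cat_n^\inc = L_{\{\Sigma^k\rho\}}L_T\Cat_n^\inc = L_{\{\Sigma^k\gamma\}}L_T\Cat_n^\inc = L_{S_{k+1}}\Cat_n^\inc$. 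Chaining these equalities yields $\Cat_n^{\inc < C} = \Cat_n^{\inc <^\coind C}$. Beyond the input of \cite{henry-loubaton}, all that remains is routine: the cell counts in the first reduction, and checking that suspension and the forgetful functors are compatible with the Rezk localizations (which follows from suspension preserving spine inclusions, $\Sigma(\Sigma^r\rho) = \Sigma^{r+1}\rho$, and the nerve carrying suspensions to suspensions).
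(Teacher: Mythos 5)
The paper offers no proof of this Observation at all --- it is stated as a standard fact (that inductive and coinductive equivalences coincide in finite dimensions), so there is nothing to compare your argument against. Your overall architecture is sound and is what a careful proof would look like: factor $\gamma = \rho \circ q$ through the collapse $q : J \to I$, trade the generators $\Sigma^c\rho \leftrightarrow \Sigma^c\gamma$ one dimension at a time, and reduce everything to the single nonformal claim that $q$ becomes an equivalence after reflecting into $(n-c)$-categories that are Rezk-complete in positive dimensions. That reduction is correct, and you are right that this last claim is the crux and must come from the structure of $J$ as in \cite{henry-loubaton}; since you defer it rather than prove it, the proof is incomplete at exactly the point you flag.

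There is, however, a genuine error in your ``dispose of high dimensions'' step. The maps $\Sigma^d\rho$ and $\Sigma^d\gamma$ are transported into the reflective subcategory $\Cat_n^\inc \subseteq \Cat_\omega^\inc$ by the \emph{reflection} (left adjoint), not by restriction to $\Theta_n$ (right adjoint); an object $X \in \Cat_n^\inc$ is $f$-local iff it is $L_n(f)$-local, where $L_n$ is the localization functor. Under $L_n$ one has $\globe_d \mapsto \globe_n$ and $\Sigma^d I \mapsto \globe_n$ for $d \geq n$ (so the high Rezk maps are indeed vacuous), not $\partial\globe_{n+1}$. Your skeleton computation is the $i^\ast$ one, and if it were the relevant one then $\Sigma^n\rho$ would become $\partial\globe_{n+1} \to \globe_n$, i.e.\ $\Sigma^n t_{-1}$, and $\Cat_n^{\inc < n}$ would force $(-1)$-truncated top hom-spaces --- contradicting the paper's assertion that $\Cat_n^{\inc<0} = \Cat_n$ is the usual $\infty$-category of $(\infty,n)$-categories. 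More importantly, under the correct transport $L_n(\Sigma^d J) \simeq \Sigma^n(L_0 \Sigma^{d-n} J)$, so the vacuity of $\Sigma^d\gamma$ for $d \geq n$ is \emph{not} a cell count: it requires the contractibility of the classifying space $L_0 J = |J|$ of the walking coinductive equivalence (and of its suspensions). This is true and presumably extractable from the same source as your key step, but it is a second nonformal input that your proposal does not supply; as written, the first reduction does not go through.
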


\begin{rmk}
    In principle, all four parameters introduced here -- the category level, truncation level, Rezk completeness level, and coinductive completeness level -- could be indexed by a multi-level rather than just a level. The results of \cref{sec:sites} would also go through here. We do not rule out that these more general localizations might be interesting, but they will not be exponential ideals for the Gray tensor product except in the cases we have in fact considered, where the class of maps we localize at is stable under suspension (\cref{prop:susp-pres}).
\end{rmk}

\section{Large sites for $(\infty,n)$-categories}\label{sec:sites}
In this section, we introduce the notion of a \defterm{suitable site} $\calS$ (\cref{def:suitable}) of torsion-free complexes for modeling $(\infty,\infty)$-categories. For a suitable site $\calS$, we describe generators (\cref{def:j-square}) for a localization $L_\calS \Psh(\calS)$ of $\Psh(\calS)$, and we show (\cref{cor:locn}) that the canonical nerve $\Cat_\omega^\inc \to \Psh(\calS)$ induces an equivalence $\Cat_\omega^\inc \simeq L_\calS \Psh(\calS)$. We deduce (\cref{cor:locn'}) explicit presentations for the further localizations of $\Cat_\omega^\inc$ discussed in \cref{subsec:loc} above as presheaves of spaces on $\calS$.

The results of this section should be seen as similar in spirit to the work in \cite{bsp} on presheaves on $\Upsilon_n$, or to the work in \cite{henry-regular} on presheaves on regular polygraphs, in that we model $(\infty,n)$-categories as presheaves on a relatively large site of computads. A notable feature is that in this setting, there are no ``spine-filling" conditions -- the ``Segal conditions" here really are just the cell-gluing conditions of \cref{def:j-square}.

This section is short, but new: all of the real work in \cref{cor:locn} is done in \cite{campion-paste}.

\begin{Def}\label{def:suitable}
A \defterm{suitable site} $\TPar$ is a a set of finite torsion-free complexes \cite{forest}, closed under subcomplexes, such that $\Theta$ is contained in $\TPar$.
\end{Def}

\defterm{Torsion-free complexes} are a restricted class of computads (i.e. free strict $\omega$-categories) similar to Street's \defterm{parity complexes} \cite{street}. We refer to \cite{forest} for details, and to \cite[Secion 1]{campion-paste} for a brief overview. Here let it suffice to say that every loop-free augmented directed complex (\cite{steiner}) is a torsion-free complex \cite[Thm 3.4.4.22]{forest-thesis}, and in particular every object of $\Theta$, each of Street's orientals \cite{street}, and each Gray cube is a torsion-free complex, as are all subcomplexes of these (a subcomplex is a subcategory whose inclusion sends free generators to free generators).

\begin{eg} $~$
\begin{itemize}
    \item The category $\TPar$ of all torsion-free complexes is a suitable site.
    \item The category of loop-free Steiner complexes is a suitable site \cite[Thm 3.4.4.22]{forest-thesis}.
    \item The category of strongly loop-free Steiner complexes is a suitable site.
    \item The closure of $\Theta$ under subcomplexes is a suitable site.
    \item The closure under subcomplexes of the union of $\Theta$ and the lax Gray cubes is a suitable site.
\end{itemize}
And so forth.
\end{eg}

\begin{rmk}\label{rmk:handbasket}
It would be desirable to weaken the assumption that $\Theta \subseteq \TPar$ to the assumption that $\Theta$ is contained in the idempotent completion of $\TPar$, so as to include the case where $\TPar$ comprises the Gray cubes (cf. \cite{campion-dense}). We are not sure if this is possible, ultimately in part because we do not know whether torsion-free complexes are closed under retracts. Note that computads are closed under retracts \cite[Thm 7.1]{metayer}. 
\end{rmk}

\begin{Def}\label{def:j-square}
Let $\TPar$ be a suitable site. Let $J_\TPar \subset \Mor(\Psh(\TPar))$ comprise, for each $G \hookleftarrow S \rightarrowtail A$ in $\TPar$ such that $A \rightarrowtail S$ is a monomorphism, $S \hookrightarrow G = \partial \globe^m \to \globe^m$ is a generating folk cofibration, and $A \ast_S G$ is in $\TPar$, the morphism 
\[\yo A \cup_{\yo S} \yo G \to \yo(A \ast_S G).\]
We write $L_\TPar \Psh(\TPar)$ for the localization with respect to $J_\TPar$, and $L_\TPar$ the localization functor.
\end{Def}

\begin{prop}\label{prop:i-cube-good}
Let $\TPar$ be a suitable site. Every object of $\Cat_\omega^\inc$ is local with respect to $J_\TPar$.
\end{prop}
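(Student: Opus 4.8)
The plan is to unwind the definition of $J_\calS$-locality via the Yoneda lemma and reduce the claim to the assertion that the nerve functor $\nerve \colon \sCat_\omega \to \Cat_\omega^\inc$ of \cref{def:nerve} carries the cell-attachment pushout squares of \cref{def:j-square} to pushout squares in $\Cat_\omega^\inc$; that assertion is exactly what the pasting theorem of \cite{campion-paste} is designed to supply.

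In detail: write $\iota \colon \calS \to \Cat_\omega^\inc$ for the restriction of $\nerve$ (which lands in $\Cat_\omega^\inc$ by \cref{def:nerve}), so that the canonical nerve $\Cat_\omega^\inc \to \Psh(\calS)$ is $\nerve_\calS C = \Map_{\Cat_\omega^\inc}(\iota(-),\,C)$. Every object occurring in a generator of $J_\calS$ lies in $\calS$ — namely $A$ and $A \ast_S G$ by hypothesis, $\globe^m \in \Theta \subseteq \calS$, and $\partial\globe^m$ as a subcomplex of $\globe^m$ — so these nerves make sense. Fix a generator $j \colon \yo A \cup_{\yo S} \yo G \to \yo(A \ast_S G)$ of $J_\calS$ and an object $C \in \Cat_\omega^\inc$; we must show $\nerve_\calS C$ inverts $j$. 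By the Yoneda lemma, the definition of $\nerve_\calS$, and the fact that mapping out of a pushout is the corresponding pullback of mapping spaces, the map $\Map_{\Psh(\calS)}(j,\,\nerve_\calS C)$ is identified with
\[
\Map_{\Cat_\omega^\inc}\bigl(\iota(A \ast_S G),\, C\bigr) \longrightarrow \Map_{\Cat_\omega^\inc}\bigl(\iota A \cup_{\iota S} \iota G,\, C\bigr),
\]
given by precomposition with the canonical comparison map $\psi_j \colon \iota A \cup_{\iota S} \iota G \to \iota(A \ast_S G)$, the pushout now being formed in $\Cat_\omega^\inc$. Hence it suffices to prove that every such $\psi_j$ is an equivalence; then $\Map_{\Cat_\omega^\inc}(-,\,C)$ carries it to an equivalence for each $C$, and the proposition follows.

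That $\psi_j$ is an equivalence is exactly the statement that $\nerve$ sends the pushout square
\[
\begin{tikzcd}
\partial\globe^m \ar[r, hook] \ar[d, tail] & \globe^m \ar[d, tail] \\
A \ar[r, hook] & A \ast_S G
\end{tikzcd}
\]
in $\sCat_\omega$ to a pushout square in $\Cat_\omega^\inc$. This is the content of the pasting theorem of \cite{campion-paste} applied to this square: $A$ and $A \ast_S G$ are torsion-free complexes, $\partial\globe^m \hookrightarrow \globe^m$ is a generating folk cofibration, and $\partial\globe^m \to A$ is a monomorphism, and under these hypotheses \cite{campion-paste} shows that the cell-attachment pushout is preserved by $\sCat_\omega \to \Cat_\omega^\inc$.

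I expect the only genuine work to be bookkeeping: matching the hypotheses of the pasting theorem of \cite{campion-paste} with the precise form of the generators in \cref{def:j-square}. In particular one should check that torsion-freeness of $A \ast_S G$ — forced by its membership in $\calS$ — is precisely what the theorem requires, so that no constraint relating the dimensions of the cells of $A$ to $m$ is needed; that it is harmless to take the monomorphism $\partial\globe^m \to A$ to be a subcomplex inclusion; and that the theorem is available for $\Cat_\omega^\inc$ itself (rather than only for a further localization such as $\Cat_\omega$). With that in hand the proposition is immediate, all of the real mathematical content being imported from \cite{campion-paste}.
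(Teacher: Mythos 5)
Your proposal is correct and is exactly the paper's argument, just spelled out in more detail: the paper's proof says only that each morphism of $J_\calS$ sketches a pushout which, by the main result of \cite{campion-paste}, is also a pushout in $\Cat_\omega^\inc$, which is precisely your reduction to the comparison map $\psi_j$ being an equivalence. The bookkeeping you flag (matching the generators of \cref{def:j-square} to the hypotheses of the pasting theorem) is indeed all that remains, and the paper leaves it implicit.
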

\begin{proof}
Each morphism of $J_\TPar$ sketches a certain pushout. The main result of \cite{campion-paste} tells us that this is also a pushout in $\Cat_\omega^\inc$.
\end{proof}

\begin{prop}\label{prop:strong}
Let $\TPar$ be a suitable site. In $L_\TPar \Psh(\TPar)$, the globe category $\globe$ is a colimit-generator. 
\end{prop}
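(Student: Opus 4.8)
The plan is to show that the colimit-closure $\langle\globe\rangle$ of the set of globes $\{\globe_m \mid m \in \nats\}$ inside $L_\TPar \Psh(\TPar)$ is the whole category. The first reduction is to representables: the localization functor $L_\TPar \colon \Psh(\TPar) \to L_\TPar\Psh(\TPar)$ is cocontinuous (as a left adjoint) and essentially surjective (being a reflective Bousfield localization), and in $\Psh(\TPar)$ every object is a colimit of representables $\yo A$, $A \in \TPar$; hence every object of $L_\TPar\Psh(\TPar)$ is a colimit of objects $L_\TPar\yo A$. So it suffices to show $L_\TPar\yo A \in \langle\globe\rangle$ for each $A \in \TPar$.

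I would prove this by induction on the number $N$ of generating cells of the torsion-free complex $A$. For the inductive step, since $A$ is a computad I would choose a generating cell $a$ of maximal dimension $m$ and let $A' \subseteq A$ be the subcomputad generated by the remaining cells, so that $A = A' \ast_{\partial\globe_m} \globe_m$ in $\sCat_\omega$ (the cell $a$ glued to $A'$ along its boundary). As $\TPar$ is closed under subcomplexes, $A' \in \TPar$, and it has $N-1$ generating cells; the attaching map $\partial\globe_m \to A'$ is a subcomplex inclusion (this is the one place I appeal to the structure theory of torsion-free complexes of \cite{forest}), in particular a monomorphism, and since its image is a subcomplex of $A'$ we also get $\partial\globe_m \in \TPar$ with at most $N-1$ generating cells. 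Hence $\yo A' \cup_{\yo\partial\globe_m} \yo\globe_m \to \yo A$ is one of the generating morphisms of $J_\TPar$ (\cref{def:j-square}) and is inverted by $L_\TPar$, so (as $L_\TPar$ is cocontinuous)
\[
L_\TPar\yo A \;\simeq\; L_\TPar\yo A' \;\cup_{L_\TPar\yo\partial\globe_m}\; L_\TPar\yo\globe_m .
\]
Here $L_\TPar\yo\globe_m = \globe_m \in \langle\globe\rangle$, while $L_\TPar\yo A'$ and $L_\TPar\yo\partial\globe_m$ lie in $\langle\globe\rangle$ by the inductive hypothesis (both complexes having fewer than $N$ generating cells), and $\langle\globe\rangle$ is closed under pushouts, so $L_\TPar\yo A \in \langle\globe\rangle$. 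The base cases, in which $A$ has no cells of positive dimension, are handled directly: such an $A$ is a finite coproduct of copies of $\globe_0$, obtained by iterating the dimension-$0$ cell-attachment morphisms of $J_\TPar$, so $L_\TPar\yo A$ is a finite coproduct of copies of $\globe_0$ in $L_\TPar\Psh(\TPar)$, hence in $\langle\globe\rangle$.

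I expect the real obstacle to be exactly the structural input used above: that every finite torsion-free complex admits an exhaustion by subcomplexes (each therefore again in $\TPar$) along which cells are attached one at a time via subcomplex inclusions $\partial\globe_m \hookrightarrow A'$, so that the comparison maps that arise genuinely lie in $J_\TPar$. Granting this, everything else is formal manipulation of the cocontinuous localization $L_\TPar$. (One could alternatively read the statement off the identification $\Cat_\omega^\inc \simeq L_\TPar\Psh(\TPar)$ of \cref{cor:locn} together with the density of $\square$ --- hence of $\globe$ --- in $\Cat_\omega^\inc$ from \cite{campion-dense}; but that identification is established only later, whereas the cell-attachment argument above is self-contained within the present section.)
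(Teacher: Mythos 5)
Your proof is correct and takes essentially the same route as the paper's (which is a two-line version of the same argument: representables generate the localization under colimits, and the morphisms of $J_\TPar$ exhibit each representable as an iterated colimit of globes); your induction on the number of generating cells is just the paper's second step made explicit. One small correction: the attaching map $\partial\globe_m \to A'$ of a top-dimensional generator is generally \emph{not} a subcomplex inclusion, since it sends the generating $(m-1)$-cells of $\partial\globe_m$ to \emph{composites} of generators of $A'$ (e.g.\ for the $2$-oriental, one generating $1$-cell of $\partial\globe_2$ goes to the composite of the two generating $1$-cells). What \cref{def:j-square} actually requires is only that this map be a monomorphism, which is the structural input from \cite{forest} you correctly flag; and $\partial\globe_m \in \TPar$ follows directly from $\partial\globe_m$ being a subcomplex of $\globe_m \in \Theta \subseteq \TPar$, not from anything about its image in $A'$.
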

\begin{proof}
The localization is subcanonical - the representables generate everything under colimits. Moreover, the morphisms of $J_\TPar$ ensure that every representable becomes an iterated colimit of globes in $L_\TPar \Psh(\TPar)$. Thus the globes are a colimit-generator.
\end{proof}

\begin{lem}\label{lem:criteria}
Let $A \subseteq \Gaunt$. Suppose that $\Theta$ is contained in the idempotent completion of $A$, so that $A$ is dense in $\Cat_\omega^\inc$, and we may regard $\Cat_\omega^\inc$ canonically as a full subcategory of $\Psh(A)$. Let $L \Psh(A)$ be an accessible localization of $\Psh(A)$, with localization functor $L$. Suppose that $\Cat_\omega^\inc \subseteq L \Psh(A)$, and that $\Theta$ generates $L \Psh(A)$ under colimits. Then the following are equivalent:
\begin{enumerate}
    \item $L\Psh(A) = \Cat_\omega^\inc$;
    \item The restriction functor $\iota^\ast : \Psh(A) \to \Psh(\Theta)$ carries generating $L$-acyclic maps to $L_{\Cat_\omega^\inc}$-acyclic maps, and for every $X \in L\Psh(A)$, the restriction $\iota^\ast X$ of $X$ to a presheaf on $\Theta$ lies in $\Cat_\omega^\inc \subset \Psh(\Theta)$.
\end{enumerate}
\end{lem}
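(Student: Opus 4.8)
The plan is to prove $(1) \Rightarrow (2)$ by a routine unwinding and $(2) \Rightarrow (1)$ by a comparison of localizations built from the two hypotheses in (2). Throughout, write $\iota : \Theta \hookrightarrow A$ for the inclusion, $\iota^* : \Psh(A) \to \Psh(\Theta)$ for restriction, and $\iota_! \dashv \iota^* \dashv \iota_*$ for the induced adjoint triple; recall that $\iota_!$ is fully faithful (left Kan extension along a fully faithful functor), so we may and do regard $\Psh(\Theta) \subseteq \Psh(A)$ via $\iota_!$, and $\Cat_\omega^\inc \subseteq \Psh(\Theta) \subseteq \Psh(A)$. The direction $(1) \Rightarrow (2)$ is then nearly immediate: if $L\Psh(A) = \Cat_\omega^\inc$, then for $X \in L\Psh(A) = \Cat_\omega^\inc$ we have $\iota^* X \in \Cat_\omega^\inc \subseteq \Psh(\Theta)$ because $\Cat_\omega^\inc$ is a full subcategory of $\Psh(\Theta)$ closed under the relevant restrictions (this is just the statement that the $\Theta$-space underlying a flagged $(\infty,\infty)$-category, via $\Theta \subseteq A$, is the one computing it — which holds since $\Theta$ is dense in $\Cat_\omega^\inc$); and a generating $L$-acyclic map $f$ becomes an equivalence after localization at $L = L_{\Cat_\omega^\inc}$, hence $\iota^* f$ becomes an equivalence after localizing $\Psh(\Theta)$ at the spine inclusions, i.e. $\iota^* f$ is $L_{\Cat_\omega^\inc}$-acyclic, using that $\iota^*$ preserves colimits and that it sends the unit/counit data witnessing acyclicity through the localization.

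For the substantive direction $(2) \Rightarrow (1)$, I would argue that the two inclusions $\Cat_\omega^\inc \subseteq L\Psh(A)$ and $L\Psh(A) \subseteq \Psh(A)$ are reflective subcategories whose unit maps generate the same class of local equivalences, so they coincide. Concretely: we are given $\Cat_\omega^\inc \subseteq L\Psh(A)$, so it suffices to prove the reverse inclusion $L\Psh(A) \subseteq \Cat_\omega^\inc$, i.e. that every $X \in L\Psh(A)$ is $L_{\Cat_\omega^\inc}$-local in $\Psh(A)$ — here I mean local with respect to the class $W$ of maps in $\Psh(A)$ that $\iota^*$ sends to spine-local equivalences in $\Psh(\Theta)$, whose local objects (given that $\Theta$ generates $L\Psh(A)$ under colimits, so in particular $\Psh(A)$) are exactly $\Cat_\omega^\inc$ viewed inside $\Psh(A)$. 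So the target is: show $W \subseteq \{L\text{-acyclic maps}\}$, equivalently that every $W$-local object is $L$-local, which combined with $\Cat_\omega^\inc \subseteq L\Psh(A)$ (the $L$-local objects all lie in the $W$-local objects) forces $L\Psh(A) = \Cat_\omega^\inc$. Here is where the two clauses of (2) enter. The first clause — $\iota^*$ carries generating $L$-acyclic maps to $L_{\Cat_\omega^\inc}$-acyclic maps — says precisely that the generating $L$-acyclic maps lie in $W$, hence (taking the saturation, and using that $W$ is strongly saturated because it is the preimage under the colimit-preserving functor $\iota^*$ of the strongly saturated class of spine-equivalences) that every $L$-acyclic map is in $W$; thus every $W$-local object is $L$-local, giving $\Cat_\omega^\inc \subseteq L\Psh(A)$ again — but more usefully it gives a localization functor comparison $L\Psh(A) \to \Cat_\omega^\inc$. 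The second clause — for $X \in L\Psh(A)$, $\iota^* X \in \Cat_\omega^\inc \subseteq \Psh(\Theta)$ — is then used to show this comparison is an equivalence: I would show that for $X \in L\Psh(A)$, $X$ is already $W$-local, i.e. $L_{\Cat_\omega^\inc}$-local in $\Psh(A)$. Indeed, take a map $f : Y \to Z$ in $W$; we must show $\Map(Z, X) \to \Map(Y,X)$ is an equivalence. Using that $\Theta$ generates $L\Psh(A)$ under colimits and $X \in L\Psh(A)$, mapping into $X$ only sees the $\Theta$-indexed data of $Y$ and $Z$, i.e. $\Map_{\Psh(A)}(-, X)$ factors through $\iota^*$ up to the unit — more precisely, since $X \simeq LX$ and $L$ is a localization at maps which (by clause one) are $W$-equivalences, and $\iota^* X \in \Cat_\omega^\inc$ by clause two so $X$ is "determined by its restriction", the map $\Map(Z,X) \to \Map(Y,X)$ is identified with $\Map_{\Psh(\Theta)}(\iota^* Z, \iota^* X) \to \Map_{\Psh(\Theta)}(\iota^* Y, \iota^* X)$, which is an equivalence because $\iota^* f$ is a spine-equivalence and $\iota^* X \in \Cat_\omega^\inc$ is spine-local. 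Hence $L\Psh(A)$ consists of $W$-local objects, so $L\Psh(A) \subseteq \Cat_\omega^\inc$, and with the given reverse inclusion we conclude $L\Psh(A) = \Cat_\omega^\inc$.

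The step I expect to be the main obstacle is making rigorous the claim that, for $X \in L\Psh(A)$, mapping into $X$ "only sees the $\Theta$-data", i.e. the natural identification $\Map_{\Psh(A)}(Y, X) \simeq \Map_{\Psh(\Theta)}(\iota^* Y, \iota^* X)$ for all $Y$ — this is false for general $X$ and uses both clauses of (2) in an intertwined way (clause two to know $\iota^* X$ is spine-local and clause one to know the $L$-localization does not disturb $\iota^*$). The cleanest route is probably not to prove that identification verbatim but to argue at the level of localizations: let $W' \subseteq \Mor\Psh(A)$ be the strong saturation of the generating $L$-acyclic maps together with the maps $\iota_! \iota^* C \to C$ for $C$ ranging over a generating set, set up both $L$ and $L_{\Cat_\omega^\inc}$ as localizations at explicitly compared generating sets, and invoke that a localization is determined by its class of local objects. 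I would spend the bulk of the write-up pinning down that $W$ (the preimage of spine-equivalences under $\iota^*$) has $\Cat_\omega^\inc$ as its exact class of local objects inside $\Psh(A)$ — this needs $\Theta$-density of $\Cat_\omega^\inc$ and the fact that $\Theta$ generates $L\Psh(A)$, hence $\Psh(A)$, under colimits — and then the equality of local objects $\{L\text{-local}\} = \{W\text{-local}\}$ follows from the two clauses of (2) as the "$\subseteq$" and "$\supseteq$" directions respectively.
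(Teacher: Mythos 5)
Your direction $(1)\Rightarrow(2)$ is fine, and your reduction of $(2)\Rightarrow(1)$ to an equality of classes of local objects $\{L\text{-local}\}=\{W\text{-local}\}$, where $W=(\iota^*)^{-1}(\text{spine-local equivalences})$, is a reasonable framing; the inclusion $\{W\text{-local}\}\subseteq\{L\text{-local}\}$ (i.e.\ $\Cat_\omega^\inc\subseteq L\Psh(A)$) deduced from clause one is correct. The gap is in the reverse inclusion, and it is exactly the step you flag as ``the main obstacle'': your final paragraph asserts that $\{L\text{-local}\}\subseteq\{W\text{-local}\}$ follows from clause two, but it does not. An object $X$ is $W$-local iff $\iota^*X$ is spine-local \emph{and} the unit $X\to\iota_*\iota^*X$ is an equivalence; clause two gives only the first condition. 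Your proposed repair --- generating $W$ by the pushed-forward spine inclusions together with the counits $\iota_!\iota^*C\to C$ --- only relocates the problem: one must still show those counit maps are $L$-acyclic, i.e.\ that every $L$-local $X$ satisfies $X(a)\simeq\Map_{\Psh(\Theta)}(\iota^*\yo a,\iota^*X)$ for $a\in A$, and neither clause of (2) addresses this. (Also, the parenthetical ``$\Theta$ generates $L\Psh(A)$, hence $\Psh(A)$, under colimits'' is false --- $\Psh(A)$ is generated by $A$, not by $\Theta$ --- and identifying the $W$-local objects with $\Cat_\omega^\inc$ needs only the density of $A$ in $\Cat_\omega^\inc$, not the colimit-generation hypothesis.)

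The missing ingredient is the hypothesis that $\Theta$ generates $L\Psh(A)$ under colimits, applied where it actually bites. It implies that the objects $\yo\theta$ ($\theta\in\Theta$) are jointly conservative in $L\Psh(A)$; since $\Map_{\Psh(A)}(\yo\theta,X)=(\iota^*X)(\theta)$, this says $\iota^*$ is conservative on $L$-local objects. Now for $X\in L\Psh(A)$ the unit $X\to\iota_*\iota^*X$ is a map of $L$-local objects (its target lies in $\Cat_\omega^\inc\subseteq L\Psh(A)$ because $\iota^*X$ is spine-local by clause two) which $\iota^*$ inverts, hence it is an equivalence, giving $X\in\Cat_\omega^\inc$. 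This is in substance the paper's argument: the two clauses of (2) let the adjunction $\iota^*\dashv\iota_*$ restrict to an adjunction $i^*:L\Psh(A)\rightleftarrows\Cat_\omega^\inc:i_*$ with $i_*$ fully faithful, and colimit-generation by $\Theta$ supplies the remaining half of the equivalence.
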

\begin{proof}
$(1) \Rightarrow (2)$ is trivial.

$(2) \Rightarrow (1)$: Note that the restriction functor $\iota^\ast : \Psh(A) \to \Psh(\Theta)$ has both a left adjoint $\iota_!$ and a right adjoint $\iota_\ast$. By hypothesis, $\iota^\ast$ restricts to a functor $i^\ast: L\Psh(A) \to L_{\Cat_\omega^\inc} \Psh(\Theta)$. Moreover, by hypothesis $\iota^\ast$ carries $L_{\Cat_\omega^\inc}$-acyclic maps to $L$-acyclic maps, so by adjunction $\iota_\ast$ also restricts to a functor $i_\ast : L_{\Cat_\omega^\inc} \Psh(\Theta) \to L\Psh(A)$, right adjoint to the fully faithful $i^\ast$. Now the essential image of $i^\ast$ is closed under colimits and contains $\Theta$. By hypothesis, $\Theta$ generates $L\Psh(A)$ under colimits, so it is all of $L\Psh(A)$. Thus $i^\ast$ is essentially surjective, and hence an equivalence as desired.
\end{proof}


\begin{thm}\label{cor:locn}
Let $\TPar$ be a suitable site. Then $L_\TPar \Psh(\TPar) = \Cat_\omega^\inc$.
\end{thm}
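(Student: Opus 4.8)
The plan is to deduce the equality from \cref{lem:criteria}, applied with $A = \TPar$. This is legitimate: the torsion-free complexes are gaunt, so $\TPar \subseteq \Gaunt$, and since $\Theta \subseteq \TPar$ by the definition of a suitable site, $\Theta$ lies in the idempotent completion of $\TPar$, so $\TPar$ is dense in $\Cat_\omega^\inc$ and we may regard $\Cat_\omega^\inc$ as a full subcategory of $\Psh(\TPar)$ via the nerve. The localization $L_\TPar\Psh(\TPar)$ is accessible, being the localization at the small set $J_\TPar$. The two remaining standing hypotheses of \cref{lem:criteria} are supplied by the preceding propositions: $\Cat_\omega^\inc \subseteq L_\TPar\Psh(\TPar)$ by \cref{prop:i-cube-good}, and (since $\globe \subseteq \Theta$) $\Theta$ generates $L_\TPar\Psh(\TPar)$ under colimits by \cref{prop:strong}. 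Thus it remains only to verify condition (2) of \cref{lem:criteria} for the restriction functor $\iota^\ast : \Psh(\TPar) \to \Psh(\Theta)$ along the inclusion $\iota : \Theta \hookrightarrow \TPar$.

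The first half of condition (2) is formal. Since $\iota^\ast$ has the right adjoint $\iota_\ast$, it preserves all colimits; and it carries a representable $\yo_\TPar C$ with $C \in \TPar$ to the nerve $\nerve C$, because $\iota^\ast(\yo_\TPar C)(\theta) = \Hom_{\sCat_\omega}(\theta, C) = (\nerve C)(\theta)$ for $\theta \in \Theta$. Hence a generator $\yo_\TPar A \cup_{\yo_\TPar S} \yo_\TPar G \to \yo_\TPar(A \ast_S G)$ of $J_\TPar$ is sent by $\iota^\ast$ to the canonical comparison map $\nerve A \cup_{\nerve S} \nerve G \to \nerve(A \ast_S G)$ in $\Psh(\Theta)$, from the pushout of $\nerve A \leftarrow \nerve S \to \nerve G$ to the nerve of the corresponding pushout in $\sCat_\omega$. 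All four of $\nerve A, \nerve S, \nerve G, \nerve(A \ast_S G)$ already lie in $\Cat_\omega^\inc$ (\cref{def:nerve}), and the main theorem of \cite{campion-paste} — the input already used in \cref{prop:i-cube-good} — says precisely that $\nerve(A \ast_S G)$ is the pushout of $\nerve A \leftarrow \nerve S \to \nerve G$ computed in $\Cat_\omega^\inc$. So this comparison map becomes an equivalence after applying $L_{\Cat_\omega^\inc}$; that is, it is $L_{\Cat_\omega^\inc}$-acyclic.

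The substance is in the second half: for $X \in L_\TPar\Psh(\TPar)$ we must show $\iota^\ast X$ is local with respect to the basic spine inclusions, equivalently (by \cref{lem:same-loc}) the basic wedge inclusions. By the adjunction $\iota_! \dashv \iota^\ast$, the presheaf $\iota^\ast X$ is $w$-local if and only if $X$ is $(\iota_! w)$-local; and since $\iota_!$ preserves colimits and sends representables to representables — and every strict $\omega$-category occurring in a basic wedge inclusion, namely $\Sigma^k\theta$, $\Sigma^k\zeta$, and $\Sigma^k(\theta \vee^{\sCat_\omega}\zeta)$, already lies in $\Theta \subseteq \TPar$ — the map $\iota_! w$ is simply ``the same'' wedge inclusion formed in $\Psh(\TPar)$. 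It therefore suffices to show each such map is $L_\TPar$-acyclic. For this one uses that $\Sigma^k(\theta \vee^{\sCat_\omega}\zeta)$, being a torsion-free complex, can be built cell by cell through a filtration by subcomplexes, with each stage obtained from the previous by a cell attachment along a subcomplex and itself lying in $\TPar$ (so each attachment is a generator of $J_\TPar$), and in such a way that an initial segment of this filtration already realizes the presheaf pushout $\iota_!\bigl(\tilde\Sigma^k(\yo\theta \vee^{\Psh(\Theta)}\yo\zeta)\bigr)$. Chaining the resulting $J_\TPar$-acyclic maps exhibits $\iota_! w$ as a composite of such maps, so the $J_\TPar$-local presheaf $X$ is $(\iota_! w)$-local; hence $\iota^\ast X \in \Cat_\omega^\inc$.

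Having verified condition (2), \cref{lem:criteria} yields $L_\TPar\Psh(\TPar) = \Cat_\omega^\inc$. The only genuinely non-formal ingredient — and hence the main obstacle — is the combinatorial claim in the previous paragraph: that the cellular filtration of a $\Theta$-object realizing its spine or wedge decomposition can be arranged to pass entirely through torsion-free complexes, with subcomplex attaching maps, so that the spine and wedge inclusions are honestly generated by $J_\TPar$. This is exactly where the structure theory of torsion-free complexes of \cite{forest} and the pasting theorem of \cite{campion-paste} do the work; everything else is formal manipulation of the adjoint triple $\iota_! \dashv \iota^\ast \dashv \iota_\ast$ and the two localizations.
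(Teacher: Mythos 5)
Your proposal is correct and follows essentially the same route as the paper: both verify the hypotheses of \cref{lem:criteria}, use \cref{prop:i-cube-good} and \cref{prop:strong} for the standing assumptions, invoke the main theorem of \cite{campion-paste} to see that $\iota^\ast$ carries $J_\TPar$ to $L_{\Cat_\omega^\inc}$-acyclic maps, and finish by showing the spine/wedge inclusions are $L_\TPar$-acyclic via a skeletal cell-attachment induction. Your write-up is merely more explicit about the $\iota_!\dashv\iota^\ast$ transfer and about where the combinatorial content lies (the paper compresses that last step to ``easily seen by induction on skeleta'').
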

\begin{proof}
We verify the hypotheses of \cref{lem:criteria}. As noted, we have $\Cat_\omega^\inc \subseteq L_\TPar \Psh(\TPar)$. $\Theta$ is a colimit-generator in $L_{\TPar}\Psh(A)$ by \cref{prop:strong}. So it remains to check that the restriction functor $\iota^\ast : \Psh(\TPar) \to \Psh(\Theta)$ carries $J_\TPar$ to $L_{\Cat_\omega^\inc}$-acyclic morphisms and carries objects of $L\Psh(\TPar)$ to flagged $(\infty,\infty)$-categories.

As $\iota^\ast$ preserves colimits, it carries the morphisms of $J_\TPar$ in $\Psh(A)$ to their counterparts in $\Psh(\Theta)$, which are $L_{\Cat_\omega^\inc}$-acyclic by the main result of \cite{campion-paste}.

To see that $\iota^\ast$ carries any $X \in L\Psh(A)$ to a flagged $(\infty,\infty)$-category, it suffices to observe that the spine inclusions (which generate the localization for $\Cat_\omega^\inc$) are easily seen (by induction on skeleta) to be $L_\TPar$-acyclic.
\end{proof}

\begin{cor}\label{cor:locn'}
Let $\TPar$ be a suitable site. Let $n \in \natsw$, let $\underline{d} : [n] \to \Levw$ be a multi-level, and let $R, C \in \natsw$. Then $\Cat_{\underline{d}}^{\inc < R}$ and $\Cat_{\underline{d}}^{\inc <^\coind C}$ are canonically equivalent to the localizations of $\Psh(\TPar)$ at $J_\TPar$ plus the appropriate maps from \cref{def:rezkloc} and \cref{def:coindloc}.
\end{cor}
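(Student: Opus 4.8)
The plan is to deduce this from \cref{cor:locn} via the soft general principle that an iterated localization at small sets of maps is a localization at the union of the generating sets. First I would record that principle: if $\calC$ is a presentable $\infty$-category, $W_0 \subseteq \Mor(\calC)$ is a small set with reflective localization $\calC_0 = L_{W_0}\calC \hookrightarrow \calC$, and $W_1 \subseteq \Mor(\calC_0)$ is a further small set with localization $\calC_1 = L_{W_1}\calC_0 \hookrightarrow \calC_0$, then, regarding $W_1$ as a set of morphisms of $\calC$ via the fully faithful inclusion $\calC_0 \hookrightarrow \calC$, one has $\calC_1 = L_{W_0 \cup W_1}\calC$ as reflective subcategories of $\calC$. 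The justification is one line: an object $X \in \calC$ is $(W_0 \cup W_1)$-local iff it is $W_0$-local, hence lies in $\calC_0$, and is $W_1$-local, and for $X \in \calC_0$ and $f \in W_1$ the condition of being $f$-local is computed by the same mapping space whether formed in $\calC$ or in the full subcategory $\calC_0$. The same argument shows that an intersection $\calA \cap \calB$ of two reflective localizations $\calA = L_{W_A}\calC$, $\calB = L_{W_B}\calC$ is $L_{W_A \cup W_B}\calC$.

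Next I would observe, reading off \cref{def:cat-lev}, \cref{def:truncloc}, \cref{def:rezkloc}, and \cref{def:coindloc} (and using the intersection remark above), that each of $\Cat_{\underline d}^{\inc < R}$ and $\Cat_{\underline d}^{\inc <^\coind C}$ is obtained from $\Cat_\omega^\inc$ as the localization at a single small set $W$ of morphisms of $\Cat_\omega^\inc$: the union of the category-level maps $\Sigma^k p$ for $k \geq n$ (vacuous when $n = \omega$), the truncation maps $\Sigma^m t_{d(m)}$ for $m \in [n]$, and either the Rezk maps $\Sigma^r \rho$ for $r \geq R$ or the coinduction maps $\Sigma^c \gamma$ for $c \geq C$. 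Then I would invoke \cref{cor:locn}, which identifies $\Cat_\omega^\inc$ with the reflective localization $L_{J_\TPar}\Psh(\TPar) \hookrightarrow \Psh(\TPar)$. Applying the general principle with $\calC = \Psh(\TPar)$, $W_0 = J_\TPar$, $\calC_0 = \Cat_\omega^\inc$, and $W_1 = W$, one concludes that $\Cat_{\underline d}^{\inc < R}$, resp. $\Cat_{\underline d}^{\inc <^\coind C}$, is the localization of $\Psh(\TPar)$ at $J_\TPar$ together with the maps of $W$, which is exactly the asserted presentation. Every localization in sight is at a small set of maps in a presentable $\infty$-category, hence accessible, so there are no set-theoretic issues, and nothing depends on the choice of suitable site beyond the input \cref{cor:locn}.

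The only genuinely delicate point is interpretive rather than mathematical, and I would address it explicitly to forestall any appearance of circularity: the maps $t_d$, $\rho$, $\gamma$ and their suspensions are morphisms of $\Cat_\omega^\inc$ between objects — spheres, the walking isomorphism $I$, the walking coinductive equivalence $J$ — that are not torsion-free complexes, hence have no representing objects in $\TPar$ and are not maps of representables. So ``the appropriate maps from \cref{def:rezkloc} and \cref{def:coindloc}'' must be read as the images of these maps in $\Psh(\TPar)$ under the fully faithful inclusion $\Cat_\omega^\inc \simeq L_{J_\TPar}\Psh(\TPar) \hookrightarrow \Psh(\TPar)$ supplied by \cref{cor:locn}; equivalently, one localizes $\Psh(\TPar)$ in two stages, first at $J_\TPar$ and then at these maps. (By contrast $\globe_m \in \Theta \subseteq \TPar$, so the category-level maps $\Sigma^k p$ and the \emph{targets} $\globe_m$ of the truncation, Rezk, and coinduction maps are genuinely representable; only the domains of the latter are not.) This is the step I expect to require the most care in the writeup, though it is purely a matter of bookkeeping, not of content.
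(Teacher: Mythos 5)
Your proposal is correct and matches the paper's (implicit) argument: the paper states \cref{cor:locn'} without proof, treating it as an immediate consequence of \cref{cor:locn} via exactly the iterated-localization principle you spell out. Your explicit handling of the bookkeeping — that the truncation, Rezk, and coinduction maps live in $\Cat_\omega^\inc \simeq L_\TPar\Psh(\TPar) \subseteq \Psh(\TPar)$ rather than being maps of representables — is a sound and worthwhile clarification of what the paper leaves tacit.
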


\begin{eg}
    Let $n \in \natsw$ and $\calS$ a suitable site. Then $\Cat_n = \Cat_n^{\inc < 0}$ is the localization of $\Psh(\calS)$ at the maps of \cref{def:j-square} and the maps of \cref{def:rezkloc} with $R = 0$. This is a new model for the $\infty$-category of $(\infty,n)$-categories. Similarly, $\Cat_\omega^\coind = \Cat_\omega^{\inc <^\coind 0}$ is the localization of $\Psh(\calS)$ at the maps of \cref{def:j-square} and the maps of \cref{def:coindloc} with $C = 0$. The additional flexibility in \cref{cor:locn'} allows us to also specify truncation levels, or to impose Rezk completeness or coinductive completeness only above some category level.
\end{eg}


\section{The Gray tensor product}\label{sec:gray}

In this section, we reap the benefits of the new models built in \cref{sec:sites} by constructing the Gray tensor product of $(\infty,n)$-categories for arbitrary $n$. We begin in \cref{subsec:gray0} by constructing the Gray tensor product on $\Cat_\omega^\inc$ by introducing the notion of a \defterm{monoidally suitable site} $\calS$ (\cref{def:mon-suit}) and observing that at least one such site exists (\cref{eg:mon-suit}). A monoidally suitable site is by definition closed under the strict Gray tensor product, so we Day convolve it up (\cref{thm:day-conv}) and then reflect it down (\cref{thm:day-refl}). In the latter step we are able to directly deduce that $\Cat_\omega^\inc$ is an exponential ideal in $\Psh(\calS)$ by appeal to a result of \cite{ara-lucas} in the strict $\omega$-categorical literature, thanks to the explicit control we have over the localization from \cref{def:j-square}. Then in \cref{subsec:gray1}, we reflect the Gray tensor product further to some, but not all, of the $\infty$-categories of higher $\infty$-categories considered in \cref{subsec:loc}. Indeed, \cref{prop:susp-pres} gives a restriction on which localizations will admit a Gray tensor product: the acyclic maps must be closed under suspension. In \cref{thm:gray-reflect} and \cref{cor:gray-reflect} we see that this is the only restriction for localizations in the 4-parameter family of \cref{subsec:loc}. In other words, a localization of the form $\Cat_{\underline d}^{\inc < R}$ or $\Cat_{\underline d}^{\inc <^\coind C}$ is an exponential ideal in $\Cat_\omega^\inc$ if and only if the multi-level $\underline d$ is order-preserving. 

\subsection{The Gray tensor product on $\Cat_\omega^\inc$}\label{subsec:gray0}

\begin{Def}\label{def:mon-suit}
Let $\TPar \subset \sCat_\omega$ be a suitable site. We say that $\TPar$ is \defterm{monoidally suitable} if it is closed under the Gray tensor product on $\sCat_\omega$.
\end{Def}

\begin{eg}\label{eg:mon-suit}
The collection of \defterm{strongly loop-free complexes} in the sense of Steiner is a monoidally suitable site \cite{steiner}.
\end{eg}

\begin{thm}\label{thm:day-conv}
Let $\TPar$ be a monoidally suitable site. There is a monoidal biclosed structure $\hat \otimes$ on $\Psh(\TPar)$ such that the Yoneda embedding $\yo : \TPar \to \Psh(\TPar)$ is strong monoidal. Moreover, for any cocomplete monoidal $\infty$-category $\calE$ with colimits preserved separately in each variable, $\yo$ induces an equivalence $\Fun_{E_1}^{\lax,\cocts}(\Psh(\TPar),\calE) \to \Fun_{E_1}^\lax(\TPar,\calE)$.
\end{thm}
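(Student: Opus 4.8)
The plan is to invoke the standard existence and universal property of the Day convolution monoidal structure for $\infty$-categories of presheaves (see Glasman, or Lurie's treatment via $\infty$-operads), specialized to the case where the source category carries a (nonsymmetric) monoidal, i.e. $E_1$-, structure. Since $\TPar$ is monoidally suitable, the strict Gray tensor product restricts to a monoidal structure on $\TPar$ (this uses that $\TPar$ is closed under $\otimes$ in $\sCat_\omega$, and that the monoidal unit $\square^0 = [0] \in \Theta \subseteq \TPar$). Regard this as an $E_1$-monoidal $\infty$-category (a $1$-category is in particular an $\infty$-category). The Day convolution then endows $\Psh(\TPar) = \Fun(\TPar^\op, \Spaces)$ with an $E_1$-monoidal structure $\hat\otimes$ for which the Yoneda embedding is strong monoidal, and $\hat\otimes$ preserves colimits separately in each variable. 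That $\hat\otimes$ is biclosed follows from the adjoint functor theorem: $\Psh(\TPar)$ is presentable and $X \hat\otimes (-)$ and $(-) \hat\otimes X$ are cocontinuous, hence each admits a right adjoint.

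For the universal property, I would cite the defining property of Day convolution: for any cocomplete $E_1$-monoidal $\infty$-category $\calE$ with tensor preserving colimits separately in each variable, restriction along $\yo$ induces an equivalence between the $\infty$-category of $E_1$-lax monoidal cocontinuous functors $\Psh(\TPar) \to \calE$ and the $\infty$-category of $E_1$-lax monoidal functors $\TPar \to \calE$. This is precisely the asserted equivalence $\Fun_{E_1}^{\lax,\cocts}(\Psh(\TPar),\calE) \to \Fun_{E_1}^\lax(\TPar,\calE)$. One must be slightly careful that the flavor of ``lax monoidal'' in the cited Day convolution statement matches the one intended here; in the nonsymmetric setting the relevant notion is that of an $E_1$-algebra in the appropriate functor category, equivalently a lax monoidal functor of $\infty$-operads over the associative operad, and the equivalence is an equivalence of $\infty$-categories (mapping spaces, not just $\pi_0$), which is what the word ``space'' in the statement refers to once one passes to maximal sub-$\infty$-groupoids.

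The only genuine subtlety—and hence the main obstacle—is bookkeeping around \emph{nonsymmetric} monoidal structures: the Gray tensor product is not symmetric (indeed its whole point is the directed, oplax nature of $\square^2$), so one cannot quote the commutative/$E_\infty$ Day convolution results and must use the $E_1$ (associative-operad) version. I would point to the treatment of Day convolution for nonsymmetric $\infty$-operads (e.g. Lurie's \emph{Higher Algebra}, or the account of Day convolution over a general $\infty$-operad, which specializes to $\calO = \mathrm{Assoc}$) and note that all the structural inputs we need—existence, strong monoidality of $\yo$, separate cocontinuity of $\hat\otimes$, and the lax-monoidal universal property—are available in that generality. Beyond that, the argument is a direct citation; the content special to this paper is entirely in identifying $\TPar$ as a legitimate site closed under $\otimes$, which is \cref{def:mon-suit} and \cref{eg:mon-suit}. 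Biclosedness is then the one-line adjoint-functor-theorem remark above.
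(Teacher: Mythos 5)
Your proposal is correct and follows essentially the same route as the paper: the paper also obtains $\hat\otimes$ by citing the general monoidal-cocompletion/Day-convolution machinery from Lurie's \emph{Higher Algebra} (Proposition 4.8.1.10, with $\calO$ the $E_1$ operad and $\calC = \TPar$ under the strict Gray tensor product), and likewise deduces biclosedness from separate cocontinuity via the $\infty$-categorical adjoint functor theorem. Your remarks on the nonsymmetric ($E_1$) bookkeeping and on the unit $[0] \in \Theta \subseteq \TPar$ are sound and consistent with the paper's argument.
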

\begin{proof}
This is \cite[Proposition 4.8.1.10]{ha}, applied in the case where $\calK = \emptyset$, $\calK'$ consists of all small $\infty$-categories, and $\calO$ is the $E_1$ operad, and $\calC = \TPar$, considered as a monoidal category under the Gray tensor product $\otimes$. (Strictly speaking, we deduce from here the above statement amended to say that $\hat \otimes$ preserves colimits separately in each variable, and we deduce that $\Psh(\square)$ is monoidal biclosed by the $\infty$-categorical adjoint functor theorem (\cite[Corollary 5.5.2.9]{htt}).)
\end{proof}

\begin{thm}\label{thm:day-refl}
Let $\TPar$ be a monoidally suitable site. The fully faithful nerve $\Cat_\omega^\inc \to \Psh(\TPar)$ exhibits $\Cat_\omega^\inc$ as an exponential ideal in the monoidal category $\Psh(\TPar)$ of \cref{thm:day-conv}. 
Thus the localization $L_\TPar$ is strong monoidal. Moreover, for any cocomplete monoidal $\infty$-category $\calE$ with colimits preserved by $\otimes$ separately in each variable, precomposition with $L_\TPar$ induces a fully faithful functor
\[
\Fun_{E_1}^{\strong,\cocts}(\Cat_\omega^\inc, \calE) \to \Fun_{E_1}^{\strong, \cocts}(\Psh(\TPar), \calE)
\]
whose essential image comprises those strong monoidal, colimit preserving functors $\Psh(\TPar) \to \calE$ which carry the morphisms of $J_\TPar$ to equivalences.
\end{thm}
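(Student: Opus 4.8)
The plan is to deduce this from the general theory of monoidal localizations, so the real content is to check that the Bousfield localization $L_\TPar$ of \cref{cor:locn} is \emph{compatible} with the Day convolution $\hat\otimes$ of \cref{thm:day-conv} --- that is, that the strongly saturated class of $L_\TPar$-equivalences is stable under $-\hat\otimes Z$ and $Z\hat\otimes-$ for every $Z\in\Psh(\TPar)$. Granting this, the localization $\Cat_\omega^\inc=L_\TPar\Psh(\TPar)$ acquires a monoidal structure for which $L_\TPar$ is strong monoidal, $\Cat_\omega^\inc$ becomes an exponential ideal (and the localized structure is biclosed, by presentability and the adjoint functor theorem \cite{htt}), and the ``Moreover'' clause is the universal property of such a localization. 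To check compatibility, observe that since $\hat\otimes$ preserves colimits separately in each variable and the representables generate $\Psh(\TPar)$ under colimits, the class of morphisms $f$ such that $f\hat\otimes\yo T$ is an $L_\TPar$-equivalence for all $T\in\TPar$ is itself strongly saturated; because $L_\TPar$-equivalences are generated as a strongly saturated class by $J_\TPar$, it therefore suffices to show that $j\hat\otimes\yo T$ and, symmetrically, $\yo T\hat\otimes j$ are $L_\TPar$-equivalences for every $j\in J_\TPar$ and every $T\in\TPar$.

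So fix $j\colon\yo A\cup_{\yo\partial\globe_m}\yo\globe_m\to\yo(A\ast_{\partial\globe_m}\globe_m)$ in $J_\TPar$ and $T\in\TPar$. Because both $\hat\otimes$ and the strict Gray tensor product preserve colimits in each variable, and $T\otimes(A\ast_{\partial\globe_m}\globe_m)=(T\otimes A)\ast_{T\otimes\partial\globe_m}(T\otimes\globe_m)$, the morphism $\yo T\hat\otimes j$ is identified with the canonical comparison map
\[
\yo(T\otimes A)\cup_{\yo(T\otimes\partial\globe_m)}\yo(T\otimes\globe_m)\longrightarrow\yo\big((T\otimes A)\ast_{T\otimes\partial\globe_m}(T\otimes\globe_m)\big)
\]
of the strict pushout square in $\sCat_\omega$ obtained by applying $T\otimes(-)$ to the square defining $j$. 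Now $T$ is folk-cofibrant (being a computad) and $\partial\globe_m\to\globe_m$ is a generating folk cofibration, so by the monoidality of the folk model structure on $\sCat_\omega$ (\cite{ara-lucas}, \cite{lmw}) the left leg $T\otimes\partial\globe_m\to T\otimes\globe_m$ of this square --- a Gray pushout-product of folk cofibrations --- is again a folk cofibration between finite computads; moreover, since $\TPar$ is monoidally suitable and closed under subcomplexes, all objects appearing here, and all the intermediate objects appearing in the next paragraph, lie in $\TPar$.

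It then remains to see that the comparison map of a strict pushout square in $\TPar$ along a folk cofibration is $L_\TPar$-acyclic. Since a folk cofibration between computads is a relative computad (\cite{metayer}, \cite{lmw}), I would factor $T\otimes\partial\globe_m\to T\otimes\globe_m$ as a finite composite of single cell attachments --- attaching the generators $c\otimes e_m$ of $T\otimes\globe_m$ not already in $T\otimes\partial\globe_m$ (where $c$ ranges over generators of $T$ and $e_m$ is the top cell of $\globe_m$), in order of increasing dimension --- arranged so that each stage is a subcomplex of $T\otimes\globe_m$, hence lies in $\TPar$, and each attaching map is a monomorphism. Pushing this factorization out along $T\otimes\partial\globe_m\to T\otimes A$ and comparing the strict pushout with the homotopy pushout in $\Psh(\TPar)$ one stage at a time then exhibits $\yo T\hat\otimes j$ as a finite composite of pushouts of morphisms of $J_\TPar$, hence as an $L_\TPar$-equivalence; alternatively, one quotes directly from \cref{cor:locn} and \cite{campion-paste} that $\Cat_\omega^\inc$ preserves every pushout of torsion-free complexes along a folk cofibration. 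The argument for $j\hat\otimes\yo T$ is the mirror image, using the other Gray pushout-product $\partial\globe_m\otimes T\to\globe_m\otimes T$. The main obstacle is exactly this combinatorial step: organizing $T\otimes\partial\globe_m\to T\otimes\globe_m$ into a cell-by-cell filtration that stays inside $\TPar$ and has monomorphic attaching maps, so that the comparison map really is assembled from morphisms of $J_\TPar$ --- precisely the kind of bookkeeping that the torsion-free formalism of \cite{forest} and the pasting theorem of \cite{campion-paste} (already invoked in \cref{prop:i-cube-good} and \cref{cor:locn}) are built to handle.

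Finally, once $L_\TPar$ is known to be a strong monoidal cocontinuous localization, the ``Moreover'' clause is formal. On underlying cocontinuous functors, precomposition with $L_\TPar$ is fully faithful with essential image the cocontinuous functors $\Psh(\TPar)\to\calE$ inverting $J_\TPar$ --- this is the universal property of the cocontinuous localization $\Psh(\TPar)\to\Cat_\omega^\inc$. Since $L_\TPar$ is strong monoidal, precomposition sends strong monoidal cocontinuous functors to strong monoidal cocontinuous functors inverting $J_\TPar$; conversely, if $F\colon\Psh(\TPar)\to\calE$ is strong monoidal, cocontinuous, and inverts $J_\TPar$, it factors uniquely as $F\simeq G\circ L_\TPar$ with $G$ cocontinuous, and $G$ is automatically strong monoidal because the lax structure maps of $G$ become equivalences after restriction along the essentially surjective strong monoidal $L_\TPar$. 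This identifies the essential image of precomposition on strong monoidal cocontinuous functors with those inverting $J_\TPar$, as claimed.
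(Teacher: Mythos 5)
Your proposal follows essentially the same route as the paper's proof: reduce via \cite[Proposition 4.1.7.4]{ha} to compatibility of the localization with $\hat\otimes$, reduce to representable $Z=\yo T$ using cocontinuity, identify $\yo T\hat\otimes j$ with the comparison map of the strict pushout obtained by applying $T\otimes(-)$, and invoke \cite{ara-lucas}'s preservation of monic pushouts along folk cofibrations. The only difference is that you spell out the cell-by-cell filtration needed to see that the resulting comparison map is assembled from morphisms of $J_\TPar$ (and the formal deduction of the ``Moreover'' clause), steps the paper compresses into a single sentence; this is a faithful elaboration rather than a different argument.
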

\begin{proof}
By \cite[Proposition 4.1.7.4]{ha}, it suffices to show that the $E_1$ monoidal structure $\hat \otimes$ is compatible with the localization functor $L_\TPar$. That is, we must show that if $f : \yo G \cup_{\yo S} \yo A \to \yo(G \ast_S A) \in J_\TPar$, then $L_\TPar(f \hat \otimes \id_Z)$ and $L_\TPar(\id_Z \hat \otimes f)$ are equivalences for each presheaf $Z$. We check the first statement; the proof in the other case is similar. Since $L_\TPar$ commutes with colimits and $\Psh(\TPar)$ is generated under colimits by representables, it suffices to check this when $Z = \yo z$ is representable. Since $\TPar$ is closed under $\otimes$, $f \hat \otimes \id_{\yo z}$ is the map $\yo (G \otimes z) \cup_{\yo(S \otimes z)} \yo(A \otimes z) \to \yo(G \otimes z \ast_{S \otimes z} A \otimes z)$. By \cite{ara-lucas}, $(-) \otimes z$ preserves monic pushouts along folk cofibrations, so this map is $L_\TPar$-acyclic as desired.
\end{proof}

\begin{cor}\label{cor:day-refl}
Let $\TPar$ be a moniodally suitable site. The fully faithful functor $\TPar \to \Cat_\omega^\inc$ is strong monoidal. Here $\TPar$ is regarded as a monoidal ($\infty$-)category under the usual Gray tensor product $\otimes$ of strict $\omega$-categories, and $\Cat_\omega^\inc$ is regarded as monoidal under the Gray tensor product $\otimes$ of \cref{thm:day-refl}. Moreover, we have the following two universal properties:
\begin{enumerate}
    \item For any monoidal biclosed, cocomplete $\infty$-category $\calE$, composition with $\TPar \to \Cat^\inc_\omega$ induces a fully faithful functor 
    \[\Fun_{E_1}^{\strong, \cocts}(\Cat^\inc_\omega, \calE) \to \Fun_{E_1}^\strong(\TPar, \calE)\]
    whose essential image comprises those monoidal functors $\TPar \to \calE$ whose underlying functor preserves monic pushouts along folk cofibrations.
    \item $\otimes$ is the unique monoidal biclosed structure on $\Cat_\omega^\inc$ such that $\TPar \to \Cat_\omega^\inc$ is strong monoidal.
\end{enumerate}
\end{cor}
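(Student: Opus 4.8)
The plan is to extract all three assertions from \cref{thm:day-refl} together with the universal property of Day convolution in \cref{thm:day-conv}. First, the strong monoidality of the composite $\TPar \to \Psh(\TPar) \to \Cat_\omega^\inc$: the Yoneda embedding is strong monoidal by \cref{thm:day-conv}, the localization $L_\TPar$ is strong monoidal by \cref{thm:day-refl}, and a composite of strong monoidal functors is strong monoidal; full faithfulness of the composite is part of \cref{thm:day-refl} (the nerve is fully faithful, and $\yo$ is). So this part is immediate.

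For universal property (1), I would chain the two fully faithful functors
\[
\Fun_{E_1}^{\strong,\cocts}(\Cat_\omega^\inc,\calE) \hookrightarrow \Fun_{E_1}^{\strong,\cocts}(\Psh(\TPar),\calE) \hookrightarrow \Fun_{E_1}^\strong(\TPar,\calE),
\]
the first coming from \cref{thm:day-refl}, the second from \cref{thm:day-conv} (note that a cocomplete monoidal biclosed $\calE$ has colimits preserved separately in each variable, so \cref{thm:day-conv} applies, and the equivalence there restricts from lax to strong monoidal functors since strong monoidality of a cocontinuous extension is detected on representables). It then remains to identify the essential image of the composite. An object of $\Fun_{E_1}^\strong(\TPar,\calE)$ lies in the image of the second functor automatically (every strong monoidal functor out of $\TPar$ extends), and it lies in the image of the first precisely when its cocontinuous strong monoidal extension $\widehat F : \Psh(\TPar) \to \calE$ inverts $J_\TPar$, by \cref{thm:day-refl}. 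Since for $f : \yo G \cup_{\yo S} \yo A \to \yo(G \ast_S A)$ in $J_\TPar$ the map $\widehat F(f)$ is, after expanding the colimit, the comparison map $F(G)\cup_{F(S)} F(A) \to F(G\ast_S A)$, inverting all of $J_\TPar$ is equivalent to $F$ (the underlying functor) preserving the monic pushouts along folk cofibrations that define $J_\TPar$. This is the content of the claimed description of the essential image.

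For uniqueness (2): suppose $\otimes'$ is any monoidal biclosed structure on $\Cat_\omega^\inc$ making $j : \TPar \to \Cat_\omega^\inc$ strong monoidal. Biclosedness means $-\otimes' Z$ and $Z\otimes' -$ are left adjoints, hence cocontinuous, for every $Z$. Since $\TPar \supseteq \Theta$ is dense in $\Cat_\omega^\inc$ (the key density input, cf.\ \cref{def:mon-suit} and the surrounding remarks, ultimately \cite{campion-dense}), every object of $\Cat_\omega^\inc$ is canonically a colimit of objects of $\TPar$; applying cocontinuity in each variable to such colimit presentations shows $X \otimes' Y$ is determined, naturally, by the values $j(s)\otimes' j(t) \simeq j(s\otimes t)$ for $s,t \in \TPar$, together with the associativity and unit coherence data, which are likewise pinned down on the dense generators. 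Thus $\otimes'$ agrees with $\otimes$ of \cref{thm:day-refl}. The main obstacle here is purely bookkeeping: one must check that not merely the bifunctor but the full $E_1$-monoidal structure (associators, unitors, and all higher coherences) is rigidified by density, which is the standard fact that left-extension along a dense fully faithful functor is unique as a monoidal functor when the extension is required cocontinuous in each variable; I would cite this rather than reprove it, or alternatively observe that uniqueness follows formally from property (1) applied with $\calE = \Cat_\omega^\inc$ equipped with $\otimes'$, since the identity functor and $j$ exhibit $\otimes'$ as the essential-image value of the restriction of $\id$, which must coincide with the one produced by $\otimes$.

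I expect step (2)'s coherence bookkeeping — promoting uniqueness of the underlying bifunctor to uniqueness of the monoidal structure — to be the only genuinely delicate point; everything else is assembly of \cref{thm:day-conv} and \cref{thm:day-refl}.
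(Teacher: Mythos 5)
Your proposal is correct and follows essentially the same route as the paper: factor through $\yo$ and $L_\TPar$, concatenate \cref{thm:day-conv} and \cref{thm:day-refl} for property (1), and for property (2) the ``alternative'' you sketch at the end --- applying property (1) with $\calE = (\Cat_\omega^\inc,\otimes')$ to produce a strong monoidal cocontinuous endofunctor restricting to the inclusion on the dense subcategory $\TPar$, hence equivalent to the identity --- is exactly the paper's argument, and is the preferable one since it sidesteps the coherence bookkeeping you flag in your direct density argument.
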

\begin{proof}
We can factor the functor as $\TPar \xrightarrow{\yo} \Psh(\TPar) \xrightarrow{L_\TPar} \Cat_\omega^\inc$.
The first factor is strong monoidal by \cref{thm:day-conv} and the second factor is strong monoidal by \cref{thm:day-refl}. The first universal property follows by concatenating \cref{thm:day-conv} and \cref{thm:day-refl}. For the second, suppose that $F : \TPar \to \Cat_\omega^\inc$ is strong monoidal with respect to another monoidal biclosed structure $\otimes'$ on $\Cat_\omega^\inc$, and that its underlying functor is the usual $\TPar \to \Cat_\omega^\inc$. By the first universal property, there is an induced cocontinuous, monoidal biclosed functor $\hat F : \Cat_\omega^\inc \to \Cat_\omega^\inc$ which is strong monoidal from $\otimes$ to $\otimes'$, and whose restriction to $\TPar$ is the identity. By density, $\hat F$ is naturally isomorphic to the identity. So $\hat F$ is an equivalence. Since $\hat F$ is strong monoidal, it is a monoidal equivalence.
\end{proof}

\subsection{The Gray tensor product of $(\infty,n)$-categories}\label{subsec:gray1}
We now study which of the localizations from \cref{subsec:loc} are exponential ideals with respect to the Gray tensor product of \cref{thm:day-refl}.

\begin{lem}\label{lem:decomp}
Let $C \in \Cat_\omega^\inc$. Then there is a colimit decomposition in $\Cat_\omega^\inc$: 
$$\square^1 \otimes (\Sigma C) = (((\Sigma C) \vee \square^1) \cup_{\partial \square^1} (\square^1 \vee (\Sigma C))) \cup_{\Sigma (\partial \square^1 \otimes C)} \Sigma (\square^1 \otimes C)$$
This decomposition is natural in $C$. Moreover, if $C \in \TPar$ for a monoidally suitable site $\TPar$, then these pushouts may be computed either weakly or strictly.
\end{lem}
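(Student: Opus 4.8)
The plan is to establish the colimit decomposition first for the cubes $\square^m$ themselves in $\sCat_\omega$, using the known combinatorics of the strict Gray tensor product, then bootstrap to general $C \in \Cat_\omega^\inc$ by a density/colimit argument, and finally reconcile the weak and strict computations. Concretely, I would first observe that the right-hand side is built by gluing $(\Sigma C) \vee \square^1$ and $\square^1 \vee (\Sigma C)$ along their common boundary $\partial \square^1 = [0] \amalg [0]$, and then gluing $\Sigma(\square^1 \otimes C)$ along $\Sigma(\partial\square^1 \otimes C)$. The content is that these two gluings, performed in $\Cat_\omega^\inc$, reproduce $\square^1 \otimes \Sigma C$. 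The geometric picture is exactly \cref{cubes-0} one dimension up: $\square^1 \otimes \square^2$ has its ``top'' and ``bottom'' copies of $\square^2$ (these are the $\square^1 \vee$ and $\vee \square^1$ summands after suspension), a ``left'' and ``right'' face which are $\square^1 \otimes (\text{boundary})$, and a single top-dimensional filler which is $\Sigma$ of $\square^1 \otimes (\text{interior cell})$; the displayed formula packages this via the suspension functor.

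The key steps, in order. (i) Prove the identity in $\sCat_\omega$ when $C = \theta \in \Theta$ (or more generally $C$ in a monoidally suitable site $\TPar$): here one can compute both sides explicitly as augmented directed complexes or torsion-free complexes, using Steiner's description of the Gray tensor product \cite{steiner} and the formula $\square^1 \otimes (-)$ for the ``cylinder'' construction; the decomposition is then a matter of identifying generators and their sources/targets, and checking the pushout squares are subcomplex inclusions so that strict pushouts compute the union of subcomplexes. (ii) Upgrade the strict pushouts to homotopy pushouts in $\Cat_\omega^\inc$: this is exactly where \cref{prop:i-cube-good}/\cref{cor:locn} enters — the relevant squares are (iterated) cell-attachment squares of the form appearing in $J_\TPar$, hence are preserved by the nerve $\sCat_\omega \to \Cat_\omega^\inc$; one must check that the pushout along $\Sigma(\partial\square^1 \otimes C) \hookrightarrow \Sigma(\square^1 \otimes C)$ is built from folk cofibrations (it is a suspension of $\partial\square^1 \otimes C \to \square^1 \otimes C$, and folk cofibrations are closed under suspension and under $(-)\otimes C$ by \cite{ara-lucas}), and similarly that $\partial\square^1 \hookrightarrow (\Sigma C)\vee\square^1$ and its counterpart are folk cofibrations. (iii) Extend from $\TPar$ to arbitrary $C \in \Cat_\omega^\inc$: both sides of the asserted equation are functors $\Cat_\omega^\inc \to \Cat_\omega^\inc$ of $C$; the left side $\square^1 \otimes \Sigma(-)$ is cocontinuous since $\otimes$ preserves colimits in each variable (\cref{thm:day-refl}) and $\Sigma$ preserves contractible colimits, in particular the relevant diagrams; the right side is likewise assembled from $\Sigma$, $\vee$, and finite colimits, each of which behaves well enough on the pertinent diagrams. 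Since $\TPar$ (containing $\Theta$) is dense in $\Cat_\omega^\inc$ and both sides agree on $\TPar$ compatibly with the colimit presentations, the natural transformation extends to an equivalence on all of $\Cat_\omega^\inc$, with naturality automatic from the construction.

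The main obstacle is step (i) — verifying the decomposition honestly at the point-set level for $C$ in a monoidally suitable site — together with the bookkeeping in step (ii) that every square in sight is of the cell-attachment type governed by \cref{def:j-square}, so that \cref{cor:locn} licenses passing from strict to weak pushouts and the final sentence (``these pushouts may be computed either weakly or strictly'') follows. I expect the cocontinuity argument in step (iii) to be routine given \cref{thm:day-refl} and the behavior of $\Sigma$ on pushouts, but some care is needed because $\Sigma$ does not preserve the terminal object and hence interacts subtly with the wedge $\vee$; fortunately the wedges here are set up precisely as pushouts over $\partial\square^1$, so $\Sigma$ sends them to genuine pushouts (as noted in the remark following \cref{def:spine}), which is what makes the formula well-behaved under colimits in $C$.
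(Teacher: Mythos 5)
Your overall strategy --- verify the formula strictly for $C$ in a monoidally suitable site, show the relevant pushouts agree strictly and weakly, then extend to all of $\Cat_\omega^\inc$ by naturality and colimits --- is the same as the paper's. The paper justifies ``strict $=$ weak'' slightly differently: rather than routing everything through the cell-attachment squares of $J_\TPar$ and folk cofibrations, it observes that the wedge sums $(\Sigma C)\vee\square^1$ and $\square^1\vee(\Sigma C)$ can be computed either way by \cite[Section 2]{campion-paste}, and that the two remaining gluings (over $\partial\square^1$ and over $\Sigma(\partial\square^1\otimes C)$) are pushouts already at the level of $\Psh(\Theta)$, hence automatically agree weakly and strictly. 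Your route should also work but carries a heavier verification burden.

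The one genuine gap is in your step (iii). The functors on the two sides of the identity are \emph{not} cocontinuous in $C$: they are built from $\Sigma$ and $\vee$, which preserve only contractible colimits. So ``both sides agree on the dense subcategory $\TPar$, hence everywhere'' does not follow from density alone --- the canonical density presentation of a general $C$ as a colimit over $\TPar_{/C}$ need not be a contractible diagram, so a natural transformation defined on $\TPar$ does not automatically extend, nor is an objectwise equivalence on $\TPar$ automatically inherited by all $C$. The paper's fix is to check the formula separately at $C=\emptyset$ and then extend along $\TPar^\triangleleft$ (the site with an initial object adjoined), over which the density presentations \emph{are} contractible; the resulting comparison map is then checked to be an equivalence on the colimit generators $\globe^n$, where the first paragraph applies. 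You sense that care is needed here, but you locate the difficulty in $\Sigma$ not preserving the terminal object and its interaction with $\vee$; the actual issue is the initial object and the contractibility of the indexing diagrams. (A small reading slip: the wedges $(\Sigma C)\vee\square^1$ are pushouts over $\square^0$, not over $\partial\square^1$; it is the two wedges that are subsequently glued along $\partial\square^1$.)
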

\begin{proof}
Let $\TPar$ be a monoidally suitable site. First consider the case where $C \in \TPar$. By results in \cite[Secion 2]{campion-paste}, the wedge sums $(\Sigma C) \vee \square^1$ and $\square^1 \vee (\Sigma C)$ may be computed either strictly or weakly. Then because the pushout $((\Sigma C) \vee \square^1) \cup_{\partial \square^1} (\square^1 \vee (\Sigma C))$ is computed as a pushout of in $\Psh(\Theta)$, it also agrees when computed strictly or weakly. So this factor may be computed strictly. We see that it accounts for all cells in $\square^1 \otimes (\Sigma C)$ of the form $x \otimes y$ where $x$ or $y$ is of dimension $0$, and all composites thereof. The $\Sigma(C \otimes \square^1)$ then accounts for only cells in the ``long" / ``diagonal" hom-category, filling in those cells of the form $(\Sigma x) \otimes (\Sigma y)$. Moreover, the pushout attaching this part is a pushout in $\Psh(\Theta)$, so it can be computed either strictly or weakly. Thus the formula is correct when $C \in \TPar$, and may be computed either weakly or strictly.

All of the maps involved are natural in $C$, and every part of the expression preserves contractible colimits. So after checking that the two sides are equivalent when $C = \emptyset$, we deduce that the relevant maps exist by extending by contractible colimits. We deduce that the map between the two sides is an equivalence by reducing to the case where $C = \globe^n$ is a globe. Here it is true by the previous paragraph.
\end{proof}

\begin{Def}
We write $\square^1 \otimes^\funny \Sigma C = ((\Sigma C) \vee \square^1) \cup_{\partial \square^1} (\square^1 \vee (\Sigma C))$, so that \cref{lem:decomp} reads $\square^1 \otimes (\Sigma C) = (\square^1 \otimes^\funny (\Sigma C)) \cup_{\Sigma (\partial \square^1 \otimes C)} \Sigma (\square^1 \otimes C)$. This is an instance of the so-called \defterm{funny tensor product} of strict $\omega$-categories, which we will not develop further right now.
\end{Def}

\begin{lem}\label{lem:susp-tensor}
For $C \in \Cat_\omega^\inc$, we have:
\begin{equation*}
    \Sigma C = \square^1 \otimes C \cup_{\partial \square^1 \otimes C} \partial \square^1.
\end{equation*}
\end{lem}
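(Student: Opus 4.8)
The plan is to follow the pattern of the proof of \cref{lem:decomp}: prove the identity strictly, transfer it to $\Cat_\omega^\inc$ for $C$ in a monoidally suitable site, and extend to all of $\Cat_\omega^\inc$ by a contractible-colimit argument. The strict statement I would establish first is that in $\sCat_\omega$ there is a natural isomorphism $\Sigma C \cong \square^1 \otimes C \cup_{\partial \square^1 \otimes C} \partial \square^1$, where $\partial\square^1 \otimes C \to \square^1 \otimes C$ is the Gray tensor of the inclusion $\partial\square^1 \hookrightarrow \square^1$ with $C$ and $\partial\square^1 \otimes C = C \amalg C \to [0]\amalg[0] = \partial\square^1$ is the fold of the two terminal maps.

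For the strict identity I would use the Yoneda lemma. A map out of $\square^1 \otimes C \cup_{\partial\square^1\otimes C}\partial\square^1$ into $D$ is a map $\square^1 \otimes C \to D$ whose restrictions to the two ends $\{0\}\otimes C$ and $\{1\}\otimes C$ are constant. Since $\square^1 = [1]$, a map $\square^1 \otimes C \to D$ is, by the defining property of the Gray tensor product, an (op)lax natural transformation between the two endpoint functors $C \rightrightarrows D$; requiring these to be constant at objects $x,y \in D$ identifies the datum with an (op)lax transformation $\const_x \Rightarrow \const_y$, and such a transformation is exactly a strict functor $C \to \Hom_D(x,y)$. Together with the choice of $x,y$ this is precisely the datum of a map $\Sigma C \to D$, by the definition of $\Sigma$ (\cref{def:susp-strict}). (Alternatively, one may cite this as a standard property of the strict Gray tensor product.) In particular, this identifies the boundary, after collapsing the ends, of the cell $01\otimes e$ of $\square^1\otimes C$ attached to a generating cell $e$ of the computad $C$ with the boundary of the corresponding cell $\Sigma e$ of $\Sigma C$.

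Next I would transfer to $\Cat_\omega^\inc$; it suffices to treat $C \in \Theta$ together with $C = \emptyset$ (where both sides are $\partial\globe_1$). Fix a monoidally suitable site $\TPar$ (e.g.\ the strongly loop-free Steiner complexes, \cref{eg:mon-suit}); then $\square^1\otimes C$, $\partial\square^1\otimes C$ and $\Sigma C \in \Theta$, as well as all the intermediate skeleta in sight, lie in $\TPar$. Present $\square^1\otimes C$ as obtained from $\partial\square^1\otimes C$ by attaching the cells $01\otimes e$, and $\Sigma C$ as obtained from $\partial\globe_1$ by attaching the cells $\Sigma e$; each of these single-cell attachments belongs to $J_\TPar$ (\cref{def:j-square}), hence is preserved by the nerve $\sCat_\omega \to \Cat_\omega^\inc = L_\TPar\Psh(\TPar)$ by \cref{cor:locn}. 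Pushing out the first presentation along $\partial\square^1\otimes C \to \partial\square^1$ and commuting the pushouts (using that the nerve is strong monoidal, \cref{thm:day-refl}, and carries suspensions to suspensions, \cref{def:nerve}), one obtains a cell-attachment presentation of $\square^1\otimes C \cup_{\partial\square^1\otimes C}\partial\square^1$ whose attaching maps, by the last observation above, are exactly those of the cells $\Sigma e$; hence this pushout is $\Sigma C$ in $\Cat_\omega^\inc$.

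Finally, both sides are functors of $C$ on $\Cat_\omega^\inc$ preserving contractible colimits: $\Sigma$ does by the adjunction of \cref{def:susp}, while $\square^1\otimes(-)$ and $\partial\square^1\otimes(-)$ preserve all colimits (\cref{thm:day-refl}), $\partial\square^1$ is a constant functor, and pushouts commute with contractible colimits. As $\Theta$ together with $\emptyset$ generates $\Cat_\omega^\inc$ under contractible colimits (as in the proof of \cref{lem:decomp}, reducing to globes), the natural isomorphism established on $\Theta \cup \{\emptyset\}$ extends to one on all of $\Cat_\omega^\inc$. I expect the transfer step to be the main obstacle: the pushout $\square^1\otimes C \cup_{\partial\square^1\otimes C}\partial\square^1$ has a non-monic leg and is \emph{not} preserved by the nerve into $\Psh(\Theta)$, so one genuinely needs the cell-by-cell description of the localization $\Psh(\TPar)\to\Cat_\omega^\inc$ provided by \cite{campion-paste} to see that it is nonetheless computed correctly in $\Cat_\omega^\inc$; the strict identity and the colimit extension are routine by comparison.
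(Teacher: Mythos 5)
Your proposal is correct in outline, and its first and last steps (the strict identity, which the paper simply cites as \cite[Cor.~B.6.6]{ara-maltsiniotis}, and the extension to all of $\Cat_\omega^\inc$ by contractible colimits from $\TPar^\triangleleft$) coincide with the paper's. Where you genuinely diverge is in the verification that the square is a pushout in $\Cat_\omega^\inc$. The paper checks this only on the globes $C = \globe^n$, by induction on $n$: it pastes the square against the funny-tensor decomposition of \cref{lem:decomp}, shows one auxiliary square is a pushout by an explicit colimit manipulation, and concludes by cancellation and the fact that $\Sigma$ preserves contractible colimits. You instead check it on all of $\Theta$ at once, by exhibiting both $\square^1\otimes C\cup_{\partial\square^1\otimes C}\partial\square^1$ and $\Sigma C$ as the same iterated cell attachment onto $\partial\square^1=\partial\globe_1$, invoking the pasting theorem of \cite{campion-paste} at each stage of the filtration by generating cells. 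Your route has the virtue of not depending on \cref{lem:decomp} at all and of making visible exactly where the non-monic leg $\partial\square^1\otimes C\to\partial\square^1$ is tamed; its cost is that the inductive bookkeeping must be done carefully: at each stage you must identify the partial quotient (a pushout in $\Cat_\omega^\inc$) with the nerve of the corresponding skeleton of $\Sigma C$, and check that this skeleton lies in $\TPar$ (it does, being a subcomplex of $\Sigma C\in\Theta\subseteq\TPar$), so that the next attachment is again a $J_\TPar$-pushout preserved by the nerve --- as stated, your ``commuting the pushouts'' sentence compresses this induction. The only other point to watch is the orientation convention in your Yoneda computation (lax versus oplax transformations between constant functors), but since you also offer to cite the standard strict reference, this is cosmetic.
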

\begin{proof}
When $C$ is a strict $\omega$-category, we have such a strict pushout square, natural in $C$ \cite[Cor. B.6.6]{ara-maltsiniotis}. In particular, we have such a natural strict pushout when $C \in \TPar$, for a monoidally suitable site $\TPar$. This extends also to $C = \emptyset$, the empty presheaf. As every term in the commutative square preserves contractible colimits in $C$ (see \cite[Section 1]{campion-paste} for $\Sigma$), and as the density presentation of any $X \in \Psh(\TPar)$ with respect to $\TPar^\triangleleft$ is contractible, we obtain a natural commutative square $\partial \square^1 \hat \otimes X \rightrightarrows \partial \square^1 , \square^1 \hat \otimes X \rightrightarrows \tilde \Sigma X$ for all $X \in \Psh(\TPar)$, and in particular for $X \in \Cat_\omega^\inc$.

To see that the square is a homotopy pushout, it suffices to check on a colimit-generating set, such as the globes $C = \globe^n$. We proceed by induction on $n$. When $n = 0$, the statement is trivial. Inductively, write $\globe^{n+1} = \Sigma G$, where $G = \globe^n$. We have a commutative diagram:
\begin{equation*}
\begin{tikzcd}
& \partial \square^1 \otimes \Sigma G \ar[r] \ar[d] & 
\Sigma \emptyset \ar[d] \\
\Sigma(\partial \square^1 \otimes G) \ar[r] \ar[d] & 
\square^1 \otimes^\funny \Sigma G \ar[r] \ar[d] & 
\Sigma(\partial \square^1) \ar[d] \\
\Sigma(\square^1 \otimes G) \ar[r] & 
\square^1 \otimes \Sigma G \ar[r] & 
\Sigma^2 G
\end{tikzcd}
\end{equation*}
We wish to show that the vertical composite of the two squares on the right is a pushout. We claim first that the top-right square is a pushout. This follows by commuting colimits with colimits:
\begin{align*}
    (\square^1 \otimes^\funny \Sigma G) \cup_{\Sigma G \amalg \Sigma G} (\square^0 \amalg \square^0)
    &= (((\Sigma G) \vee \square^1) \cup_{\partial \square^1} (\square^1 \vee (\Sigma G))) \cup_{\Sigma G \amalg \Sigma G} (\square^0 \amalg \square^0) \\
    &= (((\Sigma G) \vee \square^1) \cup_{\partial \square^1} (\square^1 \vee (\Sigma G))) \cup_{\Sigma G \cup_\emptyset \Sigma G} (\square^0 \cup_\emptyset \square^0) \\
    &= (((\Sigma G) \vee \square^1) \cup_{\Sigma G} \square^0) \cup_{\partial \square^1 \cup_\emptyset \emptyset} (\square^1 \vee (\Sigma G)) \cup_{\Sigma G} \square^0) \\
    &= (\square^0 \vee \square^1) \cup_{\partial \square^1} (\square^1 \vee \square^0) \\
    &= \square^1 \cup_{\partial \square^1} \square^1 \\
    &= \Sigma(\partial \square^1)
\end{align*}
as desired.
So it will suffice to show that the bottom-right square is a pushout. The bottom-left square is a pushout by \cref{lem:decomp}. So it will suffice to show that the horizontal composite of the bottom two squares is a pushout. This square results by applying $\Sigma$ to a commutative square which by induction expresses the pushout $\Sigma G = \square^1 \otimes G \cup_{\partial \square^1 \otimes G} \partial \square^1$. As $\Sigma$ preserves contractible colimits, this is likewise a pushout as desired.
%
\end{proof}

\begin{prop}\label{prop:susp-pres}
Let $L : \Cat_\omega^\inc {}^\to_\leftarrow L\Cat_\omega^\inc : i$ be a localization. Suppose that $L \Cat_\omega^\inc$ is an exponential ideal in $\Cat_\omega^\inc$. Then the $L$-acyclic morphisms are closed under suspension.
\end{prop}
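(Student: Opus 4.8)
The statement to prove is that if $L\Cat_\omega^\inc$ is an exponential ideal, then the class of $L$-acyclic maps is closed under the suspension functor $\Sigma$.  The natural strategy is to express $\Sigma$ in terms of the Gray tensor product, which interacts well with the exponential-ideal hypothesis, and then invoke \cref{lem:susp-tensor}.  Specifically, \cref{lem:susp-tensor} gives, naturally in $C$, a pushout square
\[
\Sigma C = \square^1 \otimes C \cup_{\partial \square^1 \otimes C} \partial \square^1 .
\]
So the plan is: (1) show $(-)\otimes C$ and $C\otimes(-)$ preserve $L$-acyclic maps, for every $C$, using that $L\Cat_\omega^\inc$ is an exponential ideal; (2) conclude that $\partial\square^1\otimes(-)$ and $\square^1\otimes(-)$ preserve $L$-acyclic maps; (3) feed an $L$-acyclic map $f\colon C\to C'$ into the pushout square above, observing that pushouts of $L$-acyclic maps (along any map) are $L$-acyclic in the cocontinuous localization, so $\Sigma f$ is a map between two pushouts of $L$-acyclic maps, hence $L$-acyclic.

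**Key steps, in order.**  First I would recall the standard fact about exponential ideals: if $i\colon L\calC\hookrightarrow\calC$ is a reflective localization of a monoidal $\infty$-category whose tensor preserves colimits in each variable, and $L\calC$ is an exponential ideal (closed under internal homs $\llbracket X,-\rrbracket$), then for each $X$ the functor $X\otimes(-)$ carries $L$-acyclic maps to $L$-acyclic maps.  The argument is the usual adjunction dance: $g$ is $L$-acyclic iff $\Map(g, iZ)$ is an equivalence for all $Z\in L\calC$; by the tensor–hom adjunction $\Map(X\otimes g, iZ)\simeq\Map(g,\llbracket X, iZ\rrbracket)$, and $\llbracket X,iZ\rrbracket$ again lies in $L\calC$ by the exponential-ideal hypothesis, so this is an equivalence.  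Symmetrically for $(-)\otimes X$ using the other internal hom (biclosedness).  Second, apply this with $X=\square^1$ and $X=\partial\square^1$ to see that both endofunctors $\square^1\otimes(-)$ and $\partial\square^1\otimes(-)$ of $\Cat_\omega^\inc$ preserve $L$-acyclic maps.  Third, take an arbitrary generating (or arbitrary) $L$-acyclic map $f\colon C\to C'$.  Applying \cref{lem:susp-tensor} naturally in $C$, $\Sigma f$ is the map on horizontal cofibers/pushouts induced by the natural transformation of spans
\[
\bigl(\partial\square^1 \leftarrow \partial\square^1\otimes C \to \square^1\otimes C\bigr)\ \longrightarrow\ \bigl(\partial\square^1 \leftarrow \partial\square^1\otimes C' \to \square^1\otimes C'\bigr),
\]
which is the identity on the left vertex and is $L$-acyclic at the other two vertices by Step 2.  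Since $L$-acyclic maps form the left class of a factorization system stable under pushout and under the formation of such cube pushouts (a pushout of an $L$-acyclic map is $L$-acyclic in any cocontinuous localization, and these are stable under composition and cobase change), the induced map $\Sigma f$ on pushouts is $L$-acyclic.  Finally I would remark that it suffices to treat a generating set of $L$-acyclic maps since $\Sigma$ preserves colimits (it preserves contractible colimits, as noted after \cref{def:susp}, and one checks the basepoint/empty case separately, exactly as in the proofs of \cref{lem:decomp} and \cref{lem:susp-tensor}) and the $L$-acyclic maps are the saturation of any generating set.

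**Main obstacle.**  The routine steps are the adjunction argument and the pushout-stability of acyclic maps; the one point that needs genuine care is the passage from "$\square^1\otimes(-)$ and $\partial\square^1\otimes(-)$ preserve acyclicity" to "$\Sigma$ preserves acyclicity," because \cref{lem:susp-tensor} is a pushout decomposition and one must be sure that the comparison map $\Sigma f$ really is the map between the two pushouts induced by a pointwise-acyclic map of spans, so that Reedy-type or "pasting of pushouts" reasoning applies.  Concretely, the subtlety is the naturality in $C$ of the whole square in \cref{lem:susp-tensor} — including the map $\partial\square^1\otimes C\to\partial\square^1$ — which is asserted there; once naturality is in hand, the gluing lemma for $L$-acyclic maps (equivalently, that $L\colon\Cat_\omega^\inc\to L\Cat_\omega^\inc$ is cocontinuous, so it sends the pushout square for $\Sigma C$ to the pushout square for $L(\Sigma C)$, and a natural transformation of pushout diagrams that is an equivalence after $L$ in the three non-initial corners is an equivalence after $L$ on the pushout) finishes the argument.
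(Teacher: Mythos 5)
Your proposal is correct and follows essentially the same route as the paper: the exponential-ideal hypothesis makes $\square^1\otimes f$ (and $\partial\square^1\otimes f$) $L$-acyclic, and then the natural pushout decomposition of \cref{lem:susp-tensor} transfers acyclicity to $\Sigma f$. You merely spell out the gluing-lemma step (that $L$, being cocontinuous, sends a levelwise-acyclic map of pushout spans to an equivalence of pushouts) that the paper compresses into the phrase ``by cobase-change.''
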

\begin{proof}
Let $f : A \to B$ be $L$-acyclic. By the exponential ideal condition, $\square^1 \otimes f : \square^1 \otimes A \to \square^1 \otimes B$ is $L$-acyclic. By cobase-change (from \cref{lem:susp-tensor}), $\Sigma f : \Sigma A \to \Sigma B$ is likewise $L$-acyclic.
\end{proof}

\begin{rmk}
Let $n \in \natsw$ and $\underline{d} : [n] \to \Levw$. By \cref{prop:susp-pres}, the localization $\Cat_{\underline{d}}^\inc$ is not compatible with the Gray tensor product unless $d(m)$ is weakly decreasing in $m$. We shall soon see (\cref{cor:gray-reflect}) that at least for this class of localizations, this is the \emph{only} restriction -- so long as $\underline{d}$ is weakly decreasing, $\Cat_{\underline{d}}^\inc$ is compatible with the Gray tensor product. Similarly, the localizations $\Cat_\omega^{\inc < R}$ and $\Cat_\omega^{\inc<^\coind C}$ (which are suspension-stable) will be seen to be compatible with the Gray tensor product.
\end{rmk}

\begin{lem}\label{lem:fun-acyc}
Let $\calC$ be presentable category. Let $F : \calC \to \calC$ be a functor preserving contractible colimits. Let $S \subseteq \Mor \calC$ be a set of morphisms, and let $F(S)$ be its image under $F$. If $f$ is $L_SS$-acyclic, it follows that $F(f)$ is $L_{F(S)}$-acyclic.
\end{lem}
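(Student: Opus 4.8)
The plan is to prove this by an abstract argument about cocontinuous localizations that treats $F$ as a ``change of base'' along a cocontinuous functor. The key observation is that since $F$ preserves contractible colimits, it has a \emph{pointed} analogue which is genuinely cocontinuous; more precisely, the functor $F^+ : \calC \to \calC$ defined on an object $X$ by $F^+(X) = F(X) \amalg F(\emptyset)$ (or, better, the functor on the coslice $\calC_{F(\emptyset)/}$ induced by $F$) preserves \emph{all} colimits, since the only colimits $F$ might fail to preserve are the empty ones, i.e.\ the initial object. However, for the purpose of acyclicity it is cleaner to argue directly: I would show that for any object $Z \in L_S\calC$, i.e.\ any $Z$ which is $S$-local, the morphism $F(f)$ is orthogonal to $Z$ for every $f \in S$, and that this suffices.

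First I would recall the characterization of $L_{F(S)}$-acyclic maps: a morphism $g$ in $\calC$ is $L_{F(S)}$-acyclic if and only if it is carried to an equivalence by $L_{F(S)}$, equivalently if and only if $\Map_\calC(-, Z)$ carries $g$ to an equivalence for every $F(S)$-local object $Z$, equivalently for every $Z$ in the saturated class generated by the $F(S)$-local objects. So it is enough to show: for every $F(S)$-local $Z$, the map $F(f)$ induces an equivalence on mapping spaces into $Z$. The second step is the heart of the matter and is where $F$ preserving contractible colimits is used. The $L_S$-acyclic morphisms form the smallest class of morphisms containing $S$ and closed under the 2-out-of-3 property, pushouts, transfinite composition, and retracts (the saturation of $S$); I would instead use the cleaner fact that $f$ is $L_S$-acyclic iff it lies in the strong saturation of $S$, which is generated from $S$ under \emph{contractible} colimits in the arrow category together with 2-out-of-3 and retracts. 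Since $F$ preserves contractible colimits and trivially preserves 2-out-of-3 and retracts, $F$ carries this strong saturation of $S$ into the strong saturation of $F(S)$, hence carries $L_S$-acyclic maps to $L_{F(S)}$-acyclic maps.

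The main obstacle I anticipate is making precise the claim that the class of acyclic morphisms is generated under \emph{contractible} (as opposed to arbitrary) colimits, since the naive statement ``acyclic maps are closed under colimits in the arrow category'' would require $F$ to preserve \emph{all} colimits. The fix is standard but must be handled with care: one reduces to generators. Concretely, I would argue that the class $\mathcal{W} = \{ g \in \Mor\calC \mid F(g) \text{ is } L_{F(S)}\text{-acyclic}\}$ contains $S$ (clear, since $F(S)$ is acyclic for its own localization), satisfies 2-out-of-3 and is closed under retracts (immediate from functoriality of $F$ and the corresponding closure properties of $L_{F(S)}$-acyclic maps), and is closed under those colimit-type constructions that $F$ preserves — in particular pushouts of the form $A \to B \leftarrow C$ with one leg in $\mathcal{W}$, transfinite composition, and more generally any contractible colimit in $\Fun([1],\calC)$. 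One then invokes the fact (valid in any presentable $\calC$ for a \emph{set} $S$) that $L_S$-acyclic maps are exactly the maps obtained from $S$ and the identities under pushout, transfinite composition, retract, and 2-out-of-3 — all operations under which $\mathcal{W}$ is closed — to conclude $L_S\text{-acyclic} \subseteq \mathcal{W}$, which is precisely the statement. The only subtlety to double-check is that the relevant generation process really does stay within the world of colimits that $F$ preserves; since pushouts, coproducts indexed by nonempty sets, and sequential/transfinite colimits are all contractible (connected, or reducible to connected ones), and the initial object never needs to be adjoined (the identity of the initial object is already in $\mathcal{W}$), this goes through.
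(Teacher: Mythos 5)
Your proposal is correct and is essentially the paper's own argument: the paper likewise observes that composites, isomorphisms, and retracts are preserved by any functor, that pushouts and transfinite composites are contractible colimits, and that the only non-contractible ingredients in generating the strongly saturated class (the initial object of the arrow category and nonempty coproducts) are harmless, the latter because they can be rewritten as wide pushouts under the image of the initial object. The only cosmetic difference is your framing via the class $\mathcal{W} = \{g \mid F(g) \text{ is } L_{F(S)}\text{-acyclic}\}$, which the paper leaves implicit.
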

\begin{proof}
Composites, isomorphisms, and retracts are preserved by all functors. Pushouts and transfinite composition are contractible colimits. The initial object in the arrow category is carried to an isomorphism, and nonempty coproducts in the arrow category are carried to (infinitary) pushouts under the image of the initial object. 
\end{proof}

\begin{lem}\label{lem:term-tensor-acyc}
Let $C \in \Cat_\omega^\inc$ and $t : C \to \square^0$ the unique morphism. Then $\square^1 \otimes t$ is $L_{t,\Sigma t}$-acyclic.
\end{lem}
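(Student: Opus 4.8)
The plan is to deduce everything from \cref{lem:susp-tensor}, which supplies, naturally in $D\in\Cat_\omega^\inc$, a map $u_D\colon\square^1\otimes D\to\Sigma D$ exhibiting $\Sigma D$ as the pushout $\square^1\otimes D\cup_{\partial\square^1\otimes D}\partial\square^1$. The crucial point is that $u_D$ is the cobase change, along the map $\partial\square^1\otimes D\to\square^1\otimes D$, of the map $\partial\square^1\otimes D\to\partial\square^1$ induced by the terminal morphism $D\to\square^0$ — here using that $\partial\square^1\otimes\square^0=\partial\square^1$, since $\square^0$ is the monoidal unit. Throughout I will use that the class of $L_{t,\Sigma t}$-acyclic morphisms is closed under composition, coproducts, and cobase change, and satisfies two-out-of-three; these are standard properties of the class of maps inverted by a cocontinuous localization (cf.\ the proof of \cref{lem:fun-acyc}).

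First I would show $u_C$ is $L_{t,\Sigma t}$-acyclic. Since $\square^0$ is the unit and $\partial\square^1=\square^0\amalg\square^0$, tensoring gives $\partial\square^1\otimes C\cong C\amalg C$ naturally, and under this identification the map $\partial\square^1\otimes t\colon\partial\square^1\otimes C\to\partial\square^1$ becomes $t\amalg t$. As a coproduct of copies of $t$ this is $L_t$-acyclic, hence $L_{t,\Sigma t}$-acyclic, so its cobase change $u_C$ is also $L_{t,\Sigma t}$-acyclic. The same description with $C$ replaced by $\square^0$ identifies $u_{\square^0}\colon\square^1\otimes\square^0\to\Sigma\square^0$ with the cobase change of $\id_{\partial\square^1}$, i.e.\ with an equivalence; in particular $u_{\square^0}$ is $L_{t,\Sigma t}$-acyclic.

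Finally I would feed the terminal morphism $t\colon C\to\square^0$ into the naturality of $u$, obtaining a commuting square with horizontal edges $\square^1\otimes t$ and $\Sigma t$ and vertical edges $u_C$ and $u_{\square^0}$. Of its four edges, $u_C$ and $u_{\square^0}$ are $L_{t,\Sigma t}$-acyclic by the previous paragraph, and $\Sigma t$ is so by the very definition of the localized class. Hence the composite $u_{\square^0}\circ(\square^1\otimes t)=(\Sigma t)\circ u_C$ is $L_{t,\Sigma t}$-acyclic, and since $u_{\square^0}$ is too, two-out-of-three gives that $\square^1\otimes t$ is $L_{t,\Sigma t}$-acyclic.

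I do not foresee a real obstacle; the only slightly delicate bookkeeping is checking that $\partial\square^1\otimes t$ is a coproduct of copies of $t$ — which uses that $(-)\otimes C$ preserves coproducts, part of the biclosedness of the Gray tensor product from \cref{thm:day-refl} — and that the square in the last step is genuinely the $C$-to-$\square^0$ face of the naturalization of \cref{lem:susp-tensor}. Note that one should \emph{not} attempt to reduce to the case $C=\globe^n$: the localization $L_{t,\Sigma t}$ depends on the fixed object $C$, so such a reduction is unavailable, but the argument above sidesteps it by staying entirely inside the class of $L_{t,\Sigma t}$-acyclic maps.
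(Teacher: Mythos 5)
Your proposal is correct and follows essentially the same route as the paper: factor $\square^1\otimes t$ through $\Sigma C$ using \cref{lem:susp-tensor}, observe that $u_C$ is a cobase change of $\partial\square^1\otimes t\simeq t\amalg t$ and that the remaining map is $\Sigma t$, then conclude by closure of the acyclic class. The paper phrases the final step as a direct composition using the isomorphism $\square^1\otimes\square^0\simeq\Sigma\square^0$ rather than your two-out-of-three in the naturality square, but this is only a cosmetic difference.
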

\begin{proof}
By \cref{lem:susp-tensor}, the map $\Sigma t$ is a cobase-change of $\square^1 \otimes t$. Moreover, the relevant map $\square^1 \otimes \square^0 \to \Sigma \square^0$ is an isomorphism. So we have a factorization of of $\square^1 \otimes t$ as $\square^1 \otimes C \to \Sigma C \to \Sigma \square^0$. The second map is none other other than $\Sigma t$, which is tautologically $\Sigma t$-acyclic. The first map is a cobase-change of $\partial \square^1 \otimes C \to \partial \square^1$, which is $t$-acyclic by closure under coproducts. So by composition, $\square^1 \otimes t$ is likewise $L_{t, \Sigma t}$-acyclic.
\end{proof}

\begin{lem}\label{lem:susp-tensor-acyc}
Let $f: C \to D$ be a morphism in $\Cat_\omega^\inc$. Let $L_{\Sigma^\ast f}$ denote the localization of $\Cat_\omega^\inc$ at $f$ and all of its suspensions, and let $L_{\Sigma^{\ast+1} f}$ denote the localization at $\Sigma f$ and all of its suspensions. Suppose that $\square^1 \otimes f$ if $L_{\Sigma^\ast f}$-acyclic. Then $\square^1 \otimes (\Sigma f)$ is $L_{\Sigma^{\ast+1} f}$ acyclic.
\end{lem}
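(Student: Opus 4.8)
The plan is to apply the natural decomposition of \cref{lem:decomp} to the morphism $f$. Since that decomposition is natural in $C$, applying it to $f$ exhibits $\square^1 \otimes (\Sigma f)$ as the map on pushouts induced by the triple of maps $\square^1 \otimes^\funny (\Sigma f)$, $\Sigma(\partial\square^1 \otimes f)$ and $\Sigma(\square^1 \otimes f)$, compatibly with the span $(\square^1 \otimes^\funny \Sigma C) \leftarrow \Sigma(\partial\square^1 \otimes C) \rightarrow \Sigma(\square^1 \otimes C)$ presenting $\square^1 \otimes \Sigma C$ and with its counterpart for $D$. Because the $L_{\Sigma^{\ast+1}f}$-acyclic maps are exactly the maps inverted by a cocontinuous localization --- hence closed under cobase change, composition, retracts and the two-out-of-three property --- it then suffices to show each of the three maps above is $L_{\Sigma^{\ast+1}f}$-acyclic, together with the standard fact that the induced map on pushouts of a map of spans whose three component maps are $L_{\Sigma^{\ast+1}f}$-acyclic is again $L_{\Sigma^{\ast+1}f}$-acyclic.

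I would handle the three maps as follows. For $\Sigma(\square^1\otimes f)$: by hypothesis $\square^1\otimes f$ is acyclic for the localization at $\{\Sigma^k f \mid k \geq 0\}$, and $\Sigma$ preserves contractible colimits, so \cref{lem:fun-acyc} shows $\Sigma(\square^1\otimes f)$ is acyclic for the localization at $\{\Sigma^{k+1}f \mid k \geq 0\}$, i.e.\ it is $L_{\Sigma^{\ast+1}f}$-acyclic. For $\Sigma(\partial\square^1\otimes f)$: since $\square^0$ is the monoidal unit and $\partial\square^1 = \square^0 \amalg \square^0$, we have $\partial\square^1\otimes f = f \amalg f$, a composite of two cobase changes of $f$, hence $f$-acyclic; applying \cref{lem:fun-acyc} to $\Sigma$ once more makes $\Sigma(\partial\square^1\otimes f)$ a $\Sigma f$-acyclic map, a fortiori $L_{\Sigma^{\ast+1}f}$-acyclic. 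For $\square^1 \otimes^\funny (\Sigma f)$: unwinding $\square^1 \otimes^\funny \Sigma C = ((\Sigma C)\vee\square^1) \cup_{\partial\square^1} (\square^1 \vee (\Sigma C))$, the two wedge summands are pushouts along $\square^0$ in which the $\square^1$ tensor factor is held fixed, so $(\Sigma f)\vee\square^1$ and $\square^1\vee(\Sigma f)$ are cobase changes of $\Sigma f$, and gluing them over the fixed object $\partial\square^1$ exhibits $\square^1 \otimes^\funny (\Sigma f)$ as a composite of two cobase changes of these; so it is $\Sigma f$-acyclic, hence $L_{\Sigma^{\ast+1}f}$-acyclic.

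Finally I would assemble everything by an explicit factorization $\square^1 \otimes \Sigma C \to P_1 \to P_2 \to \square^1 \otimes \Sigma D$, where $P_1 = (\square^1\otimes^\funny\Sigma D)\cup_{\Sigma(\partial\square^1\otimes C)}\Sigma(\square^1\otimes C)$ and $P_2 = (\square^1\otimes^\funny\Sigma D)\cup_{\Sigma(\partial\square^1\otimes C)}\Sigma(\square^1\otimes D)$: the first two maps are cobase changes of $\square^1\otimes^\funny(\Sigma f)$ and of $\Sigma(\square^1\otimes f)$ respectively, the gluing object $\Sigma(\partial\square^1\otimes C)$ being untouched. The third map changes the gluing object to $\Sigma(\partial\square^1\otimes D)$; using that --- by naturality of \cref{lem:decomp} --- both structure maps out of $\Sigma(\partial\square^1\otimes C)$ in $P_2$ factor through $\Sigma(\partial\square^1\otimes f)$, this map is a cobase change of the canonical map $\Sigma(\partial\square^1\otimes D)\cup_{\Sigma(\partial\square^1\otimes C)}\Sigma(\square^1\otimes D)\to\Sigma(\square^1\otimes D)$, which is $L_{\Sigma^{\ast+1}f}$-acyclic by two-out-of-three, since one leg of that pushout is a cobase change of $\Sigma(\partial\square^1\otimes f)$ and its composite with the canonical map is the identity. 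I expect this last step --- keeping the naturality straight so that the change of gluing object is licensed by the already-established acyclicity of $\Sigma(\partial\square^1\otimes f)$ --- to be the only genuinely fiddly point; everything else is cobase changes and appeals to \cref{lem:fun-acyc}.
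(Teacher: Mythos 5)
Your proposal is correct and follows essentially the same route as the paper's proof: apply the natural decomposition of \cref{lem:decomp} to $f$, reduce to the acyclicity of $\Sigma f$ (hence of $\square^1\otimes^\funny(\Sigma f)$ by cobase change), $\Sigma(\partial\square^1\otimes f)$, and $\Sigma(\square^1\otimes f)$, and handle the latter two via closure under coproducts, the hypothesis, and \cref{lem:fun-acyc}. Your explicit three-stage factorization of the induced map on pushouts just spells out what the paper compresses into ``stability under pushouts.''
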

\begin{proof}
By \cref{lem:decomp}, $\square^1 \otimes (\Sigma f) = (((\Sigma f) \vee \square^1) \cup_{\partial \square^1} (\square^1 \vee (\Sigma f))) \cup_{\Sigma (\partial \square^1 \otimes f)} \Sigma (\square^1 \otimes f)$. By stability under pushouts, it will suffice to show that $\Sigma f$, $\Sigma(\partial \square^1 \otimes f)$, and $\Sigma (\square^1 \otimes f)$ are $L_{\Sigma^{\ast+1}}$-acyclic. For $\Sigma f$, this is tautological. For $\Sigma(\partial \square^1 \otimes f)$ and $\Sigma (\square^1 \otimes f)$, note that $\partial \square^1 \otimes f$ is $L_{\Sigma^\ast f}$-acyclic by closure under coproducts, that $\square^1 \otimes \square^1$ is $L_{\Sigma^\ast f}$-acyclic by hypothesis, and that the suspension of an $L_{\Sigma^\ast f}$-acyclic morphism is $L_{\Sigma^{\ast+1} f}$-acyclic by \cref{lem:fun-acyc}.
\end{proof}

\begin{thm}\label{thm:gray-reflect}
Let $L = L_S$ be a localization of $\Cat_\omega^\inc$. Suppose that $S$ is a union of sets of the form $\{\Sigma^n t_C \mid n \geq N(C)\}$ where $t_C : C \to \square^0$ is the unique map. Then the localization $L_S$ is compatible with the Gray tensor product. Thus there is a monoidal biclosed structure $\otimes_S$ on $L_S \Cat_\omega^\inc$ such that the localization $L_S : \Cat_\omega^\inc \to L_S \Cat_\omega^\inc$ is strong monoidal. Moreover, for any cocomplete monoidal $\infty$-category $\calE$ with tensor product preserving colimits separately in each variable, $L_S$ induces a fully faithful functor
\[
\Fun_{E_1}^{\strong,\cocts}(L_S\Cat_\omega^\inc, \calE) \to \Fun_{E_1}^{\strong, \cocts}(\Cat_\omega^\inc, \calE)
\]
whose image comprises those strong monoidal functors $\Cat_\omega^\inc \to \calE$ whose underlying functor factors through $L_S$.
\end{thm}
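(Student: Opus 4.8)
The plan is to invoke \cite[Proposition 4.1.7.4]{ha} exactly as in the proof of \cref{thm:day-refl}: it suffices to show that the monoidal structure $\otimes$ on $\Cat_\omega^\inc$ is compatible with the localization functor $L_S$, i.e. that for every generating $L_S$-acyclic map $f$ and every object $Z \in \Cat_\omega^\inc$, both $L_S(f \otimes \id_Z)$ and $L_S(\id_Z \otimes f)$ are equivalences. By the symmetry afforded by biclosedness (or by running the argument twice), it is enough to handle $f \otimes \id_Z$; and since $\otimes$ preserves colimits separately in each variable and the globes $\globe^n$ are a colimit-generator of $\Cat_\omega^\inc$ (\cref{prop:strong}), a standard closure argument reduces us to checking that $\square^1 \otimes f$ is $L_S$-acyclic whenever $f$ is one of the generating maps $\Sigma^n t_C$ with $n \geq N(C)$, since $\square^m \otimes f$ is built from $\square^1 \otimes f$ by iterated tensoring and $\otimes$ against a general $Z$ is an iterated colimit of the $\square^m \otimes f$ after the cube-to-general reduction. (Here one uses that the Gray cubes generate, or more simply that it suffices to treat $Z$ representable in a monoidally suitable site $\calS$, then $Z$ a general presheaf by density, then restrict back to $\Cat_\omega^\inc$.)

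The core of the argument is therefore an induction on $n$ showing that $\square^1 \otimes \Sigma^n t_C$ is $L_S$-acyclic. The base case $n = 0$ is \cref{lem:term-tensor-acyc}: $\square^1 \otimes t_C$ is $L_{t_C, \Sigma t_C}$-acyclic, hence $L_S$-acyclic since $S$ contains $\Sigma^m t_C$ for all $m \geq N(C)$, and in particular — once we know the acyclicity at every level $\geq N(C)$ — we get the low-level cases too, after noting that \cref{lem:term-tensor-acyc} only needs $t_C$ and $\Sigma t_C$, and the case $n < N(C)$ of $\square^1 \otimes \Sigma^n t_C$ is itself $L_{\Sigma^n t_C, \Sigma^{n+1} t_C}$-acyclic by the suspended form of that lemma. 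The inductive step is precisely \cref{lem:susp-tensor-acyc} applied to $f = \Sigma^n t_C$: if $\square^1 \otimes \Sigma^n t_C$ is $L_{\Sigma^\ast(\Sigma^n t_C)}$-acyclic, then $\square^1 \otimes \Sigma^{n+1} t_C$ is $L_{\Sigma^{\ast+1}(\Sigma^n t_C)}$-acyclic; since $L_{\Sigma^{\ast+1}(\Sigma^n t_C)}$-acyclicity is implied by $L_S$-acyclicity for the relevant range, this propagates the statement upward. Threading the indices carefully, one concludes that $\square^1 \otimes \Sigma^n t_C$ is $L_S$-acyclic for all $n \geq 0$.

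Once compatibility of $\otimes$ with $L_S$ is established, \cite[Proposition 4.1.7.4]{ha} supplies the monoidal structure $\otimes_S$ on $L_S\Cat_\omega^\inc$ together with the fact that $L_S$ is strong monoidal, and biclosedness of $\otimes_S$ then follows from cocompleteness and the adjoint functor theorem (\cite[Corollary 5.5.2.9]{htt}) exactly as in the parenthetical remark in the proof of \cref{thm:day-conv}. The universal property is then formal: precomposition with $L_S$ sends $\Fun_{E_1}^{\strong,\cocts}(L_S\Cat_\omega^\inc,\calE)$ fully faithfully into $\Fun_{E_1}^{\strong,\cocts}(\Cat_\omega^\inc,\calE)$ because $L_S$ is a (strong monoidal) localization and hence a (monoidal) epimorphism of $\infty$-categories; and a strong monoidal cocontinuous $F : \Cat_\omega^\inc \to \calE$ lies in the image iff its underlying functor inverts the $L_S$-acyclic maps, i.e. factors through $L_S$, which is the claimed description of the essential image.

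The step I expect to be the main obstacle is the bookkeeping in the reduction from "$f \otimes \id_Z$ for general $Z$" to "$\square^1 \otimes \Sigma^n t_C$", i.e. making precise that $\square^m \otimes f$ is $L_S$-acyclic for all $m$ and then that $Z \otimes f$ is for all $Z$. The cleanest route is probably to first prove, by an entirely parallel induction using \cref{lem:decomp} to decompose $\square^1 \otimes (\square^1 \otimes^{m-1} f)$ (or by iterating the one-variable compatibility as one builds $\square^m = \square^1 \otimes \cdots \otimes \square^1$), that $\square \otimes f$ lands in $L_S$-acyclic maps, and then run the colimit-closure argument for general $Z$ — working first with $Z$ representable in a monoidally suitable site $\calS$ (so $Z \otimes f$ is literally a Gray tensor of complexes, analyzed as in \cref{thm:day-refl}), then with $Z \in \Psh(\calS)$ by density and the fact that $L_S$-acyclic maps are closed under the contractible colimits arising in a density presentation, and finally restricting along $\Cat_\omega^\inc \hookrightarrow \Psh(\calS)$. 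None of this is conceptually hard given \cref{lem:decomp}, \cref{lem:susp-tensor}, \cref{lem:fun-acyc}, and the two tensor-acyclicity lemmas, but care is needed to keep the suspension-index inequalities $n \geq N(C)$ consistent throughout the recursion.
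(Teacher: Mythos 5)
Your proposal is correct and follows essentially the same route as the paper: reduce via \cite[Proposition 4.1.7.4]{ha} to checking that tensoring the generators $\Sigma^n t_C$ ($n \geq N(C)$) with a cube is $L_S$-acyclic, reduce to $X = \square^1$ by density of the cubes and associativity, and then induct with \cref{lem:term-tensor-acyc} as base case and \cref{lem:susp-tensor-acyc} as inductive step, tracking at each stage that $\square^1 \otimes \Sigma^n t_C$ is acyclic for the localization at $\{\Sigma^m t_C \mid m \geq n\}$, which is contained in $S$ once $n \geq N(C)$. Two small slips worth fixing: acyclicity for $L_{\Sigma^{\ast+1}(\Sigma^n t_C)}$ \emph{implies} (not ``is implied by'') $L_S$-acyclicity in the relevant range, and one only gets $L_S$-acyclicity for $n \geq N(C)$, not ``for all $n \geq 0$'' — which is all that is needed; also note the $f \otimes \square^1$ side is not literally symmetric and requires dual versions of the lemmas, as you implicitly acknowledge.
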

\begin{proof}
By \cite[Proposition 4.1.7.4]{ha}, it suffices to show that, for every $f \in S$, both $X \otimes f$ and $f \otimes X$ are $L_S$-acyclic. Because the cubes are dense in $\Cat_\omega^\inc$, it suffices to treat the case where $X = \square^n$ is a cube. Because the tensor product is associative, by induction on $n$ it suffices to treat the case where $X = \square^1$. By \cref{lem:term-tensor-acyc}, $\square^1 \otimes t_C$ is $L_{\Sigma^\ast t_C}$-acyclic. By \cref{lem:susp-tensor-acyc} and induction on $n$, we obtain that $\square^1 \otimes \Sigma^n t_C$ is $L_{\Sigma^{\ast + n} t_C}$-acyclic. So for $n \geq N$, $\square^1 \otimes \Sigma^n t_C$ is $L_{\Sigma^{\ast + N}}$-acyclic, and hence $L_S$-acyclic as desired. The argument for $\Sigma^n f \otimes \square^1$ is similar, using dual lemmas to the above.
\end{proof}

\begin{cor}\label{cor:gray-reflect}
Let $n \in \natsw$, let $\underline{d} : [n] \to \Levw$ a weakly decreasing multi-level, and let $R,C \in \natsw$. Then $\Cat_{\underline{d}}^{\inc < R}$ and $\Cat_{\underline{d}}^{\inc <^\coind C}$ are exponential ideals in $\Cat_\omega^\inc$, and hence admit a reflected Gray tensor product with the universal property of \cref{thm:gray-reflect}.
\end{cor}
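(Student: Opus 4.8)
The plan is to deduce this as a direct application of \cref{thm:gray-reflect}, so the only real task is to verify that the localizing classes of maps defining $\Cat_{\underline d}^{\inc < R}$ and $\Cat_{\underline d}^{\inc <^\coind C}$ each have the syntactic form required by that theorem, namely that each is a union of sets of the shape $\{\Sigma^n t_C \mid n \geq N(C)\}$ with $t_C : C \to \square^0$ the unique map to the terminal object. First I would observe that an intersection of exponential ideals, cut out as localizations by classes $S_1, S_2$ of this shape, is the localization at $S_1 \cup S_1$, which is again of this shape; so it suffices to handle each of the three kinds of generators separately: the truncation maps $\Sigma^m t_{d(m)}$, the Rezk maps $\Sigma^r\rho$, and the coinduction maps $\Sigma^c\gamma$.

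For the truncation maps, recall from \cref{def:truncloc} that $\Cat_{\underline d}^\inc$ is the localization of $\Cat_n^\inc$ at $\{\Sigma^m t_{d(m)} \mid m \in [n]\}$, where $t_d : S^{d+1} \to [0]$ is the unique map. Here the crucial point is the hypothesis that $\underline d$ is \emph{weakly decreasing}: I would check that when $d(m)$ is weakly decreasing in $m$, the single map $\Sigma^m t_{d(m)}$ already forces all of $\Sigma^{m'} t_{d(m')}$ for $m' \geq m$ to be acyclic, because being $\Sigma^m t_{d(m)}$-local says hom-spaces in dimension $m$ with fixed boundary are $d(m)$-truncated, and truncatedness of a space implies truncatedness of its path spaces one level down — so a single $m$ together with the value $d(m)$ controls everything above it, and the decreasing condition is exactly what makes the bounds at higher $m'$ automatic (or at worst a subset of the maps already localized). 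Thus the set $\{\Sigma^m t_{d(m)} \mid m \in [n]\}$ generates the same localization as a union, over the relevant starting levels, of sets $\{\Sigma^{m} t_{d} \mid m \geq N\}$; incorporating also the category-level maps $\Sigma^k p$ (which are of the form $\Sigma^k t_C$ with $C = \globe_1$) handles the passage from $\Cat_n^\inc$ to $\Cat_\omega^\inc$. For the Rezk maps, $\Cat_n^{\inc < R}$ is by \cref{def:rezkloc} the localization at $\{\Sigma^r\rho \mid r \geq R\}$ with $\rho : I \to [0]$; this is literally already of the form $\{\Sigma^r t_I \mid r \geq R\}$. Likewise $\Cat_n^{\inc <^\coind C}$ is the localization at $\{\Sigma^c\gamma \mid c \geq C\}$ with $\gamma : J \to [0]$, literally of the form $\{\Sigma^c t_J \mid c \geq C\}$.

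Having matched the hypotheses, \cref{thm:gray-reflect} immediately gives that each of these localizations is compatible with the Gray tensor product, hence is an exponential ideal in $\Cat_\omega^\inc$, and that the reflected monoidal biclosed structure $\otimes_S$ on it enjoys the stated universal property. The main obstacle I anticipate is purely the bookkeeping in the previous paragraph: being careful that the weakly-decreasing hypothesis on $\underline d$ does the work of converting the finite index set $[n]$ into the "tail" sets $\{n \geq N(C)\}$ demanded by \cref{thm:gray-reflect}, and that combining the truncation, Rezk, and coinduction generators (and the category-level maps $\Sigma^k p$) still yields a union of sets of the required shape. No genuinely new ideas are needed beyond \cref{thm:gray-reflect} and the observation, already flagged in \cref{prop:susp-pres} and the remark following it, that weak monotonicity of $\underline d$ is precisely the condition making the acyclic class suspension-stable.
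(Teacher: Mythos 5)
Your proposal is correct and follows exactly the paper's (one-sentence) proof: the paper simply asserts that each of these localizations is of the form required by \cref{thm:gray-reflect}, and you supply the bookkeeping that this assertion elides — namely that the generating set $\{\Sigma^k p\mid k\ge n\}\cup\{\Sigma^m t_{d(m)}\mid m\in[n]\}\cup\{\Sigma^r\rho\mid r\ge R\}$ (resp.\ with $\gamma$ in place of $\rho$) generates the same localization as a union of tails $\{\Sigma^j t_C\mid j\ge N(C)\}$. One caveat on your justification for the truncation generators: for $m<m'\le n$ the space of $m'$-cells with fixed boundary is \emph{not} a path space of the space of $m$-cells (in the flagged setting it is independent Segal data — e.g.\ a Segal space with discrete object space can have arbitrary mapping spaces), so ``a single $m$ together with the value $d(m)$ controls everything above it'' is false as stated. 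What actually makes $\Sigma^{m'}t_{d(m)}$ acyclic in that range is just monotonicity of truncatedness: the generator $\Sigma^{m'}t_{d(m')}$ already in the set forces $d(m')$-truncatedness, and $d(m')\le d(m)$ gives $d(m)$-truncatedness — i.e.\ your parenthetical alternative is the real argument. The path-space mechanism is, however, exactly what is needed for $m'>n$ when $n$ is finite: there locality with respect to the category-level maps $\Sigma^k p$ identifies the $m'$-cell spaces with iterated path spaces of the $n$-cell spaces, which are therefore $(d(n)-(m'-n))$-truncated and a fortiori $d(m)$-truncated. With that repair the verification is complete and the corollary follows from \cref{thm:gray-reflect} as you say.
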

\begin{proof}
Each of these localizations is of the form required by \cref{thm:gray-reflect}.
\end{proof}

\begin{eg}
Such $\infty$-categories as $\Cat_n$, $\sCat_n$, $\Cat_{(m+n, n)}$, for $m,n \in \natsw$ etc. all admit a monoidal biclosed Gray tensor product.
\end{eg}

\section{Uniqueness of the Gray tensor product}\label{sec:unique}
In \cref{sec:gray}, we constructed the Gray tensor product on $\Cat_\omega^\inc$ and on various localizations thereof. This construction came with some kind of universal property (\cref{thm:day-refl}), but it was dependent on the choice of a \defterm{monoidally suitable site} (\cref{def:mon-suit}). In this section, we give several universal properties of these Gray tensor products which do not refer to auxiliary data such as a monoidally suitable site. We first give (\cref{thm:gray-unique} and \cref{cor:gray-unique1}) universal properties which depend on the choice of a dense subcategory $\calS \subseteq \Cat_\omega^\inc$ closed under the strict Gray tensor product -- a slightly weaker condition than being a monoidally suitable site. By \cite{campion-dense}, these universal properties apply in particular when $\calS = \square$ is the category of Gray cubes (cf. \cref{eg:cubes}). We regard this as a particularly canonical form of universal property for the Gray tensor product. But there are still some shortcomings (\cref{rmk:shortcomings}). So we also give a universal property (\cref{thm:gray-plain}) which is phrased in terms of the non-full subcategory $\square_\semi \subset \sCat_\omega$ of Gray cubes and subcomplex inclusions, i.e. the usual category of plain semicubical sets (\cref{def:semi}). This is a much smaller and more manageable category, still sufficient to detect the uniqueness of the Gray tensor product. Unfortunately, the uniqueness theorem in this case does not obviously descend along all of the localizations where the Gray tensor product is defined (\cref{eg:end}).

\begin{thm}\label{thm:gray-unique}
Let $L_S : \Cat_\omega^\inc {}^\to_\leftarrow L_S \Cat_\omega^\inc : i_S$ be a localization exhibiting $L_S \Cat_\omega^\inc$ as an exponential ideal in $\Cat_\omega^\inc$. Let $\TPar \subseteq \Cat_\omega^\inc$ be a full subcategory which is dense and closed under the Gray tensor product. Then the canonical functor $\TPar \to L_S \Cat_\omega^\inc$ is strong monoidal. Moreover, it enjoys the following two universal properties:
\begin{enumerate}
    \item\label{item:unique0.1} For any monoidal biclosed, cocomplete $\infty$-category $\calE$, composition with $\TPar \to L_S\Cat^\inc_\omega$ induces a fully faithful functor 
    \[\Fun_{E_1}^{\strong, \cocts}(L_S\Cat^\inc_\omega, \calE) \to \Fun_{E_1}^\strong(\TPar, \calE)\]
    whose essential image comprises those strong monoidal functors $\TPar \to \calE$ whose underlying functor extends to a cocontinuous functor $L_S \Cat_\omega^\inc \to \calE$.
    \item\label{item:unique0.2} The monoidal structure $\otimes_S$ on $L_S \Cat_\omega^\inc$ is the unique monoidal biclosed structure on $L_S \Cat_\omega^\inc$ such that $\TPar \to L_S \Cat_\omega^\inc$ is strong monoidal.
\end{enumerate}
\end{thm}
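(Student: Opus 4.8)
The plan is to leverage the standard universal property of Day convolution together with the assumption that $\TPar$ is dense in $\Cat_\omega^\inc$. First, since $\TPar \subseteq \Cat_\omega^\inc$ is dense, the nerve/restriction functor exhibits $\Cat_\omega^\inc$ as a (reflective accessible) localization of $\Psh(\TPar)$; call the localization functor $L_{\nat} : \Psh(\TPar) \to \Cat_\omega^\inc$. Because $\TPar$ is closed under the strict Gray tensor product, Day convolution equips $\Psh(\TPar)$ with a monoidal biclosed structure $\hat\otimes$ for which $\yo : \TPar \to \Psh(\TPar)$ is strong monoidal and which enjoys the universal property of \cref{thm:day-conv} (this is again just \cite[Proposition 4.8.1.10]{ha} applied to $\TPar$ with the Gray tensor product; the only input is that $\TPar$ is a monoidal $\infty$-category and $\Psh$ is the free cocompletion). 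So the first step is to record that $\Cat_\omega^\inc$, and then $L_S\Cat_\omega^\inc$, are exponential ideals in $\Psh(\TPar)$: the first by the argument of \cref{thm:day-refl} (one must check that $L_{\nat}$ is compatible with $\hat\otimes$, i.e.\ that $L_{\nat}(f \hat\otimes \id_Z)$ is an equivalence for $f$ a generating $L_{\nat}$-acyclic map — since $\Psh(\TPar)$ is generated under colimits by representables and $\TPar$ is $\otimes$-closed, this reduces to the statement that $(-)\otimes z$ on $\Cat_\omega^\inc$ sends $L_{\nat}$-acyclic maps to $L_{\nat}$-acyclic maps, which holds because $(-)\otimes z$ preserves contractible colimits and $\TPar$ is dense), and the second by composing with the hypothesis that $L_S\Cat_\omega^\inc$ is an exponential ideal in $\Cat_\omega^\inc$. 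An exponential ideal inside an exponential ideal is an exponential ideal, so $L_S\Cat_\omega^\inc$ is an exponential ideal in $\Psh(\TPar)$, and the composite localization $\Psh(\TPar) \to \Cat_\omega^\inc \to L_S\Cat_\omega^\inc$ is strong monoidal. In particular $\TPar \to L_S\Cat_\omega^\inc$ is strong monoidal, being the composite of the strong monoidal $\yo$ with a strong monoidal localization.

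Next, for the universal property \eqref{item:unique0.1}: by \cite[Proposition 4.1.7.4]{ha}, precomposition with the strong monoidal localization $L : \Psh(\TPar) \to L_S\Cat_\omega^\inc$ identifies $\Fun_{E_1}^{\strong,\cocts}(L_S\Cat_\omega^\inc,\calE)$ with the full subcategory of $\Fun_{E_1}^{\strong,\cocts}(\Psh(\TPar),\calE)$ on those functors inverting the acyclic maps. Composing with the Day convolution universal property of \cref{thm:day-conv} (which identifies $\Fun_{E_1}^{\lax,\cocts}(\Psh(\TPar),\calE) \simeq \Fun_{E_1}^\lax(\TPar,\calE)$, and restricts to strong monoidal functors on both sides), we get that $\Fun_{E_1}^{\strong,\cocts}(L_S\Cat_\omega^\inc,\calE)$ is identified with the full subcategory of $\Fun_{E_1}^\strong(\TPar,\calE)$ on those strong monoidal functors $\TPar \to \calE$ whose unique cocontinuous extension $\Psh(\TPar)\to\calE$ inverts the acyclic maps. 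The remaining bookkeeping step is to check that ``the cocontinuous extension to $\Psh(\TPar)$ inverts the acyclic maps'' is equivalent to ``the underlying functor $\TPar\to\calE$ extends to a cocontinuous functor $L_S\Cat_\omega^\inc\to\calE$'' — one direction is immediate, and the other uses that a cocontinuous functor out of $L_S\Cat_\omega^\inc$ precomposed with $L$ is a cocontinuous functor out of $\Psh(\TPar)$ inverting the acyclics, and that the extension is unique because $\TPar$ generates $\Psh(\TPar)$ under colimits. Note $\calE$ being monoidal biclosed and cocomplete means $\otimes$ automatically preserves colimits in each variable, so the hypothesis of \cref{thm:day-conv} and \cref{thm:day-refl} is met.

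Finally, for the uniqueness statement \eqref{item:unique0.2}: suppose $\otimes'$ is another monoidal biclosed structure on $L_S\Cat_\omega^\inc$ making $\TPar \to L_S\Cat_\omega^\inc$ strong monoidal. Apply \eqref{item:unique0.1} with $\calE = (L_S\Cat_\omega^\inc, \otimes')$: the strong monoidal functor $\TPar \to L_S\Cat_\omega^\inc$ (underlying the canonical one, which tautologically extends cocontinuously to the identity on $L_S\Cat_\omega^\inc$) is in the essential image, so it extends to a strong monoidal cocontinuous functor $\hat F : (L_S\Cat_\omega^\inc,\otimes_S) \to (L_S\Cat_\omega^\inc,\otimes')$ whose restriction to $\TPar$ is the canonical inclusion. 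Since $\TPar$ is dense in $\Cat_\omega^\inc$ hence its image generates $L_S\Cat_\omega^\inc$ under colimits, and $\hat F$ is cocontinuous and restricts to the identity on $\TPar$, $\hat F$ is naturally equivalent to $\id_{L_S\Cat_\omega^\inc}$; being strong monoidal, it is then a monoidal equivalence $(L_S\Cat_\omega^\inc,\otimes_S) \simeq (L_S\Cat_\omega^\inc,\otimes')$. I expect the main obstacle to be the careful identification of essential images in step \eqref{item:unique0.1} — in particular pinning down exactly which strong monoidal functors on $\TPar$ extend, and verifying that the two localizations (Day-to-$\Cat_\omega^\inc$ and then to $L_S\Cat_\omega^\inc$) compose compatibly with the monoidal structures; the density-plus-cocontinuity rigidity argument in step \eqref{item:unique0.2} and the exponential-ideal transitivity in the first step are comparatively routine.
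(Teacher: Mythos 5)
Your proposal is correct and follows essentially the same route as the paper, whose own proof is a one-line appeal to \cite[Proposition 4.1.7.4]{ha} in the style of \cref{thm:gray-reflect} and \cref{cor:day-refl}: Day-convolve on $\Psh(\TPar)$, identify $L_S\Cat_\omega^\inc$ as an exponential ideal therein by transitivity, and then run the density-plus-cocontinuity rigidity argument for uniqueness. One justification should be tightened. You assert that $\Cat_\omega^\inc$ is an exponential ideal in $(\Psh(\TPar),\hat\otimes)$ for an \emph{arbitrary} dense $\otimes$-closed $\TPar$ because ``$(-)\otimes z$ preserves contractible colimits and $\TPar$ is dense''; as stated this is not a proof --- a cocontinuous endofunctor of $\Psh(\TPar)$ need not carry $L_\nat$-acyclic maps to $L_\nat$-acyclic maps, and for a general dense $\TPar$ (e.g.\ $\TPar=\square$, the case of interest) one has no explicit generating set of acyclic maps to test against, unlike in \cref{thm:day-refl} where the generators $J_\TPar$ and the result of \cite{ara-lucas} are available. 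The correct argument at this point uses that the Gray tensor product on $\Cat_\omega^\inc$ is already known to exist and to be biclosed (\cref{thm:day-refl} applied to some monoidally suitable site): the tautologically strong monoidal inclusion $\TPar\hookrightarrow\Cat_\omega^\inc$ lands in a target whose tensor preserves colimits in each variable, so by the universal property of Day convolution its cocontinuous extension $L_\nat:\Psh(\TPar)\to\Cat_\omega^\inc$ is strong monoidal; then $L_\nat(f\hat\otimes \yo z)\simeq L_\nat(f)\otimes z$ is an equivalence whenever $f$ is $L_\nat$-acyclic, which is exactly the compatibility needed. With that substitution the rest of your argument --- the identification of the essential image in (\ref{item:unique0.1}) and the uniqueness argument in (\ref{item:unique0.2}) --- goes through as written.
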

\begin{proof}
Just as in \cref{thm:gray-reflect}, this follows from \cite[Proposition 4.1.7.4]{ha}.
\end{proof}

\begin{rmk}
In \cref{thm:gray-unique}, we may take $\TPar$ to be a monoidally suitable site. In this case, we may sharpen the statement of \cref{thm:gray-unique}(\ref{item:unique0.1}) to the following. For any monoidal biclosed, cocomplete $\infty$-category $\calE$, the image of the fully faithful functor $\Fun_{E_1}^{\strong, \cocts}(L_S\Cat^\inc_\omega, \calE) \to \Fun_{E_1}^\strong(\TPar, \calE)$ comprises those strong monoidal functors $\TPar \to \calE$ which carry monic pushouts along folk cofibrations to pushouts.
\end{rmk}

\begin{eg}\label{eg:cubes}
In \cref{thm:gray-unique}, we may take $\TPar = \square$, the category of Gray cubes. Recall that $\square$ is not a monoidally suitable site because $\Theta \not \subseteq \square$. It is very \emph{nearly} suitable, though, as it is closed under the Gray tensor product by construction, and the main result of \cite{campion-dense} says that $\Theta$ is contained in the idempotent completion of $\square$. This is enough to deduce (as in \cite{campion-dense}) that $\square$ is dense in $\Cat_\omega^\inc$, so that \cref{thm:gray-unique} applies.
\end{eg}

\begin{rmk}\label{rmk:pre-shortcomings}
The $\infty$-category $\calS$ in \cref{thm:gray-unique} may be replaced with $L_S \calS$ (the image of $\calS$ under the functor $L_S : \Cat_\omega^\inc \to L_S \Cat_\omega^\inc$, viewed as a full subcategory of $L_S \Cat_\omega^\inc$). Then an analogous theorem holds: the functor $L_S \calS \to L_S \Cat_\omega^\inc$ is strong monoidal, with an analogous universal property to (1), and an analogous uniqueness property to (2). This statement is in some ways preferable to the given statement of \cref{thm:gray-unique} because the functor $L_S \calS \to L_S \Cat_\omega^\inc$ is fully faithful. However some caution is warranted because it may not be straightforward in general to compute the localization $L_S \calS$ (cf. \cref{rmk:shortcomings} below).
\end{rmk}

\begin{eg}\label{eg:favs}
Let $n \in \natsw$, let $\underline{d} : [n] \to \Levw$ a weakly decreasing multi-level, and let $R,C \in \natsw$. In \cref{thm:gray-unique}, we may take $L_S \Cat_\omega^\inc = \Cat_{\underline{d}}^{\inc < R}$ or $L_S \Cat_\omega^\inc = \Cat_{\underline{d}}^{\inc <^\coind C}$.
\end{eg}

Combining the two specializations (\cref{eg:cubes} and \cref{eg:favs}) of \cref{thm:gray-unique}, we obtain the following more specific uniqueness statements:

\begin{cor}\label{cor:gray-unique1}
Let $n \in \natsw$, let $\underline{d} : [n] \to \Levw$ be a weakly decreasing multi-level, and let $R,C \in \natsw$. Then the canonical functor $\square \to \Cat_{\underline{d}}^{\inc < R}$ is strong monoidal, and enjoys the following two universal properties:
\begin{enumerate}
    \item\label{item:unique1.1} For any monoidal biclosed, cocomplete $\infty$-category $\calE$, composition with $\square \to \Cat_{\underline{d}}^{\inc < R}$ induces a fully faithful functor 
    \[\Fun_{E_1}^{\strong, \cocts}(\Cat^{\inc < R}_{\underline{d}}, \calE) \to \Fun_{E_1}^\strong(\square, \calE)\]
    whose essential image comprises those strong monoidal functors $\square \to \calE$ whose underlying functor extends to a cocontinuous functor $\Cat_{\underline{d}}^{\inc < R} \to \calE$.
    \item\label{item:unique1.2} The Gray tensor product $\otimes$ on $\Cat_{\underline{d}}^{\inc < R}$ is the unique monoidal biclosed structure on $\Cat_{\underline{d}}^{\inc < R}$ such that $\square \to \Cat_{\underline{d}}^{\inc < R}$ is strong monoidal.
\end{enumerate}
Similar statements hold for $\square \to \Cat_{\underline{d}}^{\inc <^\coind C}$.
\end{cor}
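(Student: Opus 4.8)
The plan is to deduce \cref{cor:gray-unique1} by specializing \cref{thm:gray-unique} in the two directions already indicated in \cref{eg:cubes} and \cref{eg:favs}.

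First I would take $\TPar = \square$, the category of Gray cubes, regarded as a full subcategory of $\Cat_\omega^\inc$ via the fully faithful nerve. To invoke \cref{thm:gray-unique} we need $\square$ to be dense in $\Cat_\omega^\inc$ and closed under the Gray tensor product. Closure holds essentially by construction: $\square^m \otimes \square^n \simeq \square^{m+n}$, and since $\square$ is contained in a monoidally suitable site (for instance the strongly loop-free Steiner complexes of \cref{eg:mon-suit}), the restriction of $\otimes$ to $\square$ is the usual strict Gray tensor product by \cref{cor:day-refl}. Density of $\square$ in $\Cat_\omega^\inc$ is the main result of \cite{campion-dense}, recalled in \cref{eg:cubes}: although $\Theta \not\subseteq \square$, the category $\Theta$ lies in the idempotent completion of $\square$, and that suffices. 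This density statement is the single genuinely non-formal ingredient, so I expect it to be the crux of the argument; everything downstream is the formal manipulation of \cite[Proposition 4.1.7.4]{ha} that is already packaged inside \cref{thm:gray-unique}.

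Second I would identify the target localization. Since $\underline{d}$ is weakly decreasing, \cref{cor:gray-reflect} shows that $\Cat_{\underline{d}}^{\inc < R}$ (and likewise $\Cat_{\underline{d}}^{\inc <^\coind C}$) is an exponential ideal in $\Cat_\omega^\inc$, arising as $L_S \Cat_\omega^\inc$ for a suspension-stable localizing set $S$ of exactly the type appearing in \cref{thm:gray-unique} --- this is the content of \cref{eg:favs}. Feeding this $L_S$ together with $\TPar = \square$ into \cref{thm:gray-unique} then produces, from its first universal property, the fully faithful functor of \cref{cor:gray-unique1} and the description of its essential image as the strong monoidal functors $\square \to \calE$ whose underlying functor extends to a cocontinuous functor on $\Cat_{\underline{d}}^{\inc < R}$ (we use the general, unsharpened form here, since $\square$ is not itself a monoidally suitable site); and from its uniqueness statement, the fact that $\otimes$ is the unique monoidal biclosed structure on $\Cat_{\underline{d}}^{\inc < R}$ making $\square \to \Cat_{\underline{d}}^{\inc < R}$ strong monoidal. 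The coinductive case is handled identically, invoking \cref{cor:gray-reflect} once more for the exponential ideal property of $\Cat_{\underline{d}}^{\inc <^\coind C}$; no new obstacle arises beyond the density input discussed above.
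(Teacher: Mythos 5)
Your proposal is correct and matches the paper's own route exactly: the corollary is obtained by specializing \cref{thm:gray-unique} to $\TPar = \square$ (density via \cite{campion-dense} as in \cref{eg:cubes}, closure under $\otimes$ by construction) and to the exponential-ideal localizations $\Cat_{\underline{d}}^{\inc < R}$ and $\Cat_{\underline{d}}^{\inc <^\coind C}$ supplied by \cref{cor:gray-reflect} as in \cref{eg:favs}. Nothing further is needed.
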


\begin{eg}
The canonical functors $\square \to \Cat_n$, $\square \to \sCat_n$, $\square \to \Cat_{(m+n,n)}$ for $m,n \in \natsw$ are all strong monoidal with respect to the Gray tensor product, and the Gray tensor product is the unique monoidal biclosed structure making these functors strong monoidal.
\end{eg}

\begin{rmk}\label{rmk:shortcomings}
In the statement of \cref{cor:gray-unique1}, the $\infty$-category $\square$ in \cref{cor:gray-unique1} may be replaced with $L_{\underline{d}}^{\inc < R} \square$ or $L_{\underline{d}}^{\inc <^\coind C} \square$, to obtain a universal property for the Gray tensor product referring only to a small full monoidal subcategory of $\Cat_{\underline{d}}^{\inc < R}$ or $\Cat_{\underline{d}}^{\inc <^\coind C}$ (cf. \cref{rmk:pre-shortcomings}). However, care is warranted in computing the localization $L_{\underline{d}}^{\inc < R} \square$ or $L_{\underline{d}}^{\inc <^\coind C} \square$. For example, we do not know whether the objects of $L_2 \square$ are strict 2-categories. Consequently, we are unable to recover the main result of \cite{campion-maehara}, which says that there is a unique monoidal biclosed structure on $\Cat_2$ such that $L_2^s \square \to \Cat_2$ is strong monoidal, where $L_2^s \square$ is the reflection of $\square$ in $\sCat_2$. What we can deduce from \cref{cor:gray-unique1} is that there is a unique monoidal biclosed structure on $\Cat_2$ such that $L_2 \square \to \Cat_2$ is strong monoidal, and our knowledge of $L_2 \square$ is less explicit than our knowledge of $L_2^s \square$ (though it is quite likely that they do in fact coincide).
\end{rmk}

In the face of the shortcomings of \cref{cor:gray-unique1} identified in \cref{rmk:shortcomings}, we seek to provide some small comfort by characterizing the Gray tensor product using a smaller, non-full subcategory $\square_\semi \subset \square$.

\begin{Def}\label{def:semi}
Let $\square_\semi$ denote the plain semi-cube category. This is the free monoidal category on a bipointed object. Equivalently, it is the wide monoidal subcategory of $\square$ generated by the two face maps $i_0,i_1 : \square^0 \rightrightarrows \square^1$. Equivalently, $\square_\semi \subset \square$ is the category of Gray cubes and subcomputad inclusions between them.
\end{Def}


\begin{lem}\label{lem:po}
Let $\TPar$ be a suitable site and $\calC$ an $\infty$-category. Let $r : \square^m \to \globe_m$ be a split epimorphism. Then any functor $F : \TPar \to \calC$ which preserves monic pushouts along folk cofibrations also preserves the pushout $\globe_m = \square^m \ast_{\partial \square^m} \partial \globe_m$.
\end{lem}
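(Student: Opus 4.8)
The plan is to present the pushout $\globe_m = \square^m \ast_{\partial\square^m}\partial\globe_m$ as the bottom storey of a two-storey diagram whose top storey is a single cell-attachment of the kind whose image $F$ is assumed to preserve, and whose outer rectangle is a pushout along the identity; the pasting law for pushouts then yields the claim. (Implicit in the statement is that $\square^m\in\TPar$, so that $F(\square^m)$ makes sense; its boundary $\partial\square^m$ then also lies in $\TPar$, since suitable sites are closed under subcomplexes, while $\globe_m,\partial\globe_m\in\Theta\subseteq\TPar$.)

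First I would record the combinatorial description of the Gray cube as a torsion-free computad: $\square^m$ has a unique non-identity $m$-dimensional generator $e\colon\globe_m\to\square^m$, and $\partial\square^m$ is obtained from $\square^m$ by discarding it, i.e. $\partial\square^m$ is the $(m-1)$-skeleton. Thus
\[
\begin{tikzcd}
\partial\globe_m \ar[r,hook] \ar[d,"\partial e"'] & \globe_m \ar[d,"e"] \\
\partial\square^m \ar[r,hook] & \square^m
\end{tikzcd}
\]
is a pushout in $\sCat_\omega$ with monic attaching map $\partial e$. Since $\partial\globe_m\hookrightarrow\globe_m$ is a generating folk cofibration and all four objects lie in $\TPar$, this is a monic pushout along a folk cofibration of the form appearing in \cref{def:j-square}; hence $F$ carries it to a pushout in $\calC$ by hypothesis.

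Next I would exploit that $r$ is a split epimorphism. Its section makes $r$ surjective on cells, so some cell of $\square^m$ is sent to the unique non-identity $m$-cell of $\globe_m$; as a strict functor cannot raise dimension, that cell is $e$, whence $r\circ e=\id_{\globe_m}$. Moreover $r$ carries $\partial\square^m$ into $\partial\globe_m$ (again by a dimension count), so it restricts to $\bar r\colon\partial\square^m\to\partial\globe_m$, and from $r\circ e=\id$ together with the monicity of $\partial\globe_m\hookrightarrow\globe_m$ one gets $\bar r\circ\partial e=\id_{\partial\globe_m}$. Stacking the cell-attachment square above the square $\globe_m=\square^m\ast_{\partial\square^m}\partial\globe_m$ then gives
\[
\begin{tikzcd}
\partial\globe_m \ar[r,hook] \ar[d,"\partial e"'] & \globe_m \ar[d,"e"] \\
\partial\square^m \ar[r,hook] \ar[d,"\bar r"'] & \square^m \ar[d,"r"] \\
\partial\globe_m \ar[r,hook] & \globe_m
\end{tikzcd}
\]
in which both vertical composites are identities, so the outer rectangle is a pushout; and, since its verticals remain identities after applying $F$, so is its image under $F$. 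Applying $F$ and invoking the pasting law for pushouts (the top square becomes a pushout by the previous paragraph, the outer rectangle is a pushout), the bottom square also becomes a pushout, i.e. $F$ preserves $\globe_m=\square^m\ast_{\partial\square^m}\partial\globe_m$.

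The only step that is more than formal bookkeeping is the first one: checking that $\partial\square^m\hookrightarrow\square^m$ genuinely is a single cell-attachment with monic attaching map, so that it falls under the hypothesis on $F$. This is the standard presentation of the Gray cube as a (torsion-free) computad — see \cite{forest}, \cite{steiner} and \cite[Section~2]{campion-paste} — and it is exactly the place where the assumption that $F$ preserve such pushouts is used; everything else is the pasting law together with the observation, extracted from the split epimorphism $r$, that the top cell of $\square^m$ is carried isomorphically onto the top cell of $\globe_m$.
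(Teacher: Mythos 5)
Your proof is correct and is essentially the paper's own argument: the paper likewise preserves the cell-attachment square $\square^m = \partial\square^m \ast_{\partial\globe_m}\globe_m$ by hypothesis, observes that the composite with the square in question is a pushout along the identity (hence preserved by any functor), and concludes by cancellation/pasting. You simply spell out in more detail the verification that the split epimorphism forces the vertical composites to be identities.
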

\begin{proof}
This follows by cancellation with the monic pushout square $\square^m = \partial \square^m \ast_{\partial\globe_m} \globe_m$, since the composite is a pushout along the identity, preserved by any functor.
\end{proof}

\begin{prop}\label{prop:plain}
Let $\TPar$ be a monoidally suitable site. The Gray tensor product on $\TPar$ is the unique monoidal structure which preserves monic pushouts along folk cofibrations in each variable such that the inclusion $\square_\semi \to \TPar$ is strong monoidal.
\end{prop}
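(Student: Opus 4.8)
The plan is to treat existence and uniqueness separately; the uniqueness half is where \cref{lem:po} and the strong monoidality of $\square_\semi$ do the real work.

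For existence I would take the monoidal structure to be the strict Gray tensor product $\otimes$, restricted to $\TPar$ (well defined since $\TPar$ is monoidally suitable). As $\otimes$ is biclosed on $\sCat_\omega$, each functor $(-)\otimes z$ preserves all colimits, hence in particular the monic pushouts along folk cofibrations of \cref{def:j-square}; this is the same fragment of \cite{ara-lucas} already used in the proof of \cref{thm:day-refl}. For strong monoidality of the inclusion $\square_\semi\to(\TPar,\otimes)$ I would invoke \cref{def:semi}: $\square_\semi$ is the free monoidal category on a bipointed object, the cube $\square^1$ with its two vertices $\partial\square^1\rightrightarrows\square^1$ is such an object in $(\TPar,\otimes)$, and because $\square^n=(\square^1)^{\otimes n}$ with the face maps sent to face maps, the classifying strong monoidal functor is exactly the inclusion.

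For uniqueness, let $\otimes'$ be any monoidal structure on $\TPar$ preserving monic pushouts along folk cofibrations in each variable and with $\square_\semi\to(\TPar,\otimes')$ strong monoidal. Strong monoidality of $\square_\semi$ at once gives $I_{\otimes'}=\square^0$ and coherent isomorphisms $\square^m\otimes'\square^n\cong\square^{m+n}\cong\square^m\otimes\square^n$, and in particular $\square^m\cong(\square^1)^{\otimes' m}$. The core of the argument is to identify the endofunctor $\square^1\otimes'(-)$ of $\TPar$ with $\square^1\otimes(-)$, by induction on dimension. Since $\square^1\otimes'(-)$ preserves monic folk-cofibration pushouts it commutes with the skeletal cell-attachment presentation of any torsion-free complex, so it is pinned down by its values on globes together with the attachment maps; it thus suffices to compute $\square^1\otimes'\globe_m$. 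Here \cref{lem:po}, applied to the pushout-preserving functor $\square^1\otimes'(-)$ and the split epimorphism $r\colon\square^m\to\globe_m$, yields
\[\square^1\otimes'\globe_m\;=\;\square^{1+m}\ast_{\,\square^1\otimes'\partial\square^m\,}\bigl(\square^1\otimes'\partial\globe_m\bigr),\]
and since $\partial\square^m$ and $\partial\globe_m$ are built from globes of dimension $<m$ by cell attachments, the two gluing objects are fixed by the inductive hypothesis. Hence $\square^1\otimes'(-)\cong\square^1\otimes(-)$; iterating, $\square^m\otimes'(-)\cong\square^m\otimes(-)$ for every $m$. Applying \cref{lem:po} once more, now using preservation of pushouts in the first variable, writes $\globe_m\otimes'(-)$ in terms of $\square^m\otimes'(-)$ and lower-dimensional globes, so $\globe_m\otimes'(-)\cong\globe_m\otimes(-)$. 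Finally, any $X\in\TPar$ is an iterated monic pushout of globes along folk cofibrations, preserved by $(-)\otimes'Y$, so $X\otimes'Y$ is the corresponding colimit of the $\globe_d\otimes'Y\cong\globe_d\otimes Y$, naturally in $X$ and $Y$; carrying the same bookkeeping through the associativity and unit constraints assembles a monoidal equivalence $(\TPar,\otimes')\simeq(\TPar,\otimes)$.

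The hard part is the last, essentially bureaucratic, step: verifying that this recursion really determines $\otimes'$ as a \emph{coherent bifunctor} — that $\TPar$ is generated by its globes (and, through \cref{lem:po}, its cubes) under monic folk-cofibration pushouts strongly enough to pin down a pushout-preserving bifunctor on morphisms, not just objects, together with its coherence data. The only genuine subtlety beyond diagram chasing is the apparent circularity between $\globe_m\otimes'(-)$ and $\square^m\otimes'(-)$ (each presents the other as a folk cell attachment); this is broken by the identity $\square^m\cong(\square^1)^{\otimes' m}$ from strong monoidality of $\square_\semi$, which gives an independent foothold on $\square^1\otimes'(-)$ from which to start the induction.
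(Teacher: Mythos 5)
Your existence half is fine (and matches what the paper leaves implicit: the strict Gray tensor product restricted to $\TPar$ preserves the relevant pushouts by \cite{ara-lucas}, and $\square_\semi\to(\TPar,\otimes)$ is strong monoidal essentially by construction), and your opening reductions for uniqueness --- pin down the unit and the cubes via strong monoidality of $\square_\semi$, reduce to the single endofunctor $\square^1\otimes'(-)$, and pass between cubes and globes via \cref{lem:po} --- are exactly the reductions the paper makes. But there is a genuine gap where you declare the remaining step ``essentially bureaucratic'': determining a pushout-preserving functor from its values on objects and on subcomputad inclusions does \emph{not} determine it on general morphisms of $\TPar$, and those general morphisms are unavoidable. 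They enter already in your own induction: the attaching maps $\partial\globe_m\to X_k$ in a cell presentation, and the collapse $\partial\square^m\to\partial\globe_m$ in the pushout of \cref{lem:po}, are not monomorphisms, so ``the two gluing objects are fixed by the inductive hypothesis'' is not enough --- you need $\square^1\otimes'$ of these \emph{maps}, and nothing in your setup supplies that.

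This is precisely where the paper's proof spends most of its effort. It reduces (by density and cell decomposition) to determining $\square^1\otimes'$ on morphisms $\globe_m\to\theta$ with $\theta\in\Theta$, observes that by induction only the image of the top-dimensional cell is in question, and then uses the Reedy factorization of $\Theta$: for surjective maps the top cell must go to a degenerate cell, and for injective inner face maps one needs the genuinely combinatorial claim that the ``big cell'' of $\square^1\otimes\theta$ is the \emph{unique} cell with its prescribed boundary (proved by decomposing the big cell and inspecting the $(m+1)$-cells of $\square^1\otimes(\globe_m\cup_{\globe_{|\alpha|}}\globe_k)$). None of this is diagram bookkeeping --- if that uniqueness-of-the-big-cell claim failed, the proposition would be false --- so your proof is incomplete until you supply an argument for the action of $\otimes'$ on degeneracies and on the inner face maps of $\Theta$.
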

\begin{proof}
First, recall that every object of $\TPar$ may be built up via monic pushouts along $\partial \globe_m \to \globe_m$. By \cref{lem:po}, this means that our conditions determine $\otimes$ on objects, and in fact on the wide subcategory of subcomputad inclusions. This includes all of the coherence data for the monoidal structure. So it remains only to show that the bifunctor $\otimes$ is uniquely determined on more general morphisms. Factoring an arbitrary morphism $f \otimes g = (f \otimes \id)(\id \otimes g)$, we reduce to showing that the 1-variable functors $A \otimes (-)$ and $(-) \otimes A$ are uniquely determined for each $A \in \TPar$; we treat the former, as the latter is similar. Because $A$ is built up from cells and $\otimes$ is assumed to be compatible with these cell attachments, it suffices to treat the case where $A$ is a globe, or alternatively by \cref{lem:po}, the case where $A$ is a cube. By associativity of $\otimes$, it in fact suffices to treat the case where $A = \square^1$. Thus we are reduced to showing that the endofunctor $\square^1 \otimes (-)$ is uniquely determined on all morphisms. By density, it suffices to treat the case of a morphism $\zeta \to \theta$ with $\zeta,\theta \in \Theta$. Decomposing $\zeta$ as a colimit of cells, we reduce to the case where $\zeta = \globe_m$ is a globe. We may assume by induction that maps out of $W \otimes Z$ are determined when $W \otimes Z$ is lower-dimensional. In particular, maps out of $\partial \square^1 \otimes \globe_m$ and $\square^1 \otimes \partial \globe_m$ are determined, so that the map out of $\partial \square^1 \otimes \globe_m \cup_{\partial \square^1 \otimes \partial \globe_m} \square^1 \otimes \partial \globe_m$ is determined, and we just need to decide where to send the one remaining (top-dimensional) cell, knowing where its boundary is sent. Because $\Theta$ has Reedy factorizations, it suffices to consider the cases where $\globe_m \to \theta$ is injective or surjective.

If $\globe_m \to \theta$ is surjective, then we may assume that it is the degeneracy $\globe_m \to \globe_{m-1}$. In this case, the remaining cell of $\square^1 \otimes \globe_m$ is carried to a degenerate cell, so its image is uniquely determined by its boundary.

If $\globe_m \to \theta$ is injective, we may further factor through any outer face map (i.e. subcomputad inclusion) $\theta' \to \theta$ to assume that $\globe_m \to \theta$ is an inner face map. That is, $\globe_m \to \theta$ is the inclusion of the ``big cell" of $\theta$. We claim that in fact, the ``big cell" of $\square^1 \otimes \theta$ is the unique cell with its given boundary, so that must be where the big cell of $\square^1 \otimes \globe_m$ is carried. Otherwise, the big cell $\mu$ of $\theta$ decomposes nontrivially as $\mu = \mu_1 \circ_\alpha \mu_2$. So $\mu : \globe_m \to \theta$ factors as $\globe_m \to \globe_m \cup_{\globe_{|\alpha|}} \globe_k \xrightarrow{\mu_1 \cup \mu_2}\theta_1 \cup_{\globe_{|\alpha|}} \theta_2$. By induction, the maps $\square^1 \otimes \mu_i$ are determined, so the map $\mu_1 \cup \mu_2$ is as well by compatibility with monic pushouts along folk cofibrations. Thus we are reduced to the case where $\theta = \globe_m \cup_{|\alpha|} \globe_k$ (this includes as a degenerate case the case $\theta = \globe_k$). And indeed, in this case the boundary of the big cell of $\square^1 \otimes (\globe_m \cup_{|\alpha|} \globe_k)$ is $m$-dimensional and is not the boundary of an endomorphism, so any cell with this boundary must be $(m+1)$-dimensional. We can run through all of the $(m+1)$-dimensional cells explicitly and see that none of them have the same boundary as the ``big cell" except for the ``big cell" itself.
\end{proof}

\begin{thm}\label{thm:gray-plain}
Let $L_S \Cat_\omega^\inc$ be an exponential ideal in $\Cat_\omega^\inc$ such that $\TPar \to L_S \Cat_\omega^\inc$ is fully faithful for some monoidally suitable site $\TPar$. Then the Gray tensor product is the unique mononidal biclosed structure such that the inclusion $\square_\semi \to L_S \Cat_\omega^\inc$ is strong monoidal.
\end{thm}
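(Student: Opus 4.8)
The plan is to combine the uniqueness statement of \cref{thm:gray-unique} with \cref{prop:plain}. Existence is essentially immediate: since $\calE := L_S\Cat_\omega^\inc$ is an exponential ideal, \cite[Prop.~4.1.7.4]{ha} produces a monoidal biclosed structure $\otimes_S$ on $\calE$ with $L_S : \Cat_\omega^\inc \to \calE$ strong monoidal (the mechanism already used in \cref{thm:day-refl}); composing with the functor $\square \hookrightarrow \TPar \to \Cat_\omega^\inc$, which is strong monoidal by \cref{cor:day-refl} for any monoidally suitable site $\TPar$, and restricting along the wide monoidal subcategory $\square_\semi \subseteq \square$, exhibits $\square_\semi \to \calE$ as strong monoidal for $\otimes_S$. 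For uniqueness, let $\otimes'$ be any monoidal biclosed structure on $\calE$ making $\square_\semi \to \calE$ strong monoidal. Applying \cref{thm:gray-unique} with the dense (by \cite{campion-dense}), Gray-closed, full subcategory $\square \subseteq \Cat_\omega^\inc$ (cf.\ \cref{eg:cubes}), it suffices to show that $\square \to \calE$ is strong monoidal for $\otimes'$; that forces $\otimes' \simeq \otimes_S$ as monoidal structures.

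So the real content is to upgrade ``$\square_\semi \to \calE$ strong monoidal'' to ``$\square \to \calE$ strong monoidal'', using only that $\otimes'$ is biclosed, hence cocontinuous separately in each variable. I would do this by rerunning the proof of \cref{prop:plain} inside $\calE$. The one extra ingredient needed is that the cell-attachment pushouts $A \cup_{\partial\globe_m}\globe_m$ of torsion-free complexes are genuine pushouts in $\Cat_\omega^\inc$ by \cref{cor:locn} (the main theorem of \cite{campion-paste}), hence also in $\calE$, since $L_S$ is cocontinuous and carries torsion-free complexes to their images under the fully faithful $\TPar \to \calE$; consequently $\otimes'$ preserves them, which is exactly what ``preserves monic pushouts along folk cofibrations'' was supplied as a hypothesis in \cref{prop:plain}. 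Given this, the argument of \cref{prop:plain} transports: by associativity it suffices to treat $\square^1 \otimes' (-)$; on objects and subcomputad inclusions of torsion-free complexes it is determined by $\square_\semi$ strong monoidal together with \cref{lem:po} and the fact that every torsion-free complex is built from cubes by such cell attachments, and a dimension induction identifies the value with the strict Gray tensor (so in particular $\square^1 \otimes' X$ lies in $\TPar$); on an arbitrary morphism one follows \cref{prop:plain} verbatim, reducing via density of $\Theta$ to $\zeta \to \theta$ in $\Theta$, then to $\globe_m \to \theta$, then Reedy-factoring --- the surjective case giving a degenerate cell (determined by its boundary) and the injective case reducing, through outer faces, to the combinatorial fact proved at the close of \cref{prop:plain} that the ``big cell'' of $\square^1 \otimes \theta$ is the unique cell with its prescribed boundary. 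This pins down $\square^1 \otimes' (-)$ and identifies it with $\square^1 \otimes_S(-)$ on all of $\TPar$; restricting to $\square$ and using associativity --- the coherence data being fixed on subcomputad inclusions, as in \cref{prop:plain} --- assembles the desired strong monoidal structure on $\square \to \calE$.

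The main obstacle is precisely this transport step. Because $\otimes'$ is a priori controlled only on the non-full subcategory $\square_\semi$, one must verify that a single induction on dimension simultaneously (i) identifies $\square^1 \otimes' X$ with the torsion-free complex $\square^1 \otimes X$ --- which is what makes the words ``cells'' and ``boundary'' of $\square^1 \otimes' \theta$ meaningful in the first place, so that the big-cell argument can even be invoked --- (ii) pins down $\square^1 \otimes'(-)$ on every morphism, and (iii) is compatible with the associativity and unit isomorphisms. A minor bookkeeping point is that, since $\TPar \to \calE$ is fully faithful, the mapping spaces between torsion-free complexes in $\calE$ coincide with those in $\Cat_\omega^\inc$, so the density of $\Theta$ in $\Cat_\omega^\inc$ is all that is needed to carry out the reductions above inside $\calE$.
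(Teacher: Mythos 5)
Your proposal is correct and takes essentially the same route as the paper: reduce, via the uniqueness statement of \cref{thm:gray-unique} (the paper cites \cref{cor:day-refl}), to showing that the restriction of the putative structure $\otimes'$ to the cubes (equivalently to the monoidally suitable site $\TPar$) agrees with the strict Gray tensor product, and then run the argument of \cref{prop:plain}. You are in fact more explicit than the paper's two-line proof about the one genuine subtlety, namely that the induction of \cref{prop:plain} must be carried out inside $L_S\Cat_\omega^\inc$ (using that the cell-attachment pushouts are genuine pushouts there and that $\otimes'$, being biclosed, preserves them) so as to simultaneously identify $A\otimes'B$ with the torsion-free complex $A\otimes B$.
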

\begin{proof}
Let $\TPar$ be such a monoidally suitable site. By \cref{cor:day-refl}, it suffices to show that the Gray tensor on $\TPar$ is uniquely determined as a monoidal structure by its restriction to $\square_\semi$, and compatibility with monic pushouts along folk cofibrations. So this follows from \cref{prop:plain}.
\end{proof}

\begin{eg}\label{eg:end}
The Gray tensor products on $\Cat_\omega$, $\Cat_\omega^\inc$, $\sCat_\omega$, $\Gaunt_\omega$, $\Cat_\omega^{\inc < R}$, $\Cat_\omega^{\inc <^\coind C}$ are all uniquely determined as monoidal biclosed structures by the stipulation that the inclusion of $\square_\semi$ be a strong monoidal functor. We do not know if the same holds when $L_S \Cat_\omega^\inc$ is $\Cat_n^\inc$, $\Cat_n$, etc. when $n$ is finite, because an analog of \cref{prop:plain} would require us to know whether $L_S(A \otimes B)$ is in $\TPar$ for $A,B \in \TPar$.
\end{eg}

\begin{rmk}
    We do not know whether the characterization in \cref{thm:gray-plain} can be extended to give a universal property for the Gray tensor product in terms of $\square_\semi$. We suspect that is this is not the case, but that it will become the case if we enlarge allow for some degeneracies in our cube category. For example, it may be that when $\calE$ is a cocomplete monoidal biclosed category, strong monoidality of cocontinuous functors $\Cat_\omega^\inc \to \calE$ is detected by restriction to the plain cube category $\square_\plain$. Such a universal property would mesh nicely with the fact that the plain cube category has a monoidal universal property: it is the free monoidal category on an augmented bipointed object. At issue is that the coherences for a strong monoidal structure on a functor out of $\square_\semi$ might not be natural with respect to all maps in $\square$: we are not sure how to resolve this even if $\calE$ is a 1-category, because although $\square_\plain$ generates $\square$ under colimits, it does not seem to be a dense generator.
\end{rmk}





\bibliographystyle{alpha}
\bibliography{gray}

\end{document}